\theoremstyle{plain}
\newtheorem{theorem}{Theorem}[section]
\newtheorem{proposition}[theorem]{Proposition}
\newtheorem{lemma}[theorem]{Lemma}
\newtheorem{corollary}[theorem]{Corollary}
\numberwithin{equation}{section}
\theoremstyle{definition}
\newtheorem{remark}[theorem]{Remark}
\newtheorem{example}[theorem]{Example}
\newtheorem{conjecture}[theorem]{Conjecture}
\newcommand{\cI}{\mathcal{I}}
\newcommand{\cF}{\mathcal{F}}
\newcommand{\cG}{\mathcal{G}}
\newcommand{\cZ}{\mathcal{Z}}
\newcommand{\cN}{\mathcal{N}}
\newcommand{\cU}{\mathcal{U}}
\newcommand{\cA}{\mathcal{A}}
\newcommand{\RP}{\mathbb{R}P}
\newcommand{\Q}{\mathbb{Q}}
\newcommand{\R}{\mathbb{R}}
\newcommand{\Z}{\mathbb{Z}}
\newcommand{\realbier}{M^\R(\Bier(K))}
\def\red#1{\textcolor{red}{#1}}
\DeclareMathOperator{\Ker}{Ker}
\DeclareMathOperator{\Tor}{Tor}
\DeclareMathOperator{\Lk}{Lk}
\DeclareMathOperator{\row}{row}
\DeclareMathOperator{\Bier}{Bier}
\begin{document}
\title[Full subcomplexes of Bier spheres]{Full subcomplexes of Bier spheres}

\author[S. Choi]{Suyoung Choi}
\address{Department of mathematics, Ajou University, 206, World cup-ro, Yeongtong-gu, Suwon 16499,  Republic of Korea}
\email{schoi@ajou.ac.kr}

\author[Y. Yoon]{Younghan Yoon}
\address{Department of mathematics, Ajou University, 206, World cup-ro, Yeongtong-gu, Suwon 16499,  Republic of Korea}
\email{younghan300@ajou.ac.kr}

\author[S. Yu]{Seonghyeon Yu}
\address{Department of mathematics, Ajou University, 206, World cup-ro, Yeongtong-gu, Suwon 16499,  Republic of Korea}
\email{yoosh0319@ajou.ac.kr}

\date{\today}
\subjclass[2020]{55U10, 57S12, 52B05, 13F55}

\keywords{Bier sphere, full subcomplex, homotopy type, polyhedral product, bigraded Betti numbers, real toric manifold}

\thanks{This work was supported by the National Research Foundation of Korea Grant funded by the Korean Government (RS-2025-00521982).}

\begin{abstract}
Full subcomplexes of a simplicial complex encode essential structure for understanding the complex itself.
For a simplicial complex~$K$, possibly with a ghost vertex, the Bier sphere of~$K$ is a simplicial sphere obtained as the deleted join of~$K$ and its combinatorial Alexander dual.
In this paper, we determine the homotopy types of all full subcomplexes of Bier spheres.
As applications, we provide a formula for the bigraded Betti numbers of the Bier sphere of $K$ in terms of full subcomplexes of $K$, and we explicitly describe the cohomology of real toric manifolds associated with Bier spheres.
\end{abstract}
\maketitle
  \tableofcontents

\section{Introduction}
A \emph{Bier sphere} was introduced by Bier in~\cite{Bier1992} as a simplicial sphere, obtained in a simple way by taking the deleted join of a simplicial complex and its Alexander dual.
Let $m$ be a positive integer.
We denote the vertex sets by $[m] = \{1,2,\ldots,m\}$ and $[\bar{m}] = \{\bar{1},\bar{2},\ldots,\bar{m}\}$. For a subset $I \subset [m]$, we denote its corresponding set in $[\bar{m}]$ by $\bar{I} = \{\bar{i} \colon i \in I\}$.
Throughout this paper, we assume that $K$ is a simplicial complex on $[m]$ that is not the entire power set $2^{[m]}$, possibly including ghost vertices.
The (combinatorial) \emph{Alexander dual}~$\hat{K}$ of $K$ is a simplicial complex on~$[\bar{m}],$ defined as
$$  
    \hat{K} = \{\bar{\sigma} \subseteq [\bar{m}]\colon [m] \setminus \sigma \notin K \}.
$$  
The \emph{Bier sphere} of $K$, denoted by $\Bier(K)$, is a simplicial complex on~$[m] \sqcup [\bar{m}]$ and is defined as  
$$  
\Bier(K) = \{\sigma \sqcup \tau \colon \sigma \in K, \tau \in \hat{K}, \bar{\sigma} \cap \tau = \emptyset\}.  
$$
Here, $\sqcup$ denotes disjoint union.
By construction, $\Bier(K)$ is a simplicial sphere of dimension~$(m-2)$ with $2m$ vertices, though some of them may be ghost vertices, depending on the structure of~$K$.

Over the last few decades, Bier spheres have been studied from various viewpoints in \cite{Matousek2003, de_Longueville2004, Bjorner-Paffenholz-Sjostrand-Ziegler2005, Murai2011,Heudtlass-Katthan2012,Alessio-Gunnar-Nematbakhsh2019,Jevtic-Timotijevic-Zivaljevic2021,Ivan_Sergeev2024,Ivan-Ales2024,Ivan-Marinko-Rade2025}.
For instance, an alternative proof of the Van-Kampen-Flores theorem using the Bier sphere of a skeleton of a simplex was given in~\cite{Matousek2003}.

The study of Bier spheres is important also in toric geometry.
Limonchenko has shown in~\cite{Ivan-Marinko-Rade2025} that every Bier sphere supports a canonical complete nonsingular fan. 
Consequently, one may associate to any Bier sphere $\Bier(K)$ a toric manifold, denoted by $M(\Bier(K))$.
On the other hand, it is known that certain Bier spheres are non-polytopal~\cite{Bjorner-Paffenholz-Sjostrand-Ziegler2005}. 
In such cases, $M(\Bier(K))$ provides an alternative proof of the fact, first established in~\cite{Suyama2014}, that the underlying simplicial complex of a toric manifold need not be polytopal.

For a simplicial complex~$\Gamma$ on a finite set~$S$ and a subset $I$ of $S$, the \emph{full subcomplex} $\Gamma\vert_I$ of a simplicial complex $\Gamma$ with respect to $I$ is defined as
$$
\Gamma\vert_I = \{\sigma \in \Gamma : \sigma \subset I\}.
$$
Since the Bier sphere $\Bier(K)$ of $K$ is a simplicial complex on $[m]\sqcup [\bar{m}]$, its full subcomplexes $\Bier(K)\vert_{I\sqcup \bar{J}}$ are determined by a choice of the subsets $I$ and $J$ of $[m]$ such as
$$
\Bier(K)\vert_{I\sqcup \bar{J}} = \{\sigma\sqcup\bar{\tau}\in \Bier(K) : \sigma \subset I, \tau \subset J\}.
$$

In this paper, we study the full subcomplexes of a Bier sphere.
The \emph{link} of a simplex $\sigma$ in a simplicial complex $\Gamma$, denoted $\operatorname{Lk}(\sigma,\Gamma)$, is the subcomplex consisting of all simplices $\tau$ disjoint from $\sigma$ such that $\sigma \cup \tau \in \Gamma$, and the \emph{suspension} of $\Gamma$, denoted $\Sigma \Gamma$, is the join of $\Gamma$ with two new isolated vertices.
We denote the $k$-fold suspension by $\Sigma^k$, and set~$\Sigma^0\Gamma = \Gamma$.
The homotopy type of full subcomplexes of a Bier sphere is determined by the main theorem stated below, where the description is given in terms of links and suspensions defined above.

\begin{theorem}~\label{thm:main}
  Let $K$ be a simplicial complex on $[m]$, and let $I,J$ be subsets of $[m]$.
  Then the full subcomplex $\Bier(K)\vert_{I\sqcup\bar{J}}$ of $\Bier(K)$ with respect to $I\sqcup\bar{J}\subset [m]\sqcup[\bar{m}]$ is homotopy equivalent to
  \begin{enumerate}
    \item\label{thm:main1} \makebox[4cm][l]{the $(|I|-1)$-sphere,} if $I=J$, $I\in K$, and $\bar{I}\in\hat{K}$,
    \item\label{thm:main2} \makebox[4cm][l]{the $(|I|-2)$-sphere,} if $I=J$, $I\notin K$, and $\bar{I}\notin\hat{K}$,
    \item\label{thm:main3}
     \makebox[4cm][l]{$\Sigma^{|J|}\Lk(J,K\vert_I)$, } if $J\subsetneq I$ and $J\in K$,
    \item\label{thm:main4} \makebox[4cm][l]{$\Sigma^{|I|}\Lk(\bar{I},\hat{K}\vert_{\bar{J}})$,} if $J\supsetneq I$ and $\bar{I}\in\hat{K}$, 
    \item\label{thm:main5} \makebox[4cm][l]{a point,} otherwise.
  \end{enumerate}
  In particular, in the case of~\eqref{thm:main1}, $\Bier(K)\vert_{I\sqcup\bar{I}}$ is the boundary of an $|I|$-dimensional cross polytope.
\end{theorem}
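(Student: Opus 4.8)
The plan is to determine the homotopy type of $L:=\Bier(K)\vert_{I\sqcup\bar J}$ (with vertex set $W:=I\sqcup\bar J$) by induction, driven by two local identities that re-express links inside a Bier sphere as Bier spheres of smaller complexes. Directly from the definitions of $\Bier$ and of the Alexander dual one checks
\[
\Lk(i,\Bier(K))=\Bier(\Lk(i,K)),\qquad \Lk(\bar i,\Bier(K))=\Bier(K\vert_{[m]\setminus\{i\}}),
\]
where the second rests on the fact that $\Lk(\bar i,\hat K)$ is the Alexander dual of the deletion $K\vert_{[m]\setminus\{i\}}$. Since links commute with passage to full subcomplexes, for a vertex $k\in I\cap J$ these descend to identifications of $\Lk(k,L)$ with $\Bier(\Lk(k,K))\vert_{(I\setminus\{k\})\sqcup\overline{J\setminus\{k\}}}$ and of $\Lk(\bar k,L)$ with $\Bier(K\vert_{[m]\setminus\{k\}})\vert_{(I\setminus\{k\})\sqcup\overline{J\setminus\{k\}}}$. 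Thus every vertex link of $L$ is again a full subcomplex of a Bier sphere, now over $[m]\setminus\{k\}$ and with the data $(I,J)$ shrunk to $(I\setminus\{k\},J\setminus\{k\})$; this is the engine of the induction.

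Next I would set up the decomposition. For any vertex $v$ of $L$ one has $L=L\vert_{W\setminus\{v\}}\cup\St(v,L)$ glued along $\Lk(v,L)$, with $\St(v,L)=v\ast\Lk(v,L)$ a cone, so $L$ is the mapping cone of $\Lk(v,L)\hookrightarrow L\vert_{W\setminus\{v\}}$. The decisive point is that for $k\in I\cap J$ the vertices $k$ and $\bar k$ never lie in a common face of $L$ (a face $\sigma\sqcup\bar\tau$ satisfies $\sigma\cap\tau=\emptyset$), whence
\[
L=L\vert_{W\setminus\{k,\bar k\}}\ \cup\ (k\ast\Lk(k,L))\ \cup\ (\bar k\ast\Lk(\bar k,L)),
\]
a copy of $L'=L\vert_{W\setminus\{k,\bar k\}}$ with two cones attached along the links, both of which sit inside $L'$. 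When one cone fills $L'$ — that is, $\Lk(\bar k,L)=L'$ (so $\bar k$ is a cone point), or symmetrically $\Lk(k,L)=L'$ — the complex becomes a mapping cone onto a contractible space, and a cofiber-sequence argument yields $L\simeq\Sigma\,\Lk(k,L)$ (resp.\ $\Sigma\,\Lk(\bar k,L)$). This is precisely the mechanism that produces one suspension per element of $I\cap J$.

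I would then run the induction on $|I|+|J|$ and distribute over the cases. If $I=J$, repeatedly peeling an antipodal pair $k,\bar k$ produces $|I|$ suspensions, bottoming out at the memberships $I\in K,\ \bar I\in\hat K$ (every pair "full", giving $S^{|I|-1}$, and on the nose the boundary of the cross polytope on $I\sqcup\bar I$) versus $I\notin K,\ \bar I\notin\hat K$ (the top pair degenerates, giving $S^{|I|-2}$); a mismatch of the two memberships lands in the contractible case. If $J\subsetneq I$ with $J\in K$, the hypothesis $J\in K$ forces $\bar k$ to be a cone point for each $k\in J$, so each step strips off a suspension while replacing $K$ by $\Lk(k,K)$; after $|J|$ steps the data collapses to $\Lk(J,K\vert_I)$, giving $\Sigma^{|J|}\Lk(J,K\vert_I)$, and the case $I\subsetneq J$ is the Alexander-dual mirror via the symmetry $K\leftrightarrow\hat K$. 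The remaining configurations reduce, once $I\cap J=\emptyset$, to the join $L=K\vert_I\ast\hat K\vert_{\bar J}$, whose contractibility in case~\eqref{thm:main5} I would extract from the fact that the downward-closed structure of $K$ together with combinatorial Alexander duality forces at least one join factor to be acyclic.

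The main obstacle is the homotopy bookkeeping in the inductive step: one must verify, case by case, exactly which of $k,\bar k$ is a cone point — so that the double cone collapses to the asserted \emph{single} suspension rather than a genuine double mapping cone — and confirm that the governing conditions $J\in K$ and $\bar I\in\hat K$ are inherited by $\Lk(k,K)$ and $K\vert_{[m]\setminus\{k\}}$, so that the induction remains within the same case. Equally delicate are the degenerate boundary phenomena: an absent pole vertex (when $\{k\}\notin K$ or $\bar k\notin\hat K$), an empty link to be read as the $(-1)$-sphere, and the one-step drop from $S^{|I|-1}$ to $S^{|I|-2}$ that separates cases~\eqref{thm:main1} and~\eqref{thm:main2}. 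Establishing the case-\eqref{thm:main5} contractibility of the join uniformly, rather than example by example, is the other ingredient requiring care.
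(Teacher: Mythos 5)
Your two link identities are correct and genuinely pretty ($\Lk(i,\Bier(K))=\Bier(\Lk(i,K))$ and $\Lk(\bar{i},\Bier(K))=\Bier(K\vert_{[m]\setminus\{i\}})$ both check out, the latter because $\Lk(\bar{i},\hat{K})$ is the Alexander dual of the deletion), and your cofiber mechanism is sound: it is exactly the paper's Lemma~\ref{lemma:delete}. But the engine of your induction fails at its central claim, namely that for $J\subsetneq I$ with $J\in K$ and $k\in J$, ``the hypothesis $J\in K$ forces $\bar{k}$ to be a cone point.'' This is false. Take $K$ on $[4]$ with facets $\{1,2\}$ and $\{3,4\}$, and $I=\{1,2,3\}$, $J=\{1,2\}\in K$, $k=1$, $L=\Bier(K)\vert_{I\sqcup\bar{J}}$. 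Here $\{\bar{2}\}\in\hat{K}$ but $\{\bar{1},\bar{2}\}\notin\hat{K}$, since $[4]\setminus\{1,2\}=\{3,4\}\in K$; hence the face $\{\bar{2}\}$ of $L'=L\vert_{\{2,3\}\sqcup\{\bar{2}\}}$ does not extend by $\bar{1}$, so $\Lk(\bar{1},L)\subsetneq L'$, and one checks $\Lk(1,L)\neq L'$ as well. Neither pole is a cone point, so your peeling step never fires --- yet the conclusion of case~\eqref{thm:main3} still holds ($L\simeq S^1$), because the deletion $L\setminus 1=\Bier(K)\vert_{\{2,3\}\sqcup\{\bar{1},\bar{2}\}}$ is the path $\bar{2}$--$3$--$\bar{1}$--$2$: contractible, but not a cone on any vertex. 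Membership $\bar{\tau}\in\hat{K}$ means $[m]\setminus\tau\notin K$, and passing to the \emph{smaller} set $[m]\setminus(\tau\cup\{k\})$ can re-enter $K$, which is exactly what kills the cone-point argument in general.

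What actually powers the induction is the statement you never prove: contractibility of $\Bier(K)\vert_{I'\sqcup\bar{J'}}$ for \emph{all} incomparable $I',J'$, not just disjoint ones (the paper's Theorem~\ref{thm:contrac}). Your proposal handles only $I'\cap J'=\emptyset$, where $L$ is an honest join and Lemma~\ref{lem:simplex} applies (note: the lemma gives that one factor is a full simplex, hence contractible --- mere acyclicity of a factor, as you write, is not what the join argument uses); but the reduction from overlapping incomparable pairs to the disjoint case cannot come from your suspension mechanism, since these complexes are contractible rather than suspensions, and as the example shows they need not admit cone points. The paper supplies this via the cover $U_\sigma = K\vert_{(I\setminus(I\cap J))\cup\sigma}\ast\hat{K}\vert_{\bar{J}\setminus\bar{\sigma}}$ indexed by $\sigma\in K\vert_{I\cap J}$ and the nerve lemma, with Lemma~\ref{lem:simplex} making every finite intersection contractible; only then does Lemma~\ref{lemma:delete} yield one suspension per vertex of $J$ in Theorem~\ref{thm:4_main}. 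The same gap infects your $I=J$ case: antipodal peeling again presupposes cone points at intermediate stages, whereas the paper avoids this entirely by completing $\Bier(K)\vert_{I\sqcup\bar{I}}$ to the genuine sphere $\Bier(\cA_{(K\vert_I,\hat{K}\vert_{\bar{I}})})$ and removing at most two non-adjacent vertices. Finally, the contractibility when $J\subsetneq I$ but $J\notin K$ (Corollary~\ref{cor:case3_contractible}) needs its own facet argument, which your sketch does not contain. In short: correct local identities and a correct suspension lemma, but the load-bearing contractibility input is missing, and the substitute you propose for it is refuted by the example above.
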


In Section~\ref{sec:AKL}, we classify all types of full subcomplexes~$\Bier(K)_{I\sqcup \bar{I}}$ in terms of the faces of~$K$, thereby resolving~\eqref{thm:main1} and~\eqref{thm:main2}.
In Section~\ref{sec3}, we provide a cover of the full subcomplexes of Bier spheres consisting of the simplicial join of full subcomplexes of $K$ and $\hat{K}$:
$$
\Bier(K)_{I\sqcup \bar{J}}=
  \bigcup_{\sigma \in K\vert_{I\cap J}} K\vert_{(I\setminus I\cap J) \cup \sigma} \ast \hat{K}\vert_{\bar{J}\setminus \bar{\sigma}}
$$
for subsets $I,J$ of $[m]$.
Using this cover, we prove that any full subcomplex~$\Bier(K)_{I \sqcup \bar{J}}$ is contractible in the case where~$I$ and~$J$
are not comparable under inclusion.
In Section~\ref{sec:inclu}, we determine the homotopy types of $\Bier(K)\vert_{I\sqcup\bar{J}}$ by the link of~$J$ in~$K\vert_I$, as in \eqref{thm:main3} and \eqref{thm:main4}, in the case~$J \subsetneq I$.
Furthermore, in this case, we also prove that~$\Bier(K)\vert_{I\sqcup \bar{J}}$ is contractible whenever~$J\notin K$.

For a simplicial complex, its full subcomplexes reveal the underlying geometric and algebraic structure of the simplicial complex itself, and they have been studied from various viewpoints in topology, combinatorics, and commutative algebra~\cite{Reisner1976,Stanley1996book,Choi-Panov-Suh2010,Choi-Kim2011,Suciu2012,Choi-Park2015,Choi-Park2017_torsion,Choi-Park2017_multiplicative,Cai-Choi2021,Choi-Jang-Vallee2024, Choi-Yoon-Yu2024}.
For a topological pair~$(X,Y)$, the \emph{polyhedral product}~$(X,Y)^\Gamma$ of a simplicial complex~$\Gamma$ on a finite set $S$ with respect to $(X,Y)$ is
defined by
$$
    (X,Y)^\Gamma = \bigcup_{\sigma \in \Gamma}\{ (x_1,\ldots,x_{|S|}) \in X^{|S|} \colon x_i \in Y \text{ whenever } i \notin \sigma\}.
$$
In particular, the \emph{moment-angle complex}~$\cZ_\Gamma$ of~$\Gamma$ is given by
$(D^2,S^1)^\Gamma$, and the \emph{real moment-angle complex}~$\R\cZ_\Gamma$ by
$(D^1,S^0)^\Gamma$.
Full subcomplexes arise naturally in the general theory of polyhedral products and appear in many structural results including the Bahri--Bendersky--Cohen--Gitler decomposition introduced in \cite{BBCG2010}
$$
    \Sigma (X,Y)^\Gamma \simeq \bigvee_{I \subseteq S} \Sigma^{|I|+1} (X,Y)^{\Gamma\vert_I}.
$$

They play an even more prominent role in the study of real and complex moment-angle complexes.
It is well known~\cite{Buchstaber-Panov2015} that the cohomology groups $H^\ast(\cZ_\Gamma)$ and $H^\ast(\R\cZ_\Gamma)$ are  completely determined by the reduced cohomology of the full subcomplexes of $\Gamma$ and can be written as 
$$
    H^{-i,2j}(\cZ_\Gamma;\Z)    \cong
    \bigoplus_{\substack{W \subseteq S \\ |W| = j}}
    \widetilde{H}^{j-i-1}(\Gamma \vert_W; \Z), \quad \text{ and } \quad 
    H^{p}(\R\cZ_\Gamma;\Z)    \cong
    \bigoplus_{W \subseteq S}
    \widetilde{H}^{p-1}(\Gamma \vert_W; \Z).
$$
Therefore, from Theorem~\ref{thm:main}, we obtian a complete description of the integral cohomology groups of $\cZ(\Bier(K))$ and $\R\cZ(\Bier(K))$.
Note that $K$ appears as a full subcomplex of its Bier sphere $\Bier(K)$, as is also seen in Theorem~\ref{thm:main}\,\eqref{thm:main3}.
If $K$ has torsion in its cohomology, then the same torsion occurs in both $\cZ(\Bier(K))$ and $\R\cZ(\Bier(K))$.
While Bosio and Meersseman~\cite[Theorem~11.11]{Bosio-Meersseman2006} established the existence of (real) moment-angle complexes with arbitrary torsion, our observation supplies a concrete realization of this phenomenon through Bier spheres.

Furthermore, the cohomology ring of $\cZ_\Gamma$ is known to be isomorphic to the $\Tor$-algebra of the Stanley--Reisner ring of~$\Gamma$.
The rank of the free part of the $\Tor$-algebra in bidegree~$(-i,2j)$ is called the \emph{bigraded Betti number} $\beta^{-i,2j}(\Gamma)$ of~$\Gamma$.
In \emph{syzygy theory}, determining even a part of a Betti table is already a difficult and significant problem~\cite{Nagel-Pitteloud1994, Eisenbud2005, Han-Kwak2015}.

Attempts to compute the bigraded Betti numbers of Bier spheres naturally followed.
Heudtlass and Katth{\"a}n~\cite{Heudtlass-Katthan2012} computed $\beta^{-1,2j}(\Bier(K))$ and $\beta^{-2,2j}(\Bier(K))$ for arbitrary $K$, as well as the bigraded Betti numbers for Bier spheres of skeletons of simplices.
Beyond this special case, however, very little has been known.
Using Theorem~\ref{thm:main}, we can express the complete Betti table of $\Bier(K)$ in terms of the structure of~$K$, as summarized below.
Further details are presented in Section~\ref{sec5}.

Define two collections of subsets of $[m]$ as
\begin{align*}
   & \cF^+_k := \{\sigma\subseteq [m] : \left\vert \sigma \right\vert = \frac{k}{2}, \sigma\in K, \bar{\sigma} \in \hat{K} \}, \mbox{ and }\\
   & \cF^-_k := \{\tau\subseteq [m] : \left\vert \tau \right\vert = \frac{k}{2}, \tau\not\in K, \bar{\tau}\not\in \hat{K}\}. 
\end{align*}

\begin{theorem}~\label{thm:main_2}
  Let $K$ be a simplicial complex on $[m]$.
  For non-negative integers $i$ and $j$, the bigraded Betti number $\beta^{-i,2j}(\Bier(K))$ of $\Bier(K)$ in bidegree~$(-i,2j)$ is given by
  \begin{align*}
                              \delta_{2i,j} \left\vert \cF^+_j\right\vert + \delta_{2i-2,j}\left\vert \cF^-_j \right\vert
                              &
                              + \sum_{J \in K\vert_I} \widetilde{\beta}^{\left\vert I \right\vert-i-1}(\Lk(J, K\vert_I))
                             &
                              + \sum_{\bar{I} \in \hat{K}\vert_{\bar{J}}} \widetilde{\beta}^{\left\vert J \right\vert-i-1}(\Lk(\bar{I}, \hat{K}\vert_{\bar{J}}))
                              ,
  \end{align*}
  where~$I$ and~$J$ are distinct subsets of~$[m]$ such that $\left\vert I \right\vert + \left\vert J \right\vert = j$ and $\delta_{i,j}$ denotes Kronecker delta.
\end{theorem}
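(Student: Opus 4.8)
The plan is to start from the standard description of the bigraded Betti numbers in terms of full subcomplexes and then feed in Theorem~\ref{thm:main}. Recall that $\beta^{-i,2j}(\Gamma)$ is the rank of the free part of $H^{-i,2j}(\cZ_\Gamma)$, so taking ranks of free parts in the formula quoted in the introduction gives
$$
\beta^{-i,2j}(\Gamma) = \sum_{\substack{W\subseteq S\\ |W|=j}} \widetilde{\beta}^{j-i-1}(\Gamma\vert_W),
$$
where $\widetilde{\beta}^{k}$ denotes the rank of the free part of $\widetilde{H}^{k}$; this is legitimate since the rank of the free part of a direct sum is the sum of the ranks. Applying this with $\Gamma=\Bier(K)$ and $S=[m]\sqcup[\bar{m}]$, every $W$ is of the form $I\sqcup\bar{J}$ for unique $I,J\subseteq[m]$, and $|W|=|I|+|J|$. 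Hence the problem reduces to computing $\widetilde{\beta}^{j-i-1}(\Bier(K)\vert_{I\sqcup\bar{J}})$ for each pair $(I,J)$ with $|I|+|J|=j$, which is exactly the content of Theorem~\ref{thm:main}.

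Next I would split the sum according to the five mutually exclusive and exhaustive cases of Theorem~\ref{thm:main}. For the two diagonal cases $I=J$, note $j=2|I|$. In case~\eqref{thm:main1} the full subcomplex is the $(|I|-1)$-sphere, whose free reduced cohomology is $\Z$ in degree $|I|-1$ and zero otherwise; this contributes $1$ precisely when $j-i-1=|I|-1$, i.e. when $2i=j$, and the pairs in question are enumerated by $\cF^+_j$, yielding the term $\delta_{2i,j}|\cF^+_j|$. Symmetrically, case~\eqref{thm:main2} gives the $(|I|-2)$-sphere, which contributes $1$ exactly when $j-i-1=|I|-2$, i.e. when $2i-2=j$, and these pairs are enumerated by $\cF^-_j$, giving $\delta_{2i-2,j}|\cF^-_j|$. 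The contractible case~\eqref{thm:main5} contributes nothing, since a point has trivial reduced cohomology.

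For the off-diagonal cases $I\neq J$ the key computation is the suspension shift $\widetilde{H}^{n}(\Sigma^{k}X)\cong\widetilde{H}^{n-k}(X)$, which preserves ranks of free parts. In case~\eqref{thm:main3}, where $J\subsetneq I$ and $J\in K$, the subcomplex is $\Sigma^{|J|}\Lk(J,K\vert_I)$, so $\widetilde{\beta}^{j-i-1}=\widetilde{\beta}^{j-i-1-|J|}(\Lk(J,K\vert_I))=\widetilde{\beta}^{|I|-i-1}(\Lk(J,K\vert_I))$ after substituting $j=|I|+|J|$; summing over all such pairs, which are exactly those with $J\in K\vert_I$ and $J\neq I$, gives the third term. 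Case~\eqref{thm:main4} is handled identically with the roles of $I$ and $J$ (and of $K$ and $\hat{K}$) interchanged, producing $\widetilde{\beta}^{|J|-i-1}(\Lk(\bar{I},\hat{K}\vert_{\bar{J}}))$ and the fourth term. Collecting the four contributions gives the claimed formula.

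The computation is essentially bookkeeping, and the main point requiring care is tracking the degree shifts so that the cohomological degree $j-i-1$ lands correctly after suspension and produces the Kronecker deltas in the diagonal cases. One must also check the small and degenerate instances to confirm that the boundary conventions for reduced cohomology of spheres are consistent with the formula: for example $I=J=\emptyset$ gives $\beta^{0,0}=1$ via $\cF^+_0=\{\emptyset\}$ (using $\emptyset\in K$ and $[m]\notin K$), and the $(-1)$-sphere arising when $|I|=1$ in case~\eqref{thm:main2} must be read as having free reduced cohomology $\Z$ in degree $-1$.
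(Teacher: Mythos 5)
Your proposal is correct and follows essentially the same route as the paper: both reduce to Hochster's formula (you derive it from the moment-angle cohomology decomposition, the paper cites it directly), then split the sum over $W = I\sqcup\bar{J}$ according to the five cases of Theorem~\ref{thm:main}, with the same degree-shift bookkeeping for the suspensions and the same identification of the diagonal contributions with $\cF^{\pm}_j$. No substantive differences.
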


For a finite simplicial complex~$\Gamma$, a subgroup~$H$ of direct summands of~$\Z/2\Z$ acting freely on~$\R\cZ_\Gamma$ induces to the quotient space~$\R\cZ_\Gamma/H$.
This quotient space is referred to as a \emph{real toric space} over~$\Gamma$, which is a topological generalization~\cite{Choi-Kaji-Theriault2017} of a real toric manifold, namely the real locus of toric manifold.
It has been shown~\cite{Suciu2012,Choi-Park2017_torsion, Cai-Choi2021} that the rational Betti numbers of a real toric space can be computed from the reduced Betti numbers of full subcomplexes of~$\Gamma$.
With the homotopy types of all full subcomplexes of Bier spheres at hand, this enables us to compute the rational Betti numbers of any real toric space over Bier spheres.

Recall that each Bier sphere~$\Bier(K)$ gives rise to a toric manifold~$M(\Bier(K))$, and hence to a real toric manifold~$\realbier$.
In Section~\ref{sec:topo}, we apply Theorem~\ref{thm:main} to $\realbier$ for its cohomology.
We collect the subsets of $[m]$ that are either contained in both $K$ and $\hat{K}$ or in neither, and denote them as follows  
\begin{align*}  
    \cI_{2k}(K) &:= \{ I \subset [m] \colon |I|=2k, I \in K, \text{ and } \bar{I} \in \hat{K} \}, \text{ and}\\    
    \cI_{2k-1}(K) &:= \{ I \subset [m] \colon |I|=2k, I \not\in K, \text{ and } \bar{I} \not\in \hat{K} \}.  
\end{align*}  
\begin{theorem}\label{main2}
    Let $K$ be a simplicial complex on~$[m]$ and $\realbier$ the real toric manifold associated with $\Bier(K)$.
    For each $0 \leq i \leq m-1$, the $i$th (rational) Betti number of~$\realbier$ is $\left\vert \cI_i(K) \right\vert$.
    Moreover, the integral cohomology of $\realbier$ is $p$-torsion free for all $p \geq 3$.
\end{theorem}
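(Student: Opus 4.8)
The plan is to combine the homotopy computation of Theorem~\ref{thm:main} with the description of the rational cohomology of a real toric space in terms of full subcomplexes indexed by the mod~$2$ row space of its characteristic matrix~$\Lambda$, following \cite{Suciu2012,Choi-Park2017_torsion,Cai-Choi2021}. First I would record the canonical characteristic function of Limonchenko's fan on $\Bier(K)$: after reducing modulo~$2$, each pair of vertices $i$ and $\bar i$ receives the \emph{same} label $\lambda(i)=\lambda(\bar i)$, namely the image $[e_i]$ of the standard basis vector $e_i$ under the projection $(\Z/2)^m \to (\Z/2)^m/\langle e_1+\cdots+e_m\rangle \cong (\Z/2)^{m-1}$ (the label being defined only for non-ghost vertices, and its antipodal integral form $\pm e_i$ matching the cross-polytope subcomplexes of Theorem~\ref{thm:main}\eqref{thm:main1}). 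The cited formula then gives
$$
H^p(\realbier;\Q) \cong \bigoplus_{\omega}\widetilde{H}^{p-1}\big(\Bier(K)\vert_{\supp(\omega^{T}\Lambda)};\Q\big),
$$
the sum running over $\omega \in (\Z/2)^{m-1}$, and I would use that the same formula holds with $\Q$ replaced by any field of characteristic $\neq 2$.

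The heart of the argument is to identify the full subcomplexes that occur. Since $\lambda(i)=\lambda(\bar i)$ modulo~$2$, every row-space vector $v=\omega^{T}\Lambda$ satisfies $v_i=v_{\bar i}$ whenever both $i,\bar i$ are genuine vertices, so its support is ``symmetric''. Writing $I_\omega := \{\, j\in[m] : \langle \omega,[e_j]\rangle =1 \,\}$, I would check two things: that $\supp(\omega^{T}\Lambda)$ and $I_\omega\sqcup\bar I_\omega$ differ only in ghost vertices, whence $\Bier(K)\vert_{\supp(\omega^{T}\Lambda)}=\Bier(K)\vert_{I_\omega\sqcup\bar I_\omega}$ because a ghost vertex lies in no face; and that $\omega\mapsto I_\omega$ is a bijection from $(\Z/2)^{m-1}$ onto the even-cardinality subsets of $[m]$. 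The second point follows because the $[e_j]$ span $(\Z/2)^{m-1}$ subject to the single relation $\sum_j[e_j]=0$, so $\omega \mapsto (\langle\omega,[e_j]\rangle)_{j}$ is an isomorphism onto the even-weight vectors; this also forces $\omega\mapsto v$ to be injective, so each even subset is counted exactly once. The main obstacle I expect is precisely this bookkeeping: verifying that Limonchenko's characteristic function has the stated mod~$2$ form, and controlling the ghost vertices so that the support of a row-space vector always ``completes'' to a symmetric set $I_\omega\sqcup\bar I_\omega$ of even cardinality.

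With the indexing set identified, the computation reduces to Theorem~\ref{thm:main} in the balanced case $I=J$. Here only cases \eqref{thm:main1}, \eqref{thm:main2}, and \eqref{thm:main5} can occur, since cases \eqref{thm:main3} and \eqref{thm:main4} require a strict inclusion: case \eqref{thm:main5} yields a point and contributes nothing; case \eqref{thm:main1} (when $I\in K$ and $\bar I\in\hat{K}$) gives a sphere $S^{|I|-1}$, contributing a single class in degree $p=|I|$; and case \eqref{thm:main2} (when $I\notin K$ and $\bar I\notin\hat{K}$) gives $S^{|I|-2}$, contributing a single class in degree $p=|I|-1$. Summing over even subsets $I$ and comparing with the definitions of $\cI_{2k}(K)$ and $\cI_{2k-1}(K)$, the degree-$2k$ classes are counted by $\cI_{2k}(K)$ and the degree-$(2k-1)$ classes by $\cI_{2k-1}(K)$, which gives $\dim_\Q H^i(\realbier;\Q)=|\cI_i(K)|$ for every $i$.

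Finally, for the torsion statement I would run the identical computation over $\F_p$ for an odd prime $p$, using the characteristic-$\neq 2$ version of the full-subcomplex formula. Because every full subcomplex appearing is an $\F_p$-homology sphere or a point, its $\F_p$-Betti numbers agree with its rational ones, so $\dim_{\F_p}H^i(\realbier;\F_p)=|\cI_i(K)|=\dim_\Q H^i(\realbier;\Q)$ in every degree. By the universal coefficient theorem this equality forces $H^\ast(\realbier;\Z)$ to have no $p$-torsion, and letting $p$ range over all odd primes yields the claim; the $2$-primary part is left unconstrained, as expected for real toric manifolds.
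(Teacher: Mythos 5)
Your proposal is correct and follows essentially the same route as the paper: identify the row space of the canonical characteristic matrix $\Lambda_m$ with the even-cardinality subsets $I\subset[m]$, observe that the corresponding full subcomplexes are exactly the symmetric ones $\Bier(K)\vert_{I\sqcup\bar I}$, and apply the $I=J$ classification (Theorem~\ref{sec3:thm}) so that only the sphere cases contribute, yielding $\beta^i=|\cI_i(K)|$. The only (immaterial) difference is in the torsion statement: the paper invokes Theorem~\ref{CaiChoi}\,(1) directly, noting that $\bigoplus_\omega\widetilde{H}^{i-1}(\Bier(K)_\omega;\Z)$ is free abelian because every summand is a sphere or a point, whereas you reach the same conclusion by comparing $\F_p$- and $\Q$-Betti numbers and applying universal coefficients.
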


Additionally, in Section~\ref{sec:topo}, we also discuss the multiplicative structure of the cohomology ring of~$\realbier$ based on~\cite{Choi-Park2017_multiplicative}.

\section{Preliminaries}
\subsection{Bier spheres}
Let $S$ be a finite set.
A finite collection $\Gamma$ of subsets of $S$ is an (abstract) \emph{simplicial complex} on $S$ if, for each $\sigma \in \Gamma$, every subset of $\sigma$ is also an element of $\Gamma$.
For each $1 \leq \ell \leq \left\vert S \right\vert$, an element $\{i_1,\ldots,i_\ell\} \subset S$ of $\Gamma$ is called a \emph{simplex} or a \emph{face} of $\Gamma$ of dimension~$\ell-1$.
Each one-point set $\{v\} \subset S$ is called a \emph{vertex} of $\Gamma$ and will simply be denoted by $v$.
If $v \in S$ is not a face of~$\Gamma$, then it is called a \emph{ghost vertex} of~$\Gamma$.
A maximal face of~$\Gamma$ under inclusion is called a \emph{facet} of~$\Gamma$.
The dimension $\dim \Gamma$ of $\Gamma$ is the largest dimension among its facets.
A simplicial complex $\Gamma$ is said to be \emph{pure}, if all its facets have the same dimension.
A subcollection~$\Gamma' \subset \Gamma$ that is itself a simplicial complex is called a \emph{subcomplex} of $\Gamma$.
For any subset $I \subset S$, the \emph{full subcomplex} $\Gamma\vert_I$ of $\Gamma$ with respect to $I$ is given by
$$
    \Gamma\vert_I= \{\sigma \in \Gamma \colon \sigma \subseteq I\}.
$$
For each pair of simplicial complexes $\Gamma_1$ and $\Gamma_2$, we say that $\Gamma_1$ is \emph{(simplicial) isomorphic} to~$\Gamma_2$, denoted by $\Gamma_1 \cong \Gamma_2$, if there exists a bijection between their vertex sets that induces a bijection on their simplices.

For simplicial complexes $\Gamma_1$ on a vertex set $S_1$ and $\Gamma_2$ on a vertex set $S_2$, the \emph{simplicial join}~$\Gamma_1 \ast \Gamma_2$ of~$\Gamma_1$ and~$\Gamma_2$ is the simplicial complex on~$S_1\cup S_2$, defined as
$$
    \Gamma_1\ast \Gamma_2 = \{\sigma\cup \tau : \sigma \in \Gamma_1, \tau \in \Gamma_2\}.
$$

For any set~$I$ of vertices, we denote its corresponding vertex set by $\bar{I} = \{\bar{i} \colon i \in I\}$.
For a simplicial complex~$\Gamma$ on a finite set~$S$, the (combinatorial) \emph{Alexander dual} $\hat{\Gamma}$ of $\Gamma$ is defined as a simplicial complex on $\bar{S}$ given by
$$
  \hat{\Gamma} = \{\bar{\sigma} \subseteq \bar{S} \colon \sigma \subseteq S \text{ and } S \setminus \sigma \notin \Gamma\}.
$$ 
One can see that the Alexander dual of $\hat{\Gamma}$ is $\Gamma$ itself, as we naturally regard $\bar{\bar{S}} = S$.
Given simplicial complexes $\Gamma_1$ on $S$ and $\Gamma_2$ on $\bar{S}$, the \emph{deleted join} of $\Gamma_1$ and $\Gamma_2$, denoted by $\Gamma_1 \ast_{\Delta} \Gamma_2$, is defined as
$$
\Gamma_1 \ast_{\Delta} \Gamma_2=\{\sigma \sqcup \tau \mid \sigma \in \Gamma_1, \tau \in \Gamma_2, \bar{\sigma} \cap \tau = \emptyset\}.
$$

Let us denote $[m] = \{1,\ldots,m\}$ and $[\bar{m}] = \{\bar{1},\ldots,\bar{m}\}$.
For a simplicial complex~$K$ on~$[m]$, the \emph{Bier sphere} of $K$, denoted by $\Bier(K)$, is the simplicial complex on $[m] \sqcup [\bar{m}]$ defined as the deleted join~$K \ast_{\Delta} \hat{K}$ of~$K$ and~$\hat{K}$, introduced in~\cite{Bier1992}.
As an example of a Bier sphere, see Figure~~\ref{sec2:figure1}.
Moreover, it can be immediately obtained that every Bier sphere of a simplicial complex on~$[m]$ is an $(m-2)$-dimensional simplicial sphere.
According to~\cite{Bjorner-Paffenholz-Sjostrand-Ziegler2005}, every Bier sphere is shellable, but in general, it is not necessarily polytopal. 
\begin{figure}
  \begin{tikzpicture}
        \fill[fill=gray!50] (-3,0)--(-4,-2)--(-2,-2);
        \node[left] at (-4,1) {\large{$K$}};
        \foreach \x/\y/\num in {-3/0/1, -2/-2/2, -4/-2/3} {
            \fill (\x, \y) circle (1.5pt);
        }
        \fill (-1,0) circle (1.5pt);
        \node at (-3,0.3) {1};
        \node at (-4,-2.3) {2};
        \node at (-2,-2.3) {3};
        \node at (-1,0.3) {4};
        
        \draw (-3,0) -- (-4,-2);
        \draw (-3,0) -- (-2,-2);
        \draw (-2,-2) -- (-4,-2);

        \node[left] at (1,1) {\large{$\hat{K}$}};
        \foreach \x/\y/\num in {2/0/1, 3/-2/3, 1/-2/4} {
            \fill (\x, \y) circle (1.5pt);
        }
        \draw (4,0) circle (1.5pt);
        \node at (2,0.3) {$\bar{1}$};
        \node at (1,-2.3) {$\bar{2}$};
        \node at (3,-2.3) {$\bar{3}$};
        \node at (4,0.3) {$\bar{4}$};

        \draw (2,0) -- (3,-2);
        \draw (2,0) -- (1,-2);
        \draw (1,-2) -- (3,-2);
        
        \node[left] at (6.6,1) {\large{$\Bier(K)$}};
        \filldraw[gray!20] (7.5, 0.5) -- (6.25, -1) -- (8, -1) -- cycle;
    \filldraw[gray!20] (7.5, 0.5) -- (8, -1) -- (8.75, -0.5) -- cycle;
    \filldraw[gray!20] (7.5, -2) -- (6.25, -1) -- (8, -1) -- cycle;
    \filldraw[gray!20] (7.5, -2) -- (8, -1) -- (8.5, -1.5) -- cycle;
    \filldraw[gray!20] (8.75, -0.5) -- (8, -1) -- (8.5, -1.5) -- cycle;
    \filldraw[gray!20] (7.5, 0.5) -- (6.25, -1) -- (7, -0.5) -- cycle;
    \filldraw[gray!20] (7.5, 0.5) -- (8.75, -0.5) -- (7, -0.5) -- cycle;
    \filldraw[gray!20] (7.5, -2) -- (7, -0.5) -- (6.25, -1) -- cycle;
    \filldraw[gray!20] (7.5, -2) -- (7, -0.5) -- (8.75, -0.5) -- cycle;
        \fill (7, -0.5) circle (1.5pt);
        \fill (6.25, -1) circle (1.5pt);
        \fill (8, -1) circle (1.5pt);
        \fill (8.75, -0.5) circle (1.5pt);
        \fill (7.5, 0.5) circle (1.5pt);
        \fill (7.5, -2) circle (1.5pt);
        \fill (8.5, -1.5) circle (1.5pt);
        \draw (6.5, 0) circle (1.5pt);

        \node at (7.5, 0.8) {1};
        \node at (7.2,-0.3) {2};
        \node at (6,-1) {3};
        \node at (8.7, -1.6) {4};
        \node at (7.5,-2.3) {$\bar{1}$};
        \node at (8.1, -0.7) {$\bar{2}$};
        \node at (8.9,-0.3) {$\bar{3}$};
        \node at (6.5, 0.3) {$\bar{4}$};

        \draw[dotted] (7.5, 0.5) -- (7, -0.5);
        \draw (7.5, 0.5) -- (6.25, -1);
        \draw (7.5, 0.5) -- (8,-1);
        \draw (7.5, 0.5) -- (8.75, -0.5);
        
        \draw[dotted] (7, -0.5) -- (6.25, -1);
        \draw (6.25, -1) -- (8, -1);
        \draw (8, -1) -- (8.75, -0.5);
        \draw[dotted] (8.75, -0.5) -- (7, -0.5);
        
        \draw[dotted] (7.5, -2) -- (7, -0.5);
        \draw (7.5, -2) -- (6.25, -1);
        \draw (7.5, -2) -- (8,-1);
        \draw[dotted] (7.5, -2) -- (8.75, -0.5);
        
        \draw (8.5, -1.5) -- (8.75, -0.5);
        \draw (8.5, -1.5) -- (8, -1);
        \draw (8.5, -1.5) -- (7.5, -2);
    \end{tikzpicture}
    \caption{A Bier sphere}
    \label{sec2:figure1}
    \end{figure}

For a simplicial complex $K$ on $[m]$, since the Bier sphere $\Bier(K)$ of $K$ is the simplicial complex on $[m]\sqcup[\bar{m}]$, its full subcomplex~$\Bier(K)\vert_{I\sqcup\bar{J}}$ is depending on the choice of the subsets~$I$ and~$J$ of~$[m]$.
Note that the full subcomplex~$\Bier(K)\vert_{I \sqcup \bar{J}}$ can be expressed by the deleted join of~$K\vert_{I}$ and~$\hat{K}\vert_{\bar{J}}$, namely
\begin{equation*}\label{eq:fullsubBier}
  \Bier(K)\vert_{I \sqcup \bar{J}} = K\vert_{I} \ast_{\Delta} \hat{K}\vert_{\bar{J}}.
\end{equation*}

\subsection{Real toric spaces}
Let $\Gamma$ be a simplicial complex on~$[m]$.
The \emph{real moment-angle complex}~$\R\cZ_{\Gamma}$ of $\Gamma$ is defined as
$$
\R\cZ_{\Gamma} = \bigcup_{\sigma \in \Gamma}\bigl\{ (x_1,\ldots,x_m) \in (D^1)^m \colon x_i \in S^0 \text{ if } i \notin \sigma \bigl\},
$$
where $D^1 = [-1,1]$ is a closed interval and $S^0 = \{-1,1\}$ is the boundary of $D^1$.
Given a linear map~$\lambda \colon (\Z/2\Z)^m \to (\Z/2\Z)^n$, where $n \leq m$, the sign action of $(\Z/2\Z)^m$ on $(D^1)^m$ induces an action of $\ker \lambda \subset (\Z/2\Z)^m$ on~$\R\cZ_{\Gamma}$.
The \emph{real toric space}~$M^\R(\Gamma,\lambda)$ associated with~$\Gamma$ and~$\lambda$ is defined as the quotient space
$$
M^\R(\Gamma,\lambda) = \R\cZ_{\Gamma}/\ker \lambda.
$$
It is known~\cite{Choi-Kaji-Theriault2017} that the action of $\ker\lambda$ on $\R\cZ_{\Gamma}$ is free if and only if the images
$$
\lambda(e_{i_1}),\ldots,\lambda(e_{i_r})
$$
are linearly independent in $(\Z/2\Z)^n$ for each simplex~$\{i_1,\ldots,i_r\}$ of~$\Gamma$, where $e_i$ is the $i$th standard vector of $(\Z/2\Z)^m$.
In this case, the linear map~$\lambda$ is called a (mod $2$) \emph{characteristic map} over $\Gamma$.
Note that $\dim \Gamma \leq n-1$.
We set $\dim \Gamma = n-1$.

Let $M^\R$ be a real toric space associated with a pair of a simplicial complex~$\Gamma$ on $[m]$ and a (mod $2$) characteristic map~$\lambda \colon (\Z/2\Z)^m \to (\Z/2\Z)^n$ over~$\Gamma$.
If $\Gamma$ is polytopal, then the associated real toric space~$M^\R$ is known as a \emph{small cover}~\cite{Davis-Januszkiewicz1991}, and if $\Gamma$ is star-shaped, then $M^\R$ is called a \emph{real topological toric manifold}~\cite{Ishida-Fukukawa-Masuda2013}.
A (mod $2$) characteristic map~$\lambda$ can be regarded as a (mod $2$) $n \times m$ matrix~$\Lambda$, called a (mod $2$) \emph{characteristic matrix}, where each column is $\lambda(v)$ for~$v \in (\Z/2\Z)^m$.
We denote by~$\row \Lambda$ the row space of this matrix.
Each element~$\omega \in \row \Lambda$ can be identified with a subset of $[m]$ via the standard correlation between the power set of $[m]$ and $(\Z/2\Z)^m$.
Then, $\omega$ corresponds to the full subcomplex~$\Gamma_\omega$ of $\Gamma$ with respect to~$\omega$.

In this paper, for a topological space~$X$, we denote the ordinary cohomology of~$X$ with coefficients in a ring~$R$ by~$H^{\ast}(X;R)$, and the reduced cohomology by~$\widetilde{H}^{\ast}(X;R)$.
For simplicity, when $R = \Q$, we occasionally omit the coefficient notation if there is no confusion.
Note that the formula of the cohomology ring of $M^\R$ with the $\Z/2\Z$ coefficient was established in \cite{Jurkiewicz1985, Davis-Januszkiewicz1991}. 
The cohomology of $M^\R$ with other coefficients can be obtained from the pair~$(\Gamma,\Lambda)$ as stated in the following two theorems.

\begin{theorem}\cite{Suciu2012, Choi-Park2017_torsion, Cai-Choi2021}\label{CaiChoi}
    For the integral cohomology group~$H^\ast(M^\R;\Z)$ of~$M^\R$,
    \begin{enumerate}
      \item the free part (respectively, for an odd prime $p$, the $p^k$-torsion part) of $H^i(M^\R;\Z)$ coincides with that of $\underset{\omega \in \row\Lambda}{\bigoplus}\widetilde{H}^{i-1}(\Gamma_\omega;\Z)$.
      \item If $\Gamma$ is shellable and pure, then the $2^{k+1}$-torsion part of $H^i(M^\R;\Z)$ coincides with the $2^{k}$-torsion part of~$\underset{\omega \in \row\Lambda}{\bigoplus}\widetilde{H}^{i-1}(\Gamma_\omega;\Z)$.
    \end{enumerate}
\end{theorem}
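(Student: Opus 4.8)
The plan is to analyse the free covering $q\colon \R\cZ_{\Gamma}\to M^\R$ whose deck group is $H=\ker\lambda$, an elementary abelian $2$-group of rank $m-n$ by the characteristic map hypothesis, and to feed it into the Buchstaber--Panov decomposition
$$
H^\ast(\R\cZ_{\Gamma};\Z)\cong\bigoplus_{W\subseteq[m]}\widetilde{H}^{\ast-1}(\Gamma_W;\Z).
$$
The first point I would record is that this splitting is equivariant for the coordinatewise sign action of $(\Z/2\Z)^m$ on $\R\cZ_{\Gamma}$: the group acts on the $W$-summand through the character $\chi_W\colon(\Z/2\Z)^m\to\{\pm1\}$, $\chi_W(v)=(-1)^{\langle v,W\rangle}$, where $W$ is identified with its indicator vector. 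This is read off from the standard cellular model of $\R\cZ_{\Gamma}$, in which flipping the $i$-th coordinate negates the $W$-summand exactly when $i\in W$.

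For the first assertion I would localise away from $2$. Since $|H|=2^{m-n}$ is a unit in $\Z_{(p)}$ for $p=0$ and for every odd prime $p$, the transfer for the free covering $q$ yields an isomorphism of $\Z_{(p)}$-modules $H^\ast(M^\R;\Z_{(p)})\cong H^\ast(\R\cZ_{\Gamma};\Z_{(p)})^{H}$. By the equivariance above, the $H$-invariants of the $W$-summand are all of it when $\chi_W|_{H}$ is trivial and $0$ otherwise; and $\chi_W|_{\ker\lambda}$ is trivial precisely when $W$ lies in the annihilator $(\ker\lambda)^{\perp}=\row\Lambda$. Hence
$$
H^\ast(M^\R;\Z_{(p)})\cong\bigoplus_{\omega\in\row\Lambda}\widetilde{H}^{\ast-1}(\Gamma_\omega;\Z_{(p)})
$$
as $\Z_{(p)}$-modules. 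Because localisation at $p$ preserves the free part (the case $p=0$) and the entire $p$-primary torsion (the odd primes) exactly, this records not merely ranks but the full $p^k$-torsion, which is the first assertion.

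For the second assertion the transfer argument fails at $p=2$, so I would instead work with an explicit integral cochain model for $M^\R$ of the kind developed in the cited works. In it the cellular cochain complex of $M^\R$ splits additively, over $\omega\in\row\Lambda$, into the simplicial cochain complex of $\Gamma_\omega$ twisted by the orientation signs forced by the $(\Z/2\Z)$-factors of $H$. Away from $2$ these signs are invisible and one recovers the first assertion; at $2$ they perturb the differential, and the crux is to show that this perturbation sends each $\Z/2^{k}$ summand of $\bigoplus_{\omega}\widetilde{H}^{\ast-1}(\Gamma_\omega)$ to a $\Z/2^{k+1}$ summand of $H^\ast(M^\R;\Z)$ while creating no new torsion. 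This is exactly where shellability and purity of $\Gamma$ are used: a shelling provides a filtration of $\Gamma$ whose successive quotients carry torsion-free reduced homology, so that the extension problems comparing the twisted and untwisted complexes degenerate in a controlled way and the $2$-adic shift is pinned to a single power.

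The step I expect to be the main obstacle is precisely this last one, quantifying the effect of the orientation twist on $2$-primary torsion. The localisation argument for the first assertion is formal, but there is no a priori reason for the $2$-adic valuation to jump by exactly one unit; controlling it genuinely requires threading the sign-twist through a shelling and exploiting the torsion-freeness of the link data that purity guarantees, and this is where the bulk of the work lies.
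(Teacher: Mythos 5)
The paper does not actually prove this theorem; it is imported with citations to \cite{Suciu2012, Choi-Park2017_torsion, Cai-Choi2021}, so there is no internal proof to compare against. Judged on its own merits, your argument for part~(1) is correct and is essentially the argument of the cited sources: the transfer for the free $\ker\lambda$-covering $\R\cZ_{\Gamma}\to M^\R$ identifies $H^\ast(M^\R;\Z_{(p)})$ with the $\ker\lambda$-invariants whenever $p$ is odd or $p=0$; the sign-equivariance of the Buchstaber--Panov splitting (the $W$-summand transforming by the character $\chi_W$) is exactly what the cellular cochain model of $\R\cZ_\Gamma$ provides; and since $2$ is invertible in $\Z_{(p)}$, the invariants of a summand twisted by a nontrivial character vanish, leaving precisely the summands with $W\in(\ker\lambda)^{\perp}=\row\Lambda$. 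Localization then recovers the free rank and the full odd-primary torsion. This part is sound.

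Part~(2), however, is not proved: you correctly identify that the entire content of the statement is the exact shift of the $2$-adic valuation by one, and you then explicitly defer that step. Moreover, the mechanism you gesture at does not obviously close the gap. Shellability and purity are hypotheses on $\Gamma$ itself, whereas the $2^k$-torsion in question lives in $\widetilde{H}^{\ast-1}(\Gamma_\omega;\Z)$ for the full subcomplexes $\Gamma_\omega$, which need not be shellable and may carry arbitrary torsion; a shelling filtration of $\Gamma$ with torsion-free successive quotients therefore does not by itself control the extension problems for the sign-twisted cochain complexes of the various $\Gamma_\omega$, nor does it explain why no new $2$-torsion is created and why every $\Z/2^{k}$ summand lifts to $\Z/2^{k+1}$ rather than to some higher or unchanged power. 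Pinning down this doubling is the substance of \cite{Cai-Choi2021} and requires a genuinely delicate integral cochain-level analysis; as written, your proposal establishes part~(1) but leaves part~(2) as a genuine gap.
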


\begin{theorem}\cite{Choi-Park2017_multiplicative}\label{ChoiPark}
  There is a graded $\Q$-algebra isomorphism
$$
  H^{\ast}(M^\R;\Q) \cong \underset{\omega \in \row\Lambda}\bigoplus \widetilde{H}^{\ast-1}(\Gamma_\omega;\Q).
$$
The multiplicative structure on $\underset{\omega \in \row\Lambda}\bigoplus \widetilde{H}^{\ast}(\Gamma_\omega;\Q)$ is defined via the canonical maps
$$
  \widetilde{H}^{k-1}(\Gamma_{\omega_1}) \otimes \widetilde{H}^{\ell-1}(\Gamma_{\omega_2}) \to \widetilde{H}^{k+\ell-1}(\Gamma_{\omega_1+\omega_2})
$$
induced by simplicial maps $\Gamma_{\omega_1+\omega_2} \to \Gamma_{\omega_1} \ast \Gamma_{\omega_2}$, where $\ast$ denotes the simplicial join.
\end{theorem}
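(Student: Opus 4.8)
The plan is to prove the more general statement for an arbitrary real toric space $M^\R = M^\R(\Gamma,\lambda)$ by realizing it as a free quotient of its real moment-angle complex and transporting the ring structure of $\R\cZ_\Gamma$ across the resulting transfer isomorphism; the case $\Gamma = \Bier(K)$ (with $\lambda$ the mod $2$ reduction of Limonchenko's characteristic map) is then a special instance giving $H^\ast(\realbier;\Q)$. Since $\lambda$ is a characteristic map, the group $K := \ker\lambda \subseteq (\Z/2\Z)^m$ acts freely on $\R\cZ_\Gamma$, so $\pi\colon \R\cZ_\Gamma \to M^\R$ is a regular covering with deck group $K$. As $|K|$ is a power of $2$ and hence invertible in $\Q$, the averaging/transfer argument shows that $\pi^\ast$ is an injective homomorphism of graded $\Q$-algebras whose image is the invariant subring, so that
$$
  H^\ast(M^\R;\Q) \cong H^\ast(\R\cZ_\Gamma;\Q)^{K}
$$
as graded $\Q$-algebras. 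The whole statement thus reduces to computing the $K$-invariants additively and identifying the restricted multiplication.

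For the additive part I would use the $(\Z/2\Z)^m$-equivariant refinement of the Buchstaber--Panov splitting $H^\ast(\R\cZ_\Gamma;\Q) \cong \bigoplus_{\omega \subseteq [m]} \widetilde{H}^{\ast-1}(\Gamma_\omega;\Q)$, in which, identifying each $\omega$ with its $\F_2$-vector, the standard generator $e_i$ acts on the $\omega$-summand by the sign $(-1)^{|\{i\}\cap\omega|}$, so that $g \in (\Z/2\Z)^m$ acts through the character $\omega \mapsto (-1)^{|g\cap\omega|}$. A summand is therefore $K$-fixed exactly when $\omega$ annihilates $\ker\lambda$, i.e. when $\omega \in (\ker\lambda)^{\perp}$, and by rank-nullity over $\F_2$ this orthogonal complement is precisely $\row\Lambda$, each element of which is identified with a subset of $[m]$ as in the preliminaries. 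Taking invariants yields the additive isomorphism $H^\ast(M^\R;\Q)\cong\bigoplus_{\omega\in\row\Lambda}\widetilde H^{\ast-1}(\Gamma_\omega;\Q)$.

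The heart of the argument is the multiplicative identification, which amounts to describing the cup product on $H^\ast(\R\cZ_\Gamma;\Q)$ under this splitting. I would work with the cellular cochain complex of $\R\cZ_\Gamma \subseteq (D^1)^m$, whose cells are indexed by a face $\sigma\in\Gamma$ together with a sign vector on the complementary coordinates; this gives a natural identification of the cochains with $\bigoplus_\omega \widetilde C^{\ast-1}(\Gamma_\omega)$. Choosing a cellular diagonal approximation of Alexander--Whitney type adapted to the product cell structure of $(D^1)^m$ and chasing it through this identification, the product of a class in $\widetilde H^{k-1}(\Gamma_{\omega_1})$ with one in $\widetilde H^{\ell-1}(\Gamma_{\omega_2})$ lands in $\widetilde H^{k+\ell-1}(\Gamma_{\omega_1\triangle\omega_2})$ and is induced by the simplicial folding map
$$
  \Gamma_{\omega_1\triangle\omega_2} \longrightarrow \Gamma_{\omega_1}\ast\Gamma_{\omega_2}
$$
sending each vertex of $\omega_1\setminus\omega_2$ to its copy in the first factor and each vertex of $\omega_2\setminus\omega_1$ to its copy in the second, where the tensor factor $\widetilde H^{k-1}(\Gamma_{\omega_1})\otimes\widetilde H^{\ell-1}(\Gamma_{\omega_2})$ is located inside $\widetilde H^{k+\ell-1}(\Gamma_{\omega_1}\ast\Gamma_{\omega_2})$ via the join formula. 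Here the index is the symmetric difference $\omega_1\triangle\omega_2$, which coincides with the $\F_2$-sum $\omega_1+\omega_2$; in particular the product need not vanish on overlapping supports, in contrast with the complex moment-angle case.

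Finally I would restrict this product to the invariant summands. Since $\row\Lambda$ is an $\F_2$-subspace of $(\Z/2\Z)^m$ it is closed under symmetric difference, so the invariants form a subring and the folding-map product of $\omega_1,\omega_2\in\row\Lambda$ again lands in the summand indexed by $\omega_1+\omega_2\in\row\Lambda$; this is exactly the multiplication in the statement, completing the identification of $H^\ast(M^\R;\Q)$, and hence of $H^\ast(\realbier;\Q)$, as a graded $\Q$-algebra. The main obstacle is the chain-level computation in the previous paragraph: one must produce a diagonal approximation simultaneously compatible with the product cell structure of $(D^1)^m$ and with the summand decomposition, and then verify that the induced pairing is precisely the folding map---tracking signs and the degree shift $\ast\mapsto\ast-1$, and confirming that the genuinely new nonzero overlap contributions are captured correctly.
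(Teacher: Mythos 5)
You should first note that the paper itself contains no proof of this statement: it is imported verbatim from \cite{Choi-Park2017_multiplicative} as background, so the only meaningful comparison is with the original Choi--Park argument. Measured against that, your plan is essentially the standard (and original) route: pass to the regular cover $\pi\colon \R\cZ_\Gamma \to M^\R$ with deck group $\ker\lambda$, use the transfer over $\Q$ (order of the deck group a power of $2$, hence invertible) to identify $H^\ast(M^\R;\Q)$ with the invariant subring of $H^\ast(\R\cZ_\Gamma;\Q)$, decompose the latter into isotypic pieces for the characters $g \mapsto (-1)^{\vert g \cap \omega\vert}$ of $(\Z/2\Z)^m$, and observe that $(\ker\lambda)^{\perp} = \row\Lambda$ over $\F_2$. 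The additive half of your argument is complete and correct, and your remark that the product does not vanish on overlapping supports (the index being $\omega_1 \triangle \omega_2 = \omega_1 + \omega_2$, in contrast with the complex moment-angle case) is exactly the right point of emphasis.

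The one genuine gap is the step you flag yourself: the chain-level identification of the cup product with the folding maps $\Gamma_{\omega_1+\omega_2} \to \Gamma_{\omega_1} \ast \Gamma_{\omega_2}$ is announced as a plan rather than executed, and it is the entire content of the theorem beyond the additive statement. Two observations would close it. First, the grading claim --- that the product of the $\omega_1$- and $\omega_2$-summands lands in the $(\omega_1 \triangle \omega_2)$-summand --- needs no diagonal approximation at all: cup product is natural for the $(\Z/2\Z)^m$-action by homeomorphisms, so products of isotypic classes are isotypic for the product character; your proposed cochain chase is only needed to identify the component map, not to locate it. Second, for that identification you do not need to ``produce'' a compatible diagonal: $(D^1)^m$ is a cubical complex and carries the canonical Serre (cubical) diagonal, whose terms involve only faces of a given cell and therefore restrict to the subcomplex $\R\cZ_\Gamma$; chasing it through the sign/eigenbasis decomposition, with the join K\"unneth isomorphism over $\Q$, is precisely Li Cai's computation of the ring structure of real moment-angle complexes, which is also the input behind the Choi--Park proof. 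Citing that result, or reproducing its sign bookkeeping in full, is what separates your proposal from a proof; as written, the assertion that ``the induced pairing is precisely the folding map'' is the conclusion, not an argument for it.
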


For each integer~$m \geq 2$, let $\Lambda_{m}$ be the (mod $2$) $(m-1) \times 2m$ matrix defined as
\begin{equation}\label{matrix}
  \Lambda_{m} =
\left(
\begin{array}{ccccccccc}
1 & \cdots & m-1 & m & \bar{1} & \cdots & \overline{m-1} & \bar{m}\\
\hline
1 & \cdots & 0 & 1 & 1 & \cdots & 0 & 1\\
\vdots & \ddots & \vdots & 1 & \vdots & \ddots & \vdots & 1\\
0 & \cdots & 1 & 1 & 0 & \cdots & 1 & 1\\
\end{array}
\right),
\end{equation}
where the first row indicates the labels of the vertices.
In other words, for each $1 \leq i \leq m$, the columns of $\Lambda_{m}$ indexed by $i$ and $\bar{i}$ both  equal $e_i$, where $e_i$ is the $i$th standard vector of~$(\Z/2\Z)^{m-1}$ and $e_{m} = e_1 + \cdots + e_{m-1}$.
For any simplicial complex~$\Gamma$ on~$[m]\sqcup [\bar{m}]$, one can see that~$\Lambda_m$ is a characteristic matrix over~$\Gamma$ if and only if the vertices~$i$ and $\bar{i}$ are not adjacent in~$\Gamma$ for each~$1 \leq i \leq m$.
Therefore, for any simplicial complex~$K$ on~$[m]$, $\Lambda_m$ forms a characteristic matrix over~$\Bier(K)$.
Refer~\cite{Ivan_Sergeev2024} for more details.
Since the characteristic matrix~$\Lambda_m$ induces a smooth and complete fan~\cite{Ivan-Marinko-Rade2025}, the corresponding real toric space~$\realbier := M^\R(\Bier(K),\Lambda_m)$ is a real toric manifold.

Let $\omega_i$ be the element of the row space~$\row \Lambda_m$ of~$\Lambda_m$ corresponding to the $i$th row of~$\Lambda_m$ for~$1 \leq i \leq m-1$.
Then, each element~$\omega = \omega_{i_1} +\cdots + \omega_{i_r}$ of~$\row \Lambda_m$ can be regarded as a subset~$I_\omega \subset [m]$ that has an even cardinality as follows
$$
I_\omega=\begin{cases}
           \{i_1,\ldots,i_{r}\}, & \text{if $r$ is even}, \\
           \{i_1,\ldots,i_{r},m\}, & \text{if $r$ is odd}.
         \end{cases}
$$
Conversely, for each even subset $I$ of $[m]$, there is an element $\omega \in \row \Lambda_m$ corresponding to~$I$.
Therefore, an element $\omega \in \row\Lambda$ corresponds to an even subset $I_\omega$ of $[m]$.
Since the $i$th and $\bar{i}$th columns are equal in \eqref{matrix}, one can check that the full subcomplexes appeared in Theorem~\ref{CaiChoi} are expressed as 
\begin{equation} \label{eq:row-2subset}
    \Bier(K)_\omega = \Bier(K)\vert_{I_{\omega} \sqcup \bar{I}_{\omega}}.
\end{equation}

On the other hand, the summation of two elements~$\omega$ and $\omega'$ in $\row \Lambda_m$ corresponds to the symmetric difference~$I_{\omega} \triangle I_{\omega'}$ of~$I_{\omega}$ and~$I_{\omega'}$ as
$$
I_{\omega+\omega'} = I_{\omega} \triangle I_{\omega'} := (I_{\omega} \cup I_{\omega'}) \setminus (I_{\omega} \cap I_{\omega'}).
$$
Note that the symmetric difference~$I_{\omega} \triangle I_{\omega'}$ of~$I_{\omega}$ and $I_{\omega'}$ always has an even cardinality.

\section{$\Bier(K)\vert_{I\sqcup\bar{J}}$ when $I=J$}\label{sec:AKL}

Consider a finite vertex set $S$ and its corresponding set $\bar{S}$.
Let $K$ and $L$ be simplicial complexes on $S$ and $\bar{S}$, respectively.
Recall that the Alexander dual~$\hat{L}$ of $L$ is defined on $S$, whereas the Alexander dual~$\hat{K}$ of $K$ is on $\bar{S}$.

Here, we define the collection $\cA_{(K,L)}$, which plays a key role in studying the full subcomplex of a Bier sphere for $I=J$ case.
The collection~$\cA_{(K,L)}$ consists of subsets of $S \sqcup \{v_S\}$ as
$$
 \cA_{(K,L)} = K \cup \{\sigma \cup \{v_S\} \colon \sigma \in \hat{L}\},
$$
where $v_S$ is a vertex not contained in $S$.
By regarding~$S$ as the corresponding set of~$\bar{S}$, the collection $\cA_{(L,K)}$ on $\bar{S} \sqcup \{v_{\bar{S}}\}$ is similarly obtained by
$$
    \cA_{(L,K)} = L \cup \{\tau \cup \{v_{\bar{S}}\} \colon \tau \in \hat{K}\}.
$$
Note that $v_S$ is a ghost vertex of $\cA_{(K,L)}$ if and only if $L$ is the power set~$2^{\bar{S}}$ of~$\bar{S}$. 
In general, a collection $\cA_{(K,L)}$ is not a simplicial complex.

\begin{proposition}~\label{sec3:prop1}
    Let $K$ and $L$ be simplicial complexes on $S$ and $\bar{S}$, respectively.
If the Alexander dual $\hat{L}$ of $L$ is a subcomplex of~$K$, then $\cA_{(K,L)}$ forms a simplicial complex on $S \sqcup \{v_S\}$.
          In this case, $K$ is the full subcomplex of $\cA_{(K,L)}$ with respect to $S$.    
\end{proposition}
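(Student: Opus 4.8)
The plan is to check the two conclusions directly from the definition $\cA_{(K,L)} = K \cup \{\sigma \cup \{v_S\} \colon \sigma \in \hat{L}\}$. Since a collection of finite sets is a simplicial complex precisely when it is closed under passing to subsets, I would fix an arbitrary $\alpha \in \cA_{(K,L)}$ together with a subset $\beta \subseteq \alpha$, and show $\beta \in \cA_{(K,L)}$ by a case analysis on whether $v_S$ lies in $\alpha$ and in $\beta$. Recall that $\hat{L}$ is a simplicial complex on $S$, so its faces are already closed under taking subsets; this will be used freely.

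First suppose $v_S \notin \alpha$. Then $\alpha \in K$, and any $\beta \subseteq \alpha$ lies in $K \subseteq \cA_{(K,L)}$ because $K$ is a simplicial complex. Now suppose $v_S \in \alpha$, so $\alpha = \sigma \cup \{v_S\}$ with $\sigma \in \hat{L}$. If $v_S \in \beta$, then $\beta \setminus \{v_S\} \subseteq \sigma$ lies in $\hat{L}$, whence $\beta = (\beta \setminus \{v_S\}) \cup \{v_S\}$ is again of the prescribed form and belongs to $\cA_{(K,L)}$. The remaining case, $v_S \in \alpha$ but $v_S \notin \beta$, is exactly where the hypothesis enters: here $\beta \subseteq \sigma \in \hat{L}$ forces $\beta \in \hat{L}$, and the assumption $\hat{L} \subseteq K$ then gives $\beta \in K \subseteq \cA_{(K,L)}$.

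I expect this last case to be the only load-bearing step, and it is the \emph{heart} of the statement: deleting $v_S$ from a face $\sigma \cup \{v_S\}$ produces a subset of $S$ lying in $\hat{L}$, and without $\hat{L} \subseteq K$ such a set need not be a face of $K$, so downward closure would fail. This also explains the closing remark of the surrounding discussion that $\cA_{(K,L)}$ is in general only a collection, not a complex. Everything else is routine bookkeeping, so there is no deep obstacle to anticipate.

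For the final assertion, the full subcomplex $\cA_{(K,L)}\vert_S$ consists of those faces of $\cA_{(K,L)}$ contained in $S$, that is, those not containing $v_S$. Every set in the second part of the union $\{\sigma \cup \{v_S\} \colon \sigma \in \hat{L}\}$ contains $v_S$, so the faces of $\cA_{(K,L)}$ avoiding $v_S$ are exactly the elements of $K$; hence $\cA_{(K,L)}\vert_S = K$.
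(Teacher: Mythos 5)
Your proof is correct and follows essentially the same route as the paper's: both verify downward closure by a case analysis on whether the face and its subset contain $v_S$, with the hypothesis $\hat{L} \subseteq K$ entering exactly in the case where $v_S$ is deleted. Your treatment of the full subcomplex claim is slightly more explicit than the paper's, but the content is the same.
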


\begin{proof}
    Since every element of~$K$ is also included in~$\cA_{(K,L)}$, it suffices to show that $\cA_{(K,L)}$ forms a simplicial complex.
    
    Let $\sigma$ be an element of~$\cA_{(K,L)}$.
    If $\sigma$ is in $K$, then so is every subset of~$\sigma$, implying that they all belong to $\cA_{(K,L)}$.
    Now, assume that $\sigma \notin K$. 
    Then, $v_S \in \sigma$ and $\sigma \setminus \{v_S\} \in \hat{L}$.
    Consider any subset~$\eta$ of $\sigma$.
    If $v_S \in \eta$, then since $\hat{L}$ is a simplicial complex, $\eta \setminus \{v_S\}$ belongs to $\hat{L}$.
    Otherwise, if $v_S \notin \eta$, then $\eta$ is in $\hat{L} \subset K$ by assumption.
    In both cases, we conclude that $\eta \in \cA_{(K,L)}$, as desired.
\end{proof}

\begin{example}\label{sec3:example}
Consider simplicial complexes
\begin{align*}
  K & =\bigl\{\emptyset, \{1\}, \{2\},\{3\},\{4\},\{1,2\},\{2,3\},\{1,3\},\{1,2,3\}\bigl\}, \text{ and} \\
  L & =\bigl\{\emptyset, \{\bar{1}\}, \{\bar{2}\},\{\bar{3}\},\{\bar{4}\},\{\bar{1},\bar{2}\},\{\bar{2},\bar{3}\},\{\bar{1},\bar{3}\},\{\bar{2},\bar{4}\},\{\bar{3},\bar{4}\}\bigl\}
\end{align*}
on $S=\{1,2,3,4\}$ and $\bar{S}=\{\bar{1},\bar{2},\bar{3},\bar{4}\}$, respectively.
We have that 
$$\hat{K}=\bigl\{\emptyset, \{\bar{1}\}, \{\bar{2}\}, \{\bar{3}\}, \{\bar{1},\bar{2}\}, \{\bar{2},\bar{3}\}, \{\bar{1},\bar{3}\}\bigl\}  \text{ and } \hat{L}=\bigl\{\emptyset, \{1\}, \{2\},\{3\},\{4\},\{2,3\}\bigl\}.
$$
It follows that
\begin{align*}
  \cA_{(K,L)} &=K \cup \bigl\{\{v_S\},\{1,v_S\},\{2,v_S\},\{3,v_S\},\{4,v_S\},\{2,3,v_S\}\bigl\}, \text{ and }\\
  \cA_{(L,K)} &= L \cup \bigl\{\{v_{\bar{S}}\}, \{\bar{1}, v_{\bar{S}}\}, \{\bar{2}, v_{\bar{S}}\}, \{\bar{3}, v_{\bar{S}}\}, \{\bar{1},\bar{2},v_{\bar{S}}\}, \{\bar{2},\bar{3},v_{\bar{S}}\}, \{\bar{1},\bar{3},v_{\bar{S}}\} \bigl\}.
\end{align*}
It is straightforward to check that $K$ contains $\hat{L}$, and $L$ contains $\hat{K}$ as subcomplexes.
Indeed, both $\cA_{(K,L)}$ and $\cA_{(L,K)}$ are simplicial complexes.
\begin{figure}
\centering
  \begin{tikzpicture}[scale=0.95]
        \fill[fill=gray!50] (-4,0)--(-5,-2)--(-3,-2);
        \node[left] at (-5,1) {\large{$K$}};
        \foreach \x/\y/\num in {-4/0/1,-2.5/0/1, -3/-2/2, -5/-2/3} {
            \fill (\x, \y) circle (1.5pt);
        }
        \node at (-4,0.3) {1};
        \node at (-5,-2.3) {2};
        \node at (-3,-2.3) {3};
        \node at (-2.5,0.3) {4};
        
        \draw (-4,0) -- (-5,-2);
        \draw (-4,0) -- (-3,-2);
        \draw (-3,-2) -- (-5,-2);
        \node[left] at (-1,1) {\large{$L$}};
        \foreach \x/\y/\num in {0/0/1,1.5/0/2, 1/-2/3, -1/-2/4} {
            \fill (\x, \y) circle (1.5pt);
        }
        \node at (0,0.3) {$\bar{1}$};
        \node at (-1,-2.3) {$\bar{2}$};
        \node at (1,-2.3) {$\bar{3}$};
        \node at (1.5,0.3) {$\bar{4}$};

        \draw (0,0) -- (1,-2);
        \draw (0,0) -- (-1,-2);
        \draw (-1,-2) -- (1,-2);
        \draw (1.5,0) -- (1,-2);
        \draw (1.5,0) -- (-1,-2);

        \fill[fill=gray!30] (4.5,-2)--(3,-2)--(5,-0.5);
        \fill[fill=gray!50] (3.75,0.5)--(3,-2)--(4.5,-2);
        \node[left] at (3.85,1) {\large{$\cA_{(K,L)}$}};
        \foreach \x/\y/\num in {3.75/0.5/5, 5/-0.5/1, 4.5/-2/3, 3/-2/2, 6.25/-0.5/4} {
            \fill (\x, \y) circle (1.5pt);
        }
        \node at (5.25,-0.5) {1};
        \node at (3,-2.3) {2};
        \node at (4.5,-2.3) {3};
        \node at (6.5, -0.5) {4};
        \node at (4.05,0.65) {$v_S$};

        \draw (5,-0.5) -- (4.24,-1.07);
        \draw[dotted] (4.24,-1.07) -- (3,-2);        
        \draw (3.75,0.5) -- (3,-2);
        \draw (3.75,0.5) -- (5,-0.5);
        \draw (3.75,0.5) -- (4.5,-2);
        \draw (5,-0.5) -- (4.5,-2);
        \draw (3,-2) -- (4.5,-2);
        \draw (6.25,-0.5) -- (3.75,0.5);

        \fill[fill=gray!50] (9.25,-0.5)--(8,0.5)--(7.25,-2);
        \fill[fill=gray!30] (9.25,-0.5)--(7.25,-2)--(8.75,-2);
        \node[left] at (8.1,1) {\large{$\cA_{(L,K)}$}};
        \foreach \x/\y/\num in {9.25/-0.5/1, 7.25/-2/2, 8.75/-2/3, 10.5/-0.5/4, 8/0.5/1} {
            \fill (\x, \y) circle (1.5pt);
        }
        \node at (9.5,-0.5) {$\bar{1}$};
        \node at (7.2,-2.3) {$\bar{2}$};
        \node at (8.8,-2.3) {$\bar{3}$};
        \node at (10.75, -0.5) {$\bar{4}$};
        \node at (8.3, 0.65) {$v_{\bar{S}}$};
        
        \draw[dotted] (9.25,-0.5) -- (7.25,-2);
        \draw (9.25,-0.5) -- (8.75,-2);
        \draw (7.25,-2) -- (8.75,-2);
        
        \draw (10.5, -0.5) -- (9.023, -1.182);
        \draw[dotted] (9.023, -1.182) -- (7.25,-2);
        \draw (10.5, -0.5) -- (8.75, -2);        
        
        \draw (8,0.5) -- (9.25,-0.5);
        \draw (8,0.5) -- (7.25,-2);
        \draw (8,0.5) -- (8.75,-2);
    \end{tikzpicture}
    \caption{Simplicial complexes $\cA_{(K,L)}$ and $\cA_{(L,K)}$}
    \label{sec3:figure1}
    \end{figure}
Refer Figure~\ref{sec3:figure1}.
\end{example}

It should be noted that, in Example~\ref{sec3:example}, the Alexander dual of~$\cA_{(K,L)}$ is $\cA_{(L,K)}$.
This phenomenon can be generalized as follows.

\begin{lemma}\label{sec3:lemma1}
Let $K$ and $L$ be simplicial complexes on $S$ and $\bar{S}$, respectively.
Assume that $\hat{K}$ is a subcomplex of $L$ and that $\hat{L}$ is a subcomplex of $K$.
Then, the Alexander dual~$\hat{\cA}_{(K,L)}$ of~$\cA_{(K,L)}$ coincides with $\cA_{(L,K)}$, identifying~$\overline{v_S}$ with~$v_{\bar{S}}$.
\end{lemma}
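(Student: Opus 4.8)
The plan is to verify the set equality $\hat{\cA}_{(K,L)} = \cA_{(L,K)}$ directly from the definition of the Alexander dual, organized by a single case split on whether the auxiliary vertex appears. First I would pin down the ground sets: $\cA_{(K,L)}$ lives on $S \sqcup \{v_S\}$, so its Alexander dual lives on $\bar{S} \sqcup \{\overline{v_S}\}$, and under the identification $\overline{v_S} = v_{\bar{S}}$ this is exactly the ground set $\bar{S} \sqcup \{v_{\bar{S}}\}$ of $\cA_{(L,K)}$. The hypotheses $\hat{K} \subseteq L$ and $\hat{L} \subseteq K$ enter here only through Proposition~\ref{sec3:prop1}, which guarantees that both $\cA_{(K,L)}$ and $\cA_{(L,K)}$ are genuine simplicial complexes, so that taking the Alexander dual of the former and comparing it with the latter is meaningful.

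Unwinding the definition, a barred set $\bar{\alpha}$, with $\alpha \subseteq S \sqcup \{v_S\}$, lies in $\hat{\cA}_{(K,L)}$ precisely when the complement $(S \sqcup \{v_S\}) \setminus \alpha$ fails to be a face of $\cA_{(K,L)}$. The decisive observation is that the auxiliary vertex $v_S$ toggles which half of $\cA_{(K,L)} = K \cup \{\eta \cup \{v_S\} : \eta \in \hat{L}\}$ governs membership: a set containing $v_S$ lies in $\cA_{(K,L)}$ if and only if its $v_S$-deletion lies in $\hat{L}$, while a set avoiding $v_S$ lies in $\cA_{(K,L)}$ if and only if it lies in $K$. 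Both of these hold straight from the definition, independently of the subcomplex hypotheses.

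The case split then runs as follows. If $v_S \notin \alpha$, write $\alpha = \sigma$; its complement $(S \setminus \sigma) \cup \{v_S\}$ contains $v_S$, so $\bar{\sigma} \in \hat{\cA}_{(K,L)}$ iff $S \setminus \sigma \notin \hat{L}$. If $v_S \in \alpha$, write $\alpha = \sigma \cup \{v_S\}$; its complement $S \setminus \sigma$ avoids $v_S$, so $\bar{\sigma} \cup \{v_{\bar{S}}\} \in \hat{\cA}_{(K,L)}$ iff $S \setminus \sigma \notin K$. It remains to convert each condition into membership in $\cA_{(L,K)} = L \cup \{\tau \cup \{v_{\bar{S}}\} : \tau \in \hat{K}\}$. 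For the first I would apply the definition of $\hat{L}$ — the Alexander dual of a complex on $\bar{S}$, hence a complex on $S$ whose complements are taken in $\bar{S}$ — to rewrite $S \setminus \sigma \notin \hat{L}$ as $\bar{\sigma} \in L$; for the second the definition of $\hat{K}$ rewrites $S \setminus \sigma \notin K$ as $\bar{\sigma} \in \hat{K}$. These match exactly the $v_{\bar{S}}$-free and $v_{\bar{S}}$-containing halves of $\cA_{(L,K)}$, closing the equality.

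I expect the only real obstacle to be bookkeeping: keeping the two levels of barring straight — in particular remembering that $\hat{L}$ sits on $S$ but is defined through complements in $\bar{S}$ — and threading the identification $\overline{v_S} = v_{\bar{S}}$ consistently, so that the toggle produced by $v_S$ lines up correctly with the swap $K \leftrightarrow \hat{K}$, $L \leftrightarrow \hat{L}$ that Alexander duality performs. Once the two defining equivalences $S \setminus \sigma \notin \hat{L} \Leftrightarrow \bar{\sigma} \in L$ and $S \setminus \sigma \notin K \Leftrightarrow \bar{\sigma} \in \hat{K}$ are established, the argument terminates immediately.
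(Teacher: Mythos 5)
Your proposal is correct and follows essentially the same route as the paper's proof: unwind the definition of the Alexander dual and split on whether the auxiliary vertex lies in the set, using that a face of $\cA_{(K,L)}$ containing $v_S$ is governed by $\hat{L}$ and one avoiding $v_S$ by $K$. The only difference is organizational --- you run each case as a single biconditional where the paper verifies the two inclusions separately --- and your observation that the hypotheses $\hat{K}\subseteq L$, $\hat{L}\subseteq K$ are needed only to make both sides simplicial complexes (via Proposition~\ref{sec3:prop1}) is accurate.
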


\begin{proof}

    We now show that $\hat{\cA}_{(K,L)} \subset \cA_{(L,K)}$.    
    Consider a subset $\sigma \subset S \sqcup \{v_S\}$ such that $\bar{\sigma} \in \hat{\cA}_{(K,L)}$, that is,
    $$
      (S \sqcup \{v_S\}) \setminus \sigma \notin \cA_{(K,L)}.
    $$

    To prove that $\bar{\sigma} \in \cA_{(L,K)}$, we distinguish two cases; $v_S \in \sigma$ and $v_S \notin \sigma$.
    First, assume that~$v_S \in \sigma$.
    Then, we have
    $$
    S \setminus (\sigma \cap S) = (S \sqcup \{v_S\}) \setminus \sigma \notin K.
    $$
    This implies that $\overline{\sigma \cap S} = \bar{\sigma} \cap \bar{S}$ is a simplex in the Alexander dual~$\hat{K}$ of~$K$.
    Since $v_{\bar{S}}$ must be contained in $\bar{\sigma}$, we obtain $\bar{\sigma} = (\bar{\sigma} \cap \bar{S}) \cup \{v_{\bar{S}}\}$.
    Therefore, $\bar{\sigma} \in \cA_{(L,K)}$.
    Now, consider the case where $v_S \notin \sigma$.
    In this case, we obtain that
    $$
        (S \setminus \sigma) \sqcup \{v_S\} = (S \sqcup \{v_S\}) \setminus \sigma \notin \cA_{(K,L)}.
    $$
    It follows that $S \setminus \sigma \notin \hat{L}$, and thus, $\bar{\sigma} \in L \subset \cA_{(L,K)}$, as desired.
    This completes the proof of~$\hat{\cA}_{(K,L)} \subset \cA_{(L,K)}$. 

    We next show the reverse inclusion  $\hat{\cA}_{(K,L)} \supset \cA_{(L,K)}$.
    Consider a subset  $\tau \subseteq \bar{S} \sqcup \{v_{\bar{S}}\}$ such that $\tau \in \cA_{(L,K)}$.    
    As before, we proceed with the proof by investigating two cases; either $v_{\bar{S}} \in \tau$ or $v_{\bar{S}} \notin \tau$.
    If $v_{\bar{S}} \in \tau$, then it immediately follows that $\tau \notin L$, and consequently, $\tau \cap \bar{S} \in \hat{K}$.
    Therefore, we have    
    \begin{equation}\label{in_proof1}
     (S \sqcup \{v_S\}) \setminus \bar{\tau} = S \setminus (\bar{\tau} \cap S) \notin K.
    \end{equation}
    Moreover, since $v_S \in \bar{\tau}$, we deduce that
    \begin{equation}\label{in_proof2}
      (S \sqcup \{v_S\}) \setminus \bar{\tau} \notin \{\sigma \sqcup \{v_S\} \colon \sigma \in \hat{L}\}.
    \end{equation}
    By~\eqref{in_proof1} and \eqref{in_proof2}, we conclude that $\tau$ is a simplex in~$\hat{\cA}_{(K,L)}$.
       
    Otherwise, if $v_{\bar{S}} \notin \tau$, then $\tau \in L$, which implies that $S \setminus \bar{\tau} \notin \hat{L}$.
    Since $v_S \notin \bar{\tau}$, 
    $$
    (S \sqcup \{v_S\}) \setminus \bar{\tau} = (S \setminus \bar{\tau} ) \sqcup \{v_S\} \notin K \cup \{\sigma \sqcup \{v_S\} \colon \sigma \in \hat{L}\} = \cA_{(K,L)},
    $$ 
    which confirms that~$\tau \in \hat{\cA}_{(K,L)}$.
    Therefore, $\hat{\cA}_{(K,L)} \supset \cA_{(L,K)}$.
\end{proof}

Let $K$ be a simplicial complex $K$ on $[m]= \{1,\ldots,m\}$, which is not the power set~$2^{[m]}$ of~$[m]$.
For a subset~$I\subseteq [m]$, we consider the full subcomplex~$\Bier(K)\vert_{I\sqcup \bar{I}}$ in the case where~$I \sqcup \bar{I}$.

\begin{lemma}~\label{sec3:lemma2}
  Let $K$ be a simplicial complex on $S$ and $I$ a subset of $S$.
  Then,
  $$
  \Bier(K)\vert_{I \sqcup\bar{I}} = \Bier(\cA_{(K\vert_I, (\hat{K})\vert_{\bar{I}})})\vert_{I \sqcup \bar{I}}.
  $$
\end{lemma}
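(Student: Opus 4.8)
The plan is to reduce both sides to deleted joins and then match the two factors one at a time. Writing $\cA := \cA_{(K\vert_I,\,\hat{K}\vert_{\bar{I}})}$ for the auxiliary collection on $I\sqcup\{v_I\}$, I would first apply the deleted-join description of full subcomplexes of Bier spheres, $\Bier(\Gamma)\vert_{A\sqcup\bar{B}} = \Gamma\vert_A \ast_\Delta \hat{\Gamma}\vert_{\bar{B}}$, recorded in the preliminaries, to both sides. On the left this gives $\Bier(K)\vert_{I\sqcup\bar{I}} = K\vert_I \ast_\Delta \hat{K}\vert_{\bar{I}}$, and on the right it gives $\Bier(\cA)\vert_{I\sqcup\bar{I}} = \cA\vert_I \ast_\Delta \hat{\cA}\vert_{\bar{I}}$. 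Hence it suffices to establish the two equalities of simplicial complexes $\cA\vert_I = K\vert_I$ and $\hat{\cA}\vert_{\bar{I}} = \hat{K}\vert_{\bar{I}}$.

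Before that, I would check that $\cA$ is genuinely a simplicial complex, which is needed for $\Bier(\cA)$ to be defined, and simultaneously verify the hypotheses of Lemma~\ref{sec3:lemma1}. The two containments required are $\widehat{K\vert_I}\subseteq \hat{K}\vert_{\bar{I}}$ and $\widehat{\hat{K}\vert_{\bar{I}}}\subseteq K\vert_I$. Both follow from one complement computation together with the downward closure of $K$: for $\tau\subseteq I$ one has $\widehat{K\vert_I}=\{\bar{\tau} : I\setminus\tau\notin K\}$ and $\hat{K}\vert_{\bar{I}}=\{\bar{\sigma} : \sigma\subseteq I,\ [m]\setminus\sigma\notin K\}$, and since $I\setminus\tau\subseteq[m]\setminus\tau$ the nonmembership $I\setminus\tau\notin K$ forces $[m]\setminus\tau\notin K$; dually $\widehat{\hat{K}\vert_{\bar{I}}}=\{\tau\subseteq I : ([m]\setminus I)\cup\tau\in K\}$, and $([m]\setminus I)\cup\tau\in K$ forces $\tau\in K$. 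The second containment, via Proposition~\ref{sec3:prop1}, already shows $\cA$ is a simplicial complex; it is not the full power set, since $I\cup\{v_I\}\in\cA$ would force $[m]\in K$, contrary to the standing assumption $K\neq 2^{[m]}$.

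With both containments in hand, Lemma~\ref{sec3:lemma1} applies and identifies the Alexander dual as $\hat{\cA} = \cA_{(\hat{K}\vert_{\bar{I}},\,K\vert_I)}$ on $\bar{I}\sqcup\{v_{\bar{I}}\}$, identifying $\overline{v_I}$ with $v_{\bar{I}}$. Proposition~\ref{sec3:prop1} says precisely that the base complex of a collection $\cA_{(K,L)}$ is recovered as its full subcomplex on the original vertex set: applying this to $\cA$ gives $\cA\vert_I = K\vert_I$, and applying it to $\hat{\cA} = \cA_{(\hat{K}\vert_{\bar{I}},\,K\vert_I)}$ gives $\hat{\cA}\vert_{\bar{I}} = \hat{K}\vert_{\bar{I}}$. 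Substituting these into $\cA\vert_I\ast_\Delta\hat{\cA}\vert_{\bar{I}}$ yields $K\vert_I\ast_\Delta \hat{K}\vert_{\bar{I}}$, which equals the left-hand side, completing the argument.

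I expect the main obstacle to be the bookkeeping in the two containments, specifically keeping straight which ground set ($I$ versus $[m]$) each nonmembership condition refers to and invoking downward closure to reconcile them; the identity $([m]\setminus I)\cup(I\setminus\eta)=[m]\setminus\eta$ for $\eta\subseteq I$ is exactly what makes the restricted and unrestricted Alexander duals agree on $\bar{I}$. Everything else is a formal consequence of the deleted-join description of full subcomplexes of Bier spheres together with Proposition~\ref{sec3:prop1} and Lemma~\ref{sec3:lemma1}. (If one prefers to avoid Lemma~\ref{sec3:lemma1}, the same equality $\hat{\cA}\vert_{\bar{I}}=\hat{K}\vert_{\bar{I}}$ can be obtained directly by computing, for $\eta\subseteq I$, that $(I\setminus\eta)\cup\{v_I\}\in\cA$ iff $[m]\setminus\eta\in K$, so that $\bar{\eta}\in\hat{\cA}$ iff $[m]\setminus\eta\notin K$.)
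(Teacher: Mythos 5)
Your proof is correct and follows essentially the same route as the paper: establish the two containments $\widehat{K\vert_I}\subseteq \hat{K}\vert_{\bar{I}}$ and $\widehat{\hat{K}\vert_{\bar I}}\subseteq K\vert_I$ via downward closure, invoke Proposition~\ref{sec3:prop1} and Lemma~\ref{sec3:lemma1} to identify $\cA\vert_I$, $\hat{\cA}\vert_{\bar I}$ with $K\vert_I$, $\hat{K}\vert_{\bar I}$, and match the two deleted joins. The only (harmless) additions are your explicit check that $\cA$ is not a full power set and the alternative direct computation of $\hat{\cA}\vert_{\bar I}$.
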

\begin{proof}
For the full subcomplex $K\vert_I$ of $K$, let us consider a simplex~$\tau$ in the Alexander dual~$\widehat{K\vert_I}$ of~$K\vert_I$.
Since $\tau \in \widehat{K\vert_I}$, it follows that $I \setminus \bar{\tau}$ is not a simplex in $K\vert_I$.
Thus, we have $I \setminus \bar{\tau} \notin K$, which confirms that $\tau \in \hat{K}$.
Since $\tau \subset \bar{I}$, we conclude that $\tau$ is a simplex in the full subcomplex $(\hat{K})\vert_{\bar{I}}$ of $\hat{K}$.
Therefore, we obtain
\begin{equation}\label{sec3:eq2}
      \widehat{K\vert_I} \subset (\hat{K})\vert_{\bar{I}}
\end{equation}
as a subcomplex.

Applying \eqref{sec3:eq2} by replacing $K$ with $\hat{K}$ and $I$ with $\bar{I}$, we obtain that the Alexander dual of~$(\hat{K})\vert_{\bar{I}}$ is a subcomplex of $K\vert_I$.
Combining Lemma~\ref{sec3:lemma1} with~\eqref{sec3:eq2}, it follows that
$$
  \hat{\cA}_{(K\vert_I,(\hat{K})\vert_{\bar{I}})} = \cA_{((\hat{K})\vert_{\bar{I}},K\vert_I)}.
$$
Furthermore, by Proposition~\ref{sec3:prop1}, we have
$$
\cA_{(K\vert_I,(\hat{K})\vert_{\bar{I}})}\vert_I = K\vert_I \text{ and } \hat{\cA}_{(K\vert_I, (\hat{K})\vert_{\bar{I}})}\vert_{\bar{I}} = (\hat{K})\vert_{\bar{I}}.
$$
We conclude that
    \begin{align*}
      \Bier(K)\vert_{I\sqcup\bar{I}} & = \{\sigma \sqcup \tau \colon \sigma \in K, \tau \in \hat{K}, \bar{\sigma} \cap \tau = \emptyset, \sigma \subset I, \tau \subset \bar{I}\} \\
      &= \{\sigma \sqcup \tau \colon \sigma \in K\vert_I, \tau \in (\hat{K})\vert_{\bar{I}}, \bar{\sigma} \cap \tau = \emptyset\}\\
      &= K\vert_I \ast_{\Delta} (\hat{K})\vert_{\bar{I}}\\
      &= \cA_{(K\vert_I,(\hat{K})\vert_{\bar{I}})}\vert_I \ast_{\Delta} (\hat{\cA}_{(K\vert_I,(\hat{K})\vert_{\bar{I}})})\vert_{\bar{I}} \\
      & = \{\sigma \sqcup \tau \colon \sigma \in \cA_{(K\vert_I,(\hat{K})\vert_{\bar{I}})}, \tau \in \hat{\cA}_{(K\vert_I,(\hat{K})\vert_{\bar{I}})}, \bar{\sigma} \cap \tau = \emptyset, \sigma \subset I, \tau \subset \bar{I}\}\\
      &= \Bier(\cA_{(K\vert_I,(\hat{K})\vert_{\bar{I}})})\vert_{I\sqcup\bar{I}}.
    \end{align*}
\end{proof}

We are ready to describe the topology of the full subcomplex~$\Bier(K)\vert_{I\sqcup\bar{I}}$.
By Lemma~\ref{sec3:lemma2}, $\Bier(K)\vert_{I\sqcup\bar{I}}$ is obtainable from~$\Bier(\cA_{(K\vert_I, (\hat{K})\vert_{\bar{I}})})$ by removing the two vertices $v_I$ and $v_{\bar{I}}$, where $v_I$ and $v_{\bar{I}}$ are not necessarily non-ghost vertices.

\begin{theorem}\label{sec3:thm}
    Let $K\neq 2^{[m]}$ be a simplicial complex on $[m]$.
    For a subset $I \subseteq [m]$ of cardinality~$r \geq 1$, the full subcomplex $\Bier(K)\vert_{I\sqcup \bar{I}}$ satisfies the following:
    \begin{enumerate}
      \item it is the boundary complex of the $r$-dimensional cross-polytope, if both $I \in K$ and $\bar{I} \in \hat{K}$,
      \item it is homotopy equivalent to an $(r-2)$-sphere, if neither $I \in K$ nor $\bar{I} \in \hat{K}$, and 
      \item it is contractible otherwise,
    \end{enumerate}
    where the $(-1)$-sphere is the empty set.
\end{theorem}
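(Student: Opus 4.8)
The plan is to reduce everything to deleting at most two vertices from a genuine PL sphere. Write $\cA := \cA_{(K\vert_I,(\hat K)\vert_{\bar I})}$, which is a simplicial complex on $I \sqcup \{v_I\}$ by Proposition~\ref{sec3:prop1} (its hypothesis holds by~\eqref{sec3:eq2}), and recall from Lemma~\ref{sec3:lemma2} and the remark preceding the theorem that $\Bier(K)\vert_{I\sqcup\bar I}$ is exactly the full subcomplex of $\Bier(\cA)$ obtained by deleting the two vertices $v_I$ and $v_{\bar I}$. Since $K\neq 2^{[m]}$ gives $[m]\notin K$, one checks that $I\sqcup\{v_I\}\notin\cA$, so $\cA$ is not a full simplex and $\Bier(\cA)$ is a genuine PL sphere of dimension $r-1$ on the vertex set $I\sqcup\{v_I\}\sqcup\bar I\sqcup\{v_{\bar I}\}$. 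The first step is then to record, from the ghost-vertex criterion for $\cA_{(\cdot,\cdot)}$ together with $\hat\cA = \cA_{((\hat K)\vert_{\bar I},K\vert_I)}$ (Lemma~\ref{sec3:lemma1}), that $v_I$ is a genuine vertex of $\Bier(\cA)$ if and only if $\bar I\notin\hat K$, and $v_{\bar I}$ is genuine if and only if $I\notin K$; equivalently, $v_I$ is a ghost iff $\bar I\in\hat K$ and $v_{\bar I}$ is a ghost iff $I\in K$.

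The heart of the argument is a topological lemma: if $\Sigma$ is a PL $d$-sphere and $w_1,w_2$ are two vertices that are not adjacent in $\Sigma$, then $\Sigma\vert_{V\setminus\{w_1\}}$ is a PL $d$-ball (hence contractible) and $\Sigma\vert_{V\setminus\{w_1,w_2\}}$ is homotopy equivalent to $S^{d-1}$. I would prove this geometrically. The space $C_1 := |\Sigma\vert_{V\setminus\{w_1\}}|$ equals $|\Sigma|$ with the open star of $w_1$ deleted, i.e. the complement of a regular neighbourhood of $w_1$, and is therefore a PL $d$-ball with boundary $|\Lk(w_1,\Sigma)|$. Because $w_2$ is not adjacent to $w_1$ it lies off this boundary, so $w_2$ is an interior vertex of $C_1$; consequently the open star of $w_2$ in $C_1$ is an open $d$-ball contained in the interior of $C_1$, and deleting it leaves $|\Sigma\vert_{V\setminus\{w_1,w_2\}}|$ as $C_1$ with an interior open ball removed, which deformation retracts onto a $(d-1)$-sphere. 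Here I would use the standing fact that $v_I$ and $v_{\bar I}=\overline{v_I}$ are never adjacent in a Bier sphere, since a common face would place $\overline{v_I}$ in $\bar\sigma\cap\tau$.

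Taking $\Sigma=\Bier(\cA)$ and $d=r-1$, the three cases now fall out according to how many of $v_I,v_{\bar I}$ are genuine. In case~(2) we have $I\notin K$ and $\bar I\notin\hat K$, so both vertices are genuine and non-adjacent, giving $S^{r-2}$. In case~(3) exactly one of the two is a ghost, so the net effect is the deletion of a single genuine vertex, producing a $d$-ball, hence contractible. In case~(1) we have $I\in K$ and $\bar I\in\hat K$, so both are ghosts and $\Bier(K)\vert_{I\sqcup\bar I}=\Bier(\cA)=S^{r-1}$; to identify this as the cross-polytope boundary I would instead argue directly, using $K\vert_I=2^I$ and $(\hat K)\vert_{\bar I}=2^{\bar I}$, that $\Bier(K)\vert_{I\sqcup\bar I}=2^I \ast_\Delta 2^{\bar I}$, whose faces are precisely the subsets of $I\sqcup\bar I$ containing no antipodal pair $\{i,\bar i\}$, namely the boundary complex of the $r$-dimensional cross-polytope.

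The main obstacle is making the topological lemma fully rigorous, in particular the claim that the open star of the interior vertex $w_2$ is an open $d$-ball disjoint from $\partial C_1$. This amounts to checking that no face of $C_1$ containing $w_2$ lies in the boundary subcomplex $\partial C_1=\Lk(w_1,\Sigma)$ (true since $w_2\notin\Lk(w_1,\Sigma)$ and the boundary is a full subcomplex), so that the relative interiors of such faces avoid $|\partial C_1|$; the remainder is the standard PL fact that a $d$-ball with an open interior ball removed is homotopy equivalent to $S^{d-1}$. A secondary point requiring care is that deleting a ghost vertex leaves the complex unchanged, which is exactly what makes case~(3) collapse to a single genuine deletion, and that $\cA$ is genuinely not a full simplex so that $\Bier(\cA)$ really is an $(r-1)$-sphere.
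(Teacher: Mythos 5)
Your proposal follows essentially the same route as the paper: reduce via Lemma~\ref{sec3:lemma2} to deleting the vertices $v_I$ and $v_{\bar I}$ from the sphere $\Bier(\cA_{(K\vert_I,(\hat K)\vert_{\bar I})})$, determine which of the two are ghosts according to whether $I\in K$ and $\bar I\in\hat K$, and handle case (1) by the same direct cross-polytope computation. The only divergence is that where the paper cites Exercise~1.31 of~\cite{Rotman_book1988} for the effect of removing one or two non-adjacent genuine vertices from a simplicial sphere, you supply a self-contained PL argument (the antistar of a vertex in a PL sphere is a PL ball, etc.), which is legitimate here since Bier spheres are shellable and hence PL.
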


\begin{proof}
To begin with, we investigate the case where $I \in K$ and $\bar{I} \in \hat{K}$.
This implies that both $K_I$ and $\hat{K}_{\bar{I}}$ are the power sets~$2^{I}$ and~$2^{\bar{I}}$, respectively.
For $I=\{i_1,\ldots,i_r\}$, we have
\begin{align*}
  \Bier(K)\vert_{I\sqcup \bar{I}} &= \{\sigma \sqcup \tau \colon \sigma \subset I, \tau \subset \bar{I}, \bar{\sigma} \cap \tau = \emptyset\} \\
  & = \{\eta \subset I \sqcup \bar{I} \colon \{i_\ell,\bar{i}_\ell\} \not\subset \eta \text{ for all } 1 \leq \ell \leq r\}\\
   & = \bigl\{\{i_1\}, \{\bar{i}_1\}\bigl\}\ast \cdots \ast \bigl\{\{i_r\}, \{\bar{i}_r\} \bigl\}.
\end{align*}
We conclude that $\Bier(K)\vert_{I\sqcup \bar{I}}$ is the boundary complex of the $r$-dimensional cross-polytope.

Subsequently, we turn our consideration to the case where either~$I\in K$ or~$\bar{I} \in \hat{K}$.
Without loss of generality, we assume that $I \in K$ and $\bar{I} \notin \hat{K}$.
Then, $v_I$ is a non-ghost vertex of~$\cA_{(K\vert_I, (\hat{K})\vert_{\bar{I}})}$, while $v_{\bar{I}}$ is a ghost vertex of~$\cA_{((\hat{K})\vert_{\bar{I}}, K\vert_I)}$.
Since the full subcomplex~$\Bier(K)\vert_{I\sqcup \bar{I}}$ is obtained from $\Bier(\cA_{(K\vert_I,(\hat{K})\vert_{\bar{I}})})$ by removing exactly one non-ghost vertex, we obtain that $\Bier(K)\vert_{I\sqcup \bar{I}}$ is contractible.

Finally, consider the case where neither $I \in K$ nor $\bar{I} \in \hat{K}$.
In this case, the full subcomplex~$\Bier(K)\vert_{I\sqcup \bar{I}}$ is obtained from $\Bier(\cA_{(K\vert_I,(\hat{K})\vert_{\bar{I}})})$ by removing the two non-ghost vertices~$v_I$ and $v_{\bar{I}}$.
Notably, there is no $1$-simplex $\{v_I,v_{\bar{I}}\}$ in $\Bier(\cA_{(K\vert_I,(\hat{K})\vert_{\bar{I}})})$.
Therefore, from~Exercise~1.31~in~\cite{Rotman_book1988}, the full subcomplex~$\Bier(K)\vert_{I\sqcup \bar{I}}$ is homotopy equivalent to an $(r-2)$-sphere.
\end{proof}

\begin{example}~\label{ex:3agt}
  Let $K$ be the simplicial complex on $\{1,2,3,4\}$ such that
  $$
  K=\{\emptyset, \{1\}, \{2\}, \{3\}, \{4\}, \{1,2\}, \{1,3\}, \{2,3\}, \{1,2,3\}\}.
  $$
  Then, its Alexander dual $\hat{K}$ is given by
  $$
  \hat{K} = \{\emptyset, \{\bar{1}\}, \{\bar{2}\}, \{\bar{3}\}, \{\bar{1},\bar{2}\}, \{\bar{1},\bar{3}\}, \{\bar{2},\bar{3}\}\},
  $$
  and the Bier sphere $\Bier(K)$ of $K$ is shown in Figure~\ref{sec2:figure1}.
  By Theorem~\ref{sec3:thm}, one can check that topological type of the full subcomplex~$\Bier(K)_{I \sqcup \bar{I}}$ for each~$I \subset \{1,2,3,4\}$.
  Refer to~Figure~\ref{sec2:figure1-2}.
    
    \begin{figure}
  \begin{tikzpicture}
        \node[left] at (-2.7,1) {$I = \{1,2\}$};
        \foreach \x/\y/\num in {-3/0/1, -2/-2/2, -4/-2/3} {
            \fill (\x, \y) circle (1.5pt);
        }
        \fill (-1,0) circle (1.5pt);
        \node at (-3,0.3) {$1$};
        \node at (-4,-2.3) {$2$};
        \node at (-2,-2.3) {$\bar{1}$};
        \node at (-1,0.3) {$\bar{2}$};
        
        \draw (-3,0) -- (-4,-2); 
        \draw (-2, -2) -- (-4,-2);
        \draw (-2,-2) -- (-1,0);
        \draw (-1,0) -- (-3,0); 
        \node[left] at (2.3,1) {$I = \{1,3,4\}$};
        \foreach \x/\y/\num in {2/0/1, 2.66/-2/3, 1.33/-2/4, 3.33/0/2, 4/-1/1} {
            \fill (\x, \y) circle (1.5pt);
        }
        \draw (0.66,-1) circle (1.5pt);
        \node at (2,0.3) {$1$};
        \node at (1.33,-2.3) {$3$};
        \node at (2.66,-2.3) {$\bar{1}$};
        \node at (3.33,0.3) {$\bar{3}$};
        \node at (4, -0.7) {$4$};
        \node at (0.66, -0.7) {$\bar{4}$};

        \draw (2,0) -- (1.33,-2);
        \draw (2,0) -- (3.33,0);
        \draw (2.66,-2) -- (1.33,-2);
        \draw (2.66,-2) -- (4,-1);
        \draw (4,-1) -- (3.33,0);

        \node[left] at (7.3,1) {$I = \{1,2,3\}$};
        \filldraw[gray!50] (7.5, 0.5) -- (6.25, -1) -- (8, -1) -- cycle;
    \filldraw[gray!50] (7.5, 0.5) -- (8, -1) -- (8.75, -0.5) -- cycle;
    \filldraw[gray!50] (7.5, 0.5) -- (8.75, -0.5) -- (7, -0.5) -- cycle;
    \filldraw[gray!50] (7.5, -2) -- (7, -0.5) -- (6.25, -1) -- cycle;
    \filldraw[gray!50] (7, -0.5) -- (8, -1) -- (7.5, -2) -- cycle;
    \filldraw[gray!20] (7.5, -2) -- (8,-1) -- (8.75, -0.5) -- cycle;
        \fill (7, -0.5) circle (1.5pt);
        \fill (6.25, -1) circle (1.5pt);
        \fill (8, -1) circle (1.5pt);
        \fill (8.75, -0.5) circle (1.5pt);
        \fill (7.5, 0.5) circle (1.5pt);
        \fill (7.5, -2) circle (1.5pt);

        \node at (7.5, 0.8) {1};
        \node at (7.2,-0.3) {2};
        \node at (6,-1) {3};
        \node at (7.5,-2.3) {$\bar{1}$};
        \node at (8.1, -0.7) {$\bar{2}$};
        \node at (8.9,-0.3) {$\bar{3}$};

        \draw[dotted] (7.5, 0.5) -- (7, -0.5);
        \draw (7.5, 0.5) -- (6.25, -1);
        \draw (7.5, 0.5) -- (8,-1);
        \draw (7.5, 0.5) -- (8.75, -0.5);
        
        \draw[dotted] (7, -0.5) -- (6.25, -1);
        \draw (6.25, -1) -- (8, -1);
        \draw (8, -1) -- (8.75, -0.5);
        \draw[dotted] (8.75, -0.5) -- (7, -0.5);
        
        \draw[dotted] (7.5, -2) -- (7, -0.5);
        \draw (7.5, -2) -- (6.25, -1);
        \draw (7.5, -2) -- (8,-1);
        \draw (7.5, -2) -- (8.75, -0.5);
    \end{tikzpicture}
    \caption{Full subcomplexes $\Bier(K)_{I \sqcup \bar{I}}$}
    \label{sec2:figure1-2}
    \end{figure}
\end{example}

\section{$\Bier(K)\vert_{I\sqcup\bar{J}}$ when $I\not\subset J$ and $I\not\supset J$}~\label{sec3}
Throughout this section, we assume that $I$ and $J$ are subsets of~$[m] = \{1,\ldots,m\}$.
We discuss the topological types of the full subcomplex~$\Bier(K)\vert_{I \sqcup \bar{J}}$ of Bier spheres~$\Bier(K)$ in the case where~$I\not\subset J$ and $I\not\supset J$.

To begin, we focus on the case where~$I$ and~$J$ are disjoint.
In this case, the full subcomplex~$\Bier(K)\vert_{I \sqcup \bar{J}}$ of the Bier sphere~$\Bier(K)$ defined as the deleted join~$K\vert_{I} \ast_{\Delta} \hat{K}\vert_{\bar{J}}$ coincides with the ordinary simplicial join~$K\vert_{I} \ast \hat{K}\vert_{\bar{J}}$, that is,
\begin{equation}\label{eq:Bierjoin}
  \Bier(K)\vert_{I \sqcup \bar{J}} = K\vert_{I} \ast \hat{K}\vert_{\bar{J}}.
\end{equation}

The following lemma was originally proved in~\cite[Lemma~$3.3$]{Heudtlass-Katthan2012}, and for the reader's convenience we provide a more intuitive proof here.

\begin{lemma}~\label{lem:simplex}
  Let $K$ be a simplicial complex on $[m]=\{1,2,\ldots,m\}$, and $I,J$ nonempty subsets of $[m]$ with~$I\cap J=\emptyset$.
  Then, either $K\vert_I$ or $\hat{K}\vert_{\bar{J}}$ is a simplex.
  Moreover, the full subcomplex~$\Bier(K)\vert_{I\sqcup\bar{J}}$ is contractible. 
\end{lemma}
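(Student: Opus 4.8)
The plan is to reduce both assertions to a single combinatorial dichotomy: under the disjointness hypothesis, at least one of $I \in K$ or $\bar{J} \in \hat{K}$ holds. I would prove this by contraposition. Suppose $I \notin K$; I claim $\bar{J} \in \hat{K}$. By the definition of the Alexander dual, the failure $\bar{J} \notin \hat{K}$ is equivalent to $[m] \setminus J \in K$. But $I \cap J = \emptyset$ gives $I \subseteq [m] \setminus J$, and since $K$ is closed under passing to subsets of its faces, $[m] \setminus J \in K$ would force $I \in K$, a contradiction. Hence $\bar{J} \in \hat{K}$, and the dichotomy follows.

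Granting the dichotomy, the first statement is immediate. If $I \in K$, then every subset of $I$ is a face, so $K\vert_I = 2^I$ is a full simplex; symmetrically, if $\bar{J} \in \hat{K}$, then $\hat{K}\vert_{\bar{J}} = 2^{\bar{J}}$ is a full simplex. Thus one of the two full subcomplexes is always a simplex.

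For contractibility I would use the join identity \eqref{eq:Bierjoin}, namely $\Bier(K)\vert_{I \sqcup \bar{J}} = K\vert_I \ast \hat{K}\vert_{\bar{J}}$, which is available precisely because $I$ and $J$ are disjoint. The key observation is that a simplicial join $2^V \ast B$ with $V \neq \emptyset$ is a cone: fixing a vertex $v \in V$, every face $\eta = \sigma \sqcup \tau$ (with $\sigma \subseteq V$ and $\tau \in B$) satisfies $\eta \cup \{v\} = (\sigma \cup \{v\}) \sqcup \tau \in 2^V \ast B$, so $v$ is a cone point and the join is contractible. I would then apply this with $V = I$ when $I \in K$ (using $I \neq \emptyset$), or with $V = \bar{J}$ when $\bar{J} \in \hat{K}$ (using $J \neq \emptyset$).

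I do not expect a serious obstacle: the entire content sits in the dichotomy of the first paragraph, after which the simplex claim is a definitional unwinding and the contractibility is a routine cone-point argument. The only points demanding care are the correct reading of the Alexander-dual definition (that $\bar{J}\notin\hat{K}$ means $[m]\setminus J\in K$) and making sure the relevant vertex set is nonempty so that an honest cone point exists, which is exactly where the nonemptiness of $I$ and $J$ enters.
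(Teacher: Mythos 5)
Your proof is correct and follows essentially the same route as the paper: your dichotomy is the contrapositive restatement of the paper's Alexander-duality argument (the paper picks a non-face $\sigma\subseteq I$ and observes $\bar J\subseteq[\bar m]\setminus\bar\sigma\in\hat K$, which is the same computation read in the other direction), and both proofs then conclude via the join description $\Bier(K)\vert_{I\sqcup\bar J}=K\vert_I\ast\hat K\vert_{\bar J}$. The only cosmetic difference is that you exhibit an explicit cone point in $2^V\ast B$, whereas the paper cites the general fact that the join of any complex with a contractible one is contractible.
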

\begin{proof}
  If $I$ is a simplex in $K$, then $K\vert_I$ is $(\left\vert I \right\vert - 1)$-simplex. 
  Thus, we may assume that there is a subset $\sigma \subset I$ which is not a simplex in $K\vert_I$.
  By the definition of the Alexander dual, $[\bar{m}]\setminus\bar{\sigma}$ is a simplex in $\hat{K}$.
  Since $I$ and $J$ are disjoint, we obtain
  $$
  \bar{J} \subset [\bar{m}]\setminus \bar{I} \subset [\bar{m}]\setminus \bar{\sigma} \in \hat{K}.
  $$
  Since $\hat{K}$ is a simplicial complex, we have $\bar{J} \in \hat{K}$, and then $\hat{K}\vert_{\bar{J}}$ is a $(\left\vert \bar{J}\right\vert - 1)$-simplex.
  Therefore, either $K\vert_I$ or $\hat{K}\vert_{\bar{J}}$ is always a simplex.
  
  It is well-known~\cite{Whitehead1978} that the simplicial join of any space with a contractible space is contractible.
  Hence, we conclude that~$\Bier(K)\vert_{I\sqcup\bar{J}} = K\vert_{I} \ast \hat{K}\vert_{\bar{J}}$ is contractible.
\end{proof}

For any simplicial complex~$\Gamma$ and subsets~$I_0, J_0$ of its vertex set, we have~$\Gamma\vert_{I_0} \cap \Gamma\vert_{J_0} = \Gamma\vert_{I_0 \cap J_0}$ as sets.
Then it follows immediately that
\begin{equation}\label{eq:full-inter}
  \Bier(K)\vert_{I_1 \sqcup \bar{J}_1}\cap \Bier(K)\vert_{I_2 \sqcup \bar{J}_2} = \Bier(K)\vert_{(I_1 \cap I_2) \sqcup (\bar{J}_1 \cap \bar{J}_2)},
\end{equation}
where $I_1, I_2, J_1, J_2$ are subsets of~$[m]$.
Moreover, for any $I_1\subset I_2$ and $J_1 \subset J_2$, we have
\begin{equation}\label{eq:inc1}
  \Bier(K)\vert_{I_1 \sqcup \bar{J}_1} \subset \Bier(K)\vert_{I_2 \sqcup \bar{J}_2}
\end{equation}
as a subcomplex.

For each~$\sigma \in K\vert_{I \cap J}$, define
\begin{equation}\label{eq:cover}
  U_{\sigma} := K\vert_{(I\setminus (I\cap J)) \cup \sigma} \ast \hat{K}\vert_{\bar{J}\setminus \bar{\sigma}}.
\end{equation}
Note that~$(I\setminus (I\cap J)) \cup\sigma$ and $J\setminus\sigma$ are disjoint for each $\sigma\subseteq I\cap J$.
From~\eqref{eq:Bierjoin}, $U_\sigma$ can be also regarded as a full subcomplex
\begin{equation}\label{eq:cov_Bier}
    U_\sigma =\Bier(K)\vert_{((I\setminus (I\cap J)) \cup \sigma) \sqcup (\bar{J}\setminus \bar{\sigma})}
\end{equation}
of~$\Bier(K)$.

\begin{lemma}\label{lem:cover}
  Let $K$ be a simplicial complex on $[m]$ and $I, J$ subsets of $[m]$.
  Then, 
  $$
  \Bier(K)\vert_{I \sqcup \bar{J}}=
  \bigcup_{\sigma \in K\vert_{I\cap J}} U_\sigma.
  $$
\end{lemma}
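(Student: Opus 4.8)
The plan is to prove the set equality $\Bier(K)\vert_{I \sqcup \bar{J}} = \bigcup_{\sigma \in K\vert_{I\cap J}} U_\sigma$ by double inclusion, working directly from the definition of the deleted join. Recall that $\Bier(K)\vert_{I\sqcup\bar{J}} = K\vert_I \ast_\Delta \hat{K}\vert_{\bar{J}}$, so a typical face has the form $\alpha \sqcup \bar{\beta}$ where $\alpha \in K\vert_I$, $\bar{\beta} \in \hat{K}\vert_{\bar{J}}$, and $\bar{\alpha}\cap\bar{\beta} = \emptyset$, i.e. $\alpha \cap \beta = \emptyset$ as subsets of $[m]$. The reverse inclusion ($\supseteq$) is the easy direction and I would dispatch it first: each $U_\sigma$ is itself a full subcomplex of $\Bier(K)$ by \eqref{eq:cov_Bier}, and since $((I\setminus(I\cap J))\cup\sigma) \subseteq I$ together with $(\bar{J}\setminus\bar{\sigma}) \subseteq \bar{J}$, the containment $U_\sigma \subseteq \Bier(K)\vert_{I\sqcup\bar{J}}$ follows immediately from the monotonicity property \eqref{eq:inc1}. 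Taking the union over all $\sigma$ preserves the inclusion.

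The substantive direction is $(\subseteq)$. Given a face $\alpha \sqcup \bar{\beta} \in \Bier(K)\vert_{I\sqcup\bar{J}}$, I must produce a single $\sigma \in K\vert_{I\cap J}$ with $\alpha \sqcup \bar{\beta} \in U_\sigma$. The natural candidate is $\sigma := \alpha \cap J$. First I would verify $\sigma \in K\vert_{I\cap J}$: since $\alpha \in K\vert_I$ we have $\alpha \subseteq I$, so $\sigma = \alpha \cap J \subseteq I \cap J$; and $\sigma \subseteq \alpha \in K$ forces $\sigma \in K$ because $K$ is a simplicial complex, whence $\sigma \in K\vert_{I\cap J}$. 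Next I would check $\alpha \in K\vert_{(I\setminus(I\cap J))\cup\sigma}$: clearly $\alpha \in K$, and I claim $\alpha \subseteq (I\setminus(I\cap J))\cup\sigma$. Indeed any $x \in \alpha \subseteq I$ either lies in $J$, in which case $x \in \alpha\cap J = \sigma$, or lies outside $J$, in which case $x \in I\setminus(I\cap J)$; this is the key bookkeeping step. Then I would check $\bar{\beta} \in \hat{K}\vert_{\bar{J}\setminus\bar{\sigma}}$: we have $\bar{\beta} \in \hat{K}$ and $\bar{\beta}\subseteq\bar{J}$, so it remains to see $\beta \cap \sigma = \emptyset$, which holds because $\sigma \subseteq \alpha$ and $\alpha\cap\beta = \emptyset$ by the deleted-join condition. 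Together these give $\alpha\sqcup\bar{\beta} \in K\vert_{(I\setminus(I\cap J))\cup\sigma}\ast \hat{K}\vert_{\bar{J}\setminus\bar{\sigma}} = U_\sigma$, using that the two underlying vertex sets are disjoint so that the ordinary join equals the relevant face set.

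I do not anticipate a genuine obstacle here; the proof is a careful unwinding of definitions, and the only point requiring attention is confirming that the chosen $\sigma = \alpha\cap J$ simultaneously satisfies all three membership conditions defining $U_\sigma$. The mildly delicate part is the set-theoretic identity $\alpha \subseteq (I\setminus(I\cap J))\cup(\alpha\cap J)$ used to place $\alpha$ in the first join factor, and the verification that the disjointness hypothesis $\alpha\cap\beta=\emptyset$ transfers to $\sigma\cap\beta=\emptyset$ so that $\bar{\beta}$ survives in the restricted Alexander dual. Since the product form of $U_\sigma$ in \eqref{eq:cover} is an ordinary simplicial join (the two index sets $(I\setminus(I\cap J))\cup\sigma$ and $J\setminus\sigma$ being disjoint, as already noted after \eqref{eq:cover}), once the three conditions are in hand the conclusion $\alpha\sqcup\bar{\beta}\in U_\sigma$ is immediate from the definition of the join.
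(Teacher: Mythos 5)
Your proposal is correct and follows essentially the same route as the paper's own proof: both directions are handled identically, and your choice $\sigma=\alpha\cap J$ coincides with the paper's $\sigma'=\alpha\cap(I\cap J)$ since $\alpha\subseteq I$. The verification of the three membership conditions matches the paper's argument step for step.
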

\begin{proof}
  To begin, by~\eqref{eq:inc1} and~\eqref{eq:cov_Bier}, we have
  $$
  U_\sigma\subset \Bier(K)\vert_{I \sqcup \bar{J}} \mbox{ as a subcomplex,}
  $$
  for each $\sigma\in K\vert_{I\cap J}.$
  Next, we assume that~$\sigma \sqcup \bar{\tau} \in \Bier(K)\vert_{I \sqcup \bar{J}}$, where~$\sigma \in K\vert_I$, $\bar{\tau} \in \hat{K}\vert_{\bar{J}}$, and $\sigma \cap \tau = \emptyset$.
  Define $\sigma' \in K\vert_{I\cap J}$ by
  $$
  \sigma' = \sigma \cap (I\cap J).
  $$
  Then $\sigma$ can be separated into two parts as
  $$
  \sigma = \big(\sigma\cap (I\setminus (I\cap J)) \big) \sqcup \sigma', 
  $$
  and it follows that~$\sigma \in K\vert_{(I \setminus (I\cap J)) \cup \sigma'}$.
  Since~$\sigma$ and~$\tau$ are disjoint, $\sigma' \cap \tau = \emptyset$, and hence~$\bar{\tau} \in \hat{K}\vert_{\bar{J}\setminus \bar{\sigma}'}$.
  Therefore, we conclude that~$\sigma \sqcup \bar{\tau} \in U_{\sigma'}$,
  as desired.
\end{proof}

\begin{example}
    Consider the simplicial complex~$K$ on~$\{1,2,3,4\}$ in Example~\ref{ex:3agt}.
    Let~$I = J = \{1,2\}$.
    Then~$K\vert_{I \cap J} = \{\emptyset,\{1\},\{2\},\{1,2\}\}$.
    We obtain that
    $$
    U_{\emptyset} = \hat{K}_{\{\bar{1},\bar{2}\}} , U_{\{1\}} = K\vert_{\{1\}}\ast_{\Delta}  \hat{K}\vert_{\{\bar{2}\}}, U_{\{2\}} = K\vert_{\{2\}} \ast_\Delta \hat{K}\vert_{\{\bar{1}\}}, \text{ and }  U_{\{1,2\}} = K\vert_{\{1,2\}}.
    $$
    Hence, the full subcomplex~$\Bier(K)\vert_{\{1,2,\bar{1},\bar{2}\}}$ is covered by~$\{U_\sigma\}_{\sigma \in K\vert_{I\cap J}}$ according to Lemma~\ref{lem:cover}.
    See Figure~\ref{sec3:figure1} for details.
\begin{figure}
  \begin{tikzpicture}
        \node[left] at (-2.7,1) {\large{$\Bier(K)\vert_{I \sqcup \bar{I}}$}};
        \foreach \x/\y/\num in {-3/0/1, -2/-2/2, -4/-2/3} {
            \fill (\x, \y) circle (1.5pt);
        }
        \fill (-1,0) circle (1.5pt);
        \node at (-3,0.3) {$1$};
        \node at (-4,-2.3) {$2$};
        \node at (-2,-2.3) {$\bar{1}$};
        \node at (-1,0.3) {$\bar{2}$};
        
        \draw (-3,0) -- (-4,-2); 
        \draw (-2, -2) -- (-4,-2); 
        \draw (-2,-2) -- (-1,0);
        \draw (-1,0) -- (-3,0); 

        \draw[->, thick] (-1.1,-1) -- (-0.1,-1);

        \draw[blue] (1,0) -- (0.33,-2);
        \node at (0.3, -1) {\rotatebox{77}{$K\vert_{\{1,2\}}$}};
        \draw[red] (2,0) -- (3.33,0);
        \node at (2.66, 1) {$K\vert_{\{1\}}\ast_\Delta \hat{K}\vert_{\{\bar{2}\}}$};
        \draw[green] (2.66,-2) -- (1.33,-2);
        \node at (1.99, -3) {$K\vert_{\{2\}}\ast_\Delta \hat{K}\vert_{\{\bar{1}\}}$};
        \draw[yellow] (4.33,0) -- (3.66,-2);
        \node at (4.33, -1) {\rotatebox{-103}{$\hat{K}\vert_{\{\bar{1},\bar{2}\}}$}};
        \foreach \x/\y/\num in {2/0/1, 2.66/-2/3, 1.33/-2/4, 3.33/0/2} {
            \fill (\x, \y) circle (1.5pt);
        }
        \node at (2,0.3) {$1$};
        \node at (1.33,-2.3) {$2$};
        \node at (2.66,-2.3) {$\bar{1}$};
        \node at (3.33,0.3) {$\bar{2}$};
        \node at (1,0.3) {$1$};
        \node at (0.33,-2.3) {$2$};
        \node at (3.66,-2.3) {$\bar{1}$};
        \node at (4.33,0.3) {$\bar{2}$};
        
        \fill (1,0) circle (1.5pt);
        \fill (0.33,-2) circle (1.5pt);
        \fill (4.33,0) circle (1.5pt);
        \fill (3.66,-2) circle (1.5pt);

    \end{tikzpicture}
  \caption{A cover of a full subcomplex}
  \label{sec3:figure1}
\end{figure}  
\end{example}

  For a cover~$\cU = \{U_{i}\}_{i\in \cI}$ of a simplicial complex~$\Gamma$, the \emph{nerve} $\cN(\cU)$ of $\cU$ is the simplicial complex on $\cI$, defined by
  $$
  \cN(\cU)=\{\sigma \subset \mathcal{I} : \bigcap\limits_{i\in\sigma} U_i \neq \emptyset\}.
  $$
 
 The following lemma is known as the \emph{nerve lemma}.
  \begin{lemma}\label{lem:nerve}\cite{leray1950}
  Let $\Gamma$ be a simplicial complex with a cover~$\cU = \{U_{i}\}_{i\in \cI}$.
  If all finite intersections of the covering sets~$U_i \in \cU$ are contractible, then the simplicial complex~$\Gamma$ is homotopy equivalent to the nerve~$\cN(\cU)$ of~$\cU$.
  \end{lemma}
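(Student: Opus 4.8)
The plan is to realize both $\Gamma$ and the geometric realization of $\cN(\cU)$ as homotopy colimits over a single indexing poset, and then connect them by two homotopy equivalences. Throughout I assume, as holds in our application (Lemma~\ref{lem:cover}) and as is implicit in the statement, that each $U_i$ is a subcomplex of $\Gamma$ and that $\cI$ is finite; this guarantees that every inclusion among the intersections $U_\sigma := \bigcap_{i\in\sigma}U_i$ is a cofibration. Let $P$ be the poset of nonempty $\sigma\subseteq\cI$ with $U_\sigma\neq\emptyset$, ordered by inclusion. By the definition of the nerve, $P$ is precisely the poset of nonempty faces of $\cN(\cU)$, so its order complex satisfies $|P|\cong|\operatorname{sd}\cN(\cU)|\cong|\cN(\cU)|$, where $\operatorname{sd}$ denotes barycentric subdivision.

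First I would form the diagram $D\colon P^{\mathrm{op}}\to\mathbf{Top}$ sending $\sigma\mapsto U_\sigma$, with structure maps the inclusions $U_\tau\hookrightarrow U_\sigma$ for $\sigma\subseteq\tau$, and consider $\operatorname{hocolim}D$. Two canonical maps are available. On one side, the inclusions $U_\sigma\hookrightarrow\Gamma$ assemble into a projection $p\colon\operatorname{hocolim}D\to\Gamma$; the \emph{projection lemma} for covers by subcomplexes asserts that $p$ is a homotopy equivalence, since $\Gamma=\bigcup_i U_i$ is the ordinary colimit of $D$ and all inclusions are cofibrations, so that the natural map from the homotopy colimit to the colimit is an equivalence. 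On the other side, collapsing each $U_\sigma$ to a point gives a map $D\to\underline{*}$ to the constant diagram; as every $U_\sigma$ is contractible, this is an objectwise equivalence, and by homotopy invariance of $\operatorname{hocolim}$ it induces an equivalence $\operatorname{hocolim}D\to\operatorname{hocolim}\underline{*}=|P^{\mathrm{op}}|=|P|\cong|\cN(\cU)|$. Composing, we obtain $\Gamma\simeq|\cN(\cU)|$, and the contractibility hypothesis enters only through this second, easy, equivalence.

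The main obstacle is therefore the \emph{projection lemma}, namely $\operatorname{hocolim}D\simeq\Gamma$. I would prove it by induction on $|\cI|$. Writing $\Gamma'=U_1\cup\cdots\cup U_{n-1}$, the pair $(\Gamma',U_n)$ is an excisive triad of subcomplexes with $\Gamma'\cap U_n=\bigcup_{i<n}(U_i\cap U_n)$, so $\Gamma$ is the homotopy pushout of $\Gamma'\leftarrow\Gamma'\cap U_n\to U_n$. The inductive hypothesis applies to the three smaller covers, of $\Gamma'$, of $U_n$, and of $\Gamma'\cap U_n$, whose nerves are respectively the deletion, the star, and the link of the vertex $n$ in $\cN(\cU)$; these assemble into a pushout of nerves recovering $\cN(\cU)$. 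The delicate step, requiring the most care, is to choose the three homotopy equivalences \emph{compatibly}, so that the pushout square of spaces maps to the pushout square of nerves, after which the gluing lemma for homotopy pushouts closes the induction. Since the result is classical, in the paper I would simply invoke \cite{leray1950}; the sketch above is recorded only for completeness.
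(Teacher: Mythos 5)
The paper offers no proof of this lemma at all: it is quoted as the classical nerve theorem with the citation to Leray and then used as a black box in the proof of Theorem~\ref{thm:contrac}, so there is no in-paper argument to measure your proposal against; the only question is whether your sketch is sound, and it is --- it is the standard homotopy-colimit proof (projection lemma, plus the objectwise collapse of the diagram $\sigma\mapsto U_\sigma$ to the constant point diagram over the poset $P$ of nonempty intersections, whose order complex is the barycentric subdivision of $\cN(\cU)$). Two remarks. First, your inductive step for the projection lemma invokes the deletion, star and link of the vertex $n$ in $\cN(\cU)$; that decomposition belongs to the \emph{nerve} side of the comparison (it is how one proves the nerve lemma directly by induction, matching the homotopy pushout $\Gamma'\leftarrow\Gamma'\cap U_n\to U_n$ against $\cN(\cU)\vert_{\cI\setminus\{n\}}\leftarrow\Lk(n,\cN(\cU))\to\St(n,\cN(\cU))$), whereas the projection lemma itself only compares $\operatorname{hocolim}D$ with $\operatorname{colim}D=\Gamma$ and never sees the nerve; you should either keep these two inductions separate or run a single induction proving the nerve lemma outright and drop the homotopy-colimit scaffolding. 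Second, the lemma as stated in the paper requires \emph{every} finite intersection to be contractible, hence nonempty, which forces $\cN(\cU)$ to be a full simplex --- and that degenerate case is exactly what is exploited in the proof of Theorem~\ref{thm:contrac}; your formulation, which imposes contractibility only on the nonempty intersections, establishes the stronger standard statement and subsumes the one in the paper. Neither point is a gap; for the purposes of this paper the citation suffices, and your sketch would serve as a legitimate self-contained substitute after separating the two inductive arguments.
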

  
Let us consider
$$
\cU := \{U_{\sigma}\}_{\sigma \in K_{I \cap J}},
$$
where~$U_{\sigma}$ is defined in~\eqref{eq:cover}.
By Lemma~\ref{lem:cover}, $\cU$ is a cover of a full subcomplex~$\Bier(K)\vert_{I\sqcup \bar{J}}$.
Using this cover, we prove Theorem~\ref{thm:contrac}.
  
\begin{theorem}\label{thm:contrac}
  Let $K$ be a simplicial complex on $[m] =\{1,\ldots,m\}$, and $I,J$ nonempty subsets of $[m]$.
  If $I\not\subset J$ and $I\not\supset J$, then the full subcomplex $\Bier(K)\vert_{I\sqcup \bar{J}}$ is contractible.
\end{theorem}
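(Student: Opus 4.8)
The plan is to apply the nerve lemma (Lemma~\ref{lem:nerve}) to the cover $\cU = \{U_\sigma\}_{\sigma \in K\vert_{I \cap J}}$ of $\Bier(K)\vert_{I \sqcup \bar{J}}$ furnished by Lemma~\ref{lem:cover}, and to show that the resulting nerve is a full simplex. The hypotheses $I\not\subset J$ and $I\not\supset J$ enter precisely as the two nonemptiness facts $I\setminus J\neq\emptyset$ and $J\setminus I\neq\emptyset$, which will be used throughout.

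First I would verify that each $U_\sigma$ is contractible. Writing $A_\sigma = (I\setminus(I\cap J))\cup\sigma$ and $B_\sigma = J\setminus\sigma$, these two index sets are disjoint, and $A_\sigma\supseteq I\setminus J$ and $B_\sigma\supseteq J\setminus I$ are both nonempty. Hence Lemma~\ref{lem:simplex}, applied to the pair $(A_\sigma,B_\sigma)$, shows directly that $U_\sigma = \Bier(K)\vert_{A_\sigma\sqcup\bar{B}_\sigma}$ is contractible.

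The main step is to identify arbitrary finite intersections of cover sets. Given $\sigma_1,\dots,\sigma_k\in K\vert_{I\cap J}$, I would use the description~\eqref{eq:cov_Bier} of each $U_{\sigma_i}$ as a full subcomplex together with the intersection formula~\eqref{eq:full-inter}, iterated over the $k$ sets, to obtain
\[
  \bigcap_{i=1}^{k} U_{\sigma_i} = \Bier(K)\vert_{A\sqcup\bar{B}}, \qquad A = (I\setminus(I\cap J))\cup\textstyle\bigcap_i\sigma_i, \quad B = J\setminus\textstyle\bigcup_i\sigma_i.
\]
A short set-theoretic check shows that $A$ and $B$ are disjoint, since $A\cap B$ is contained in $(\bigcap_i\sigma_i)\setminus(\bigcup_i\sigma_i)=\emptyset$, and that $A\supseteq I\setminus J$ and $B\supseteq J\setminus I$ remain nonempty. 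Applying Lemma~\ref{lem:simplex} once more to the pair $(A,B)$ shows that this intersection is again contractible; in particular, one of its two join factors is a nonempty simplex by~\eqref{eq:Bierjoin}, so its geometric realization is nonempty.

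Since every finite intersection of cover sets is nonempty, the nerve $\cN(\cU)$ is the full simplex on the vertex set $K\vert_{I\cap J}$, and is therefore contractible. As all finite intersections are also contractible, Lemma~\ref{lem:nerve} yields $\Bier(K)\vert_{I\sqcup\bar{J}}\simeq\cN(\cU)\simeq\mathrm{pt}$, which is the desired conclusion. The one point requiring genuine care is the intersection computation above: one must confirm both the identity of the sets $A$ and $B$ and that disjointness together with nonemptiness survive the intersection. This is exactly where the incomparability of $I$ and $J$ is indispensable, since without it one of $A$ or $B$ could become empty and Lemma~\ref{lem:simplex} would no longer apply.
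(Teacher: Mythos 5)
Your proposal is correct and follows essentially the same route as the paper: covering $\Bier(K)\vert_{I\sqcup\bar{J}}$ by the sets $U_\sigma$ from Lemma~\ref{lem:cover}, identifying each finite intersection as $\Bier(K)\vert_{\mathbf{I}\sqcup\bar{\mathbf{J}}}$ with $\mathbf{I}\supseteq I\setminus J$ and $\mathbf{J}\supseteq J\setminus I$ nonempty and disjoint, invoking Lemma~\ref{lem:simplex} for contractibility and nonemptiness, and concluding via the nerve lemma that the complex is homotopy equivalent to a full simplex. The only difference is cosmetic: you treat the total intersection as a special case of the general finite-intersection computation, whereas the paper computes it separately first.
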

\begin{proof}
Since $I\not\subset J$ and $I\not\supset J$, both $I\setminus (I\cap J)$ and $J\setminus (I\cap J)$ are nonempty and disjoint.
Then we have
\begin{align*}
  \bigcap_{\sigma\in K\vert_{I\cap J}} U_{\sigma} 
  &= \bigcap_{\sigma\in K\vert_{I\cap J}} \Bier(K)\vert_{((I\setminus (I\cap J)) \cup \sigma) \sqcup (\bar{J}\setminus \bar{\sigma})}
    && \text{(by~\eqref{eq:cov_Bier})} \\
  &= \Bier(K)\vert_{(I\setminus(I\cap J)) \sqcup (\bar{J}\setminus(\bar{I}\cap\bar{J}))}
    && \text{(by repeated application of~\eqref{eq:full-inter})} \\
  &= K\vert_{I\setminus(I\cap J)} \ast \hat{K}\vert_{\bar{J}\setminus(\bar{I}\cap\bar{J})}
    && \text{(by~\eqref{eq:Bierjoin})}.
\end{align*}
  From Lemma~\ref{lem:simplex}, we observe that either $K\vert_{I\setminus(I\cap J)}$ or $\hat{K}\vert_{\bar{J}\setminus(\bar{I}\cap \bar{J})}$ is a simplex.
  Hence,
  $$
  \bigcap_{\sigma\in K\vert_{I\cap J}} U_{\sigma} \neq \emptyset,
  $$
  which is implies that the nerve~$\cN(\cU)$ is a simplex.
  
  Let~$\sigma_1,\ldots,\sigma_\ell$ be simplices in~$K\vert_{I\cap J}$.
  Since both
  $$
  \mathbf{I} := (I\setminus (I\cap J)) \cup (\sigma_1 \cap \cdots \cap \sigma_\ell)\quad \text{and} \quad \mathbf{J} := J\setminus (\sigma_1 \cup \cdots \cup \sigma_\ell)
  $$
  are nonempty and disjoint, we obtain that
  $$
  \bigcap_{1 \leq i \leq \ell} U_{\sigma_i} = \Bier(K)_{\mathbf{I} \sqcup \bar{\mathbf{J}}}
  $$
  is contractible, by Lemma~\ref{lem:simplex}.
  Therefore, by Lemma~\ref{lem:nerve}, $\Bier(K)\vert_{I\sqcup\bar{J}}$ is homotopy equivalent to~$\cN(\cU)$.
\end{proof}

\begin{example}
  Recall the simplicial complex~$K$ from Example~\ref{ex:3agt}.
  Consider
  $$
  I_1 = \{2,4\}, J_1=\{1,3\} , I_2 = \{2,3,4\}, \text{ and } J_2 = \{1,2,3\}.
  $$
  By Theorem~\ref{thm:contrac}, the full subcomplexes $\Bier(K)\vert_{I_1 \sqcup \bar{J}_1}$ and $\Bier(K)\vert_{I_2 \sqcup \bar{J}_2}$ are both contractible.
  See Figure~\ref{sec3:figure2}.
\begin{figure}
  \begin{tikzpicture}
        \node[left] at (6.6,1) {\large{$\Bier(K)$}};
        \filldraw[gray!20] (7.5, 0.5) -- (6.25, -1) -- (8, -1) -- cycle;
    \filldraw[gray!20] (7.5, 0.5) -- (8, -1) -- (8.75, -0.5) -- cycle;
    \filldraw[gray!20] (7.5, -2) -- (6.25, -1) -- (8, -1) -- cycle;
    \filldraw[gray!20] (7.5, -2) -- (8, -1) -- (8.5, -1.5) -- cycle;
    \filldraw[gray!20] (8.75, -0.5) -- (8, -1) -- (8.5, -1.5) -- cycle;
    \filldraw[gray!20] (7.5, 0.5) -- (6.25, -1) -- (7, -0.5) -- cycle;
    \filldraw[gray!20] (7.5, 0.5) -- (8.75, -0.5) -- (7, -0.5) -- cycle;
    \filldraw[gray!20] (7.5, -2) -- (7, -0.5) -- (6.25, -1) -- cycle;
    \filldraw[gray!20] (7.5, -2) -- (7, -0.5) -- (8.75, -0.5) -- cycle;
        \fill (7, -0.5) circle (1.5pt);
        \fill (6.25, -1) circle (1.5pt);
        \fill (8, -1) circle (1.5pt);
        \fill (8.75, -0.5) circle (1.5pt);
        \fill (7.5, 0.5) circle (1.5pt);
        \fill (7.5, -2) circle (1.5pt);
        \fill (8.5, -1.5) circle (1.5pt);
        \draw (6.5, 0) circle (1.5pt);

        \node at (7.5, 0.8) {1};
        \node at (7.2,-0.3) {2};
        \node at (6,-1) {3};
        \node at (8.7, -1.6) {4};
        \node at (7.5,-2.3) {$\bar{1}$};
        \node at (8.1, -0.7) {$\bar{2}$};
        \node at (8.9,-0.3) {$\bar{3}$};
        \node at (6.5, 0.3) {$\bar{4}$};

        \draw[dotted] (7.5, 0.5) -- (7, -0.5);
        \draw (7.5, 0.5) -- (6.25, -1);
        \draw (7.5, 0.5) -- (8,-1);
        \draw (7.5, 0.5) -- (8.75, -0.5);
        
        \draw[dotted] (7, -0.5) -- (6.25, -1);
        \draw (6.25, -1) -- (8, -1);
        \draw (8, -1) -- (8.75, -0.5);
        \draw[dotted] (8.75, -0.5) -- (7, -0.5);
        
        \draw[dotted] (7.5, -2) -- (7, -0.5);
        \draw (7.5, -2) -- (6.25, -1);
        \draw (7.5, -2) -- (8,-1);
        \draw[dotted] (7.5, -2) -- (8.75, -0.5);
        
        \draw (8.5, -1.5) -- (8.75, -0.5);
        \draw (8.5, -1.5) -- (8, -1);
        \draw (8.5, -1.5) -- (7.5, -2);

        \fill[fill=gray!50] (12,0)--(11,-2)--(13,-2);
        \fill[fill=gray!50] (12,0)--(13,-2)--(14,0);
        \node[left] at (12,1) {\large{$\Bier(K)\vert_{I_1 \sqcup \bar{J}_1}$}};
        \foreach \x/\y/\num in {12/0/1, 13/-2/3, 11/-2/2} {
            \fill (\x, \y) circle (1.5pt);
        }
        \fill (14,0) circle (1.5pt);
        \node at (12,0.3) {$\bar{3}$};
        \node at (11,-2.3) {2};
        \node at (13,-2.3) {$\bar{1}$};
        \node at (14,0.3) {4};
        
        \draw (12,0) -- (11,-2);
        \draw (12,0) -- (13,-2);
        \draw (13,-2) -- (11,-2);
        
        \draw (12,0)--(14,0);
        \draw (13,-2)--(14,0);

        \node[left] at (17.6,1) {\large{$\Bier(K)\vert_{I_2 \sqcup \bar{J}_2}$}};
        \filldraw[gray!20] (17, -0.5)--(16.25,-1)--(17.5,-2)--cycle;
        \filldraw[gray!20] (17,-0.5)--(18.75,-0.5)--(17.5,-2)--cycle;
    \filldraw[gray!50] (17.5, -2) -- (16.25, -1) -- (18, -1) -- cycle;
    \filldraw[gray!50] (17.5, -2) -- (18, -1) -- (18.5, -1.5) -- cycle;
    \filldraw[gray!50] (18.75, -0.5) -- (18, -1) -- (18.5, -1.5) -- cycle;
        \fill (17, -0.5) circle (1.5pt);
        \fill (16.25, -1) circle (1.5pt);
        \fill (18, -1) circle (1.5pt);
        \fill (18.75, -0.5) circle (1.5pt);
        \fill (17.5, -2) circle (1.5pt);
        \fill (18.5, -1.5) circle (1.5pt);

        \node at (17.2,-0.3) {2};
        \node at (16,-1) {3};
        \node at (18.7, -1.6) {4};
        \node at (17.5,-2.3) {$\bar{1}$};
        \node at (18.1, -0.7) {$\bar{2}$};
        \node at (18.9,-0.3) {$\bar{3}$};

        \draw (17, -0.5) -- (16.25, -1);
        \draw (16.25, -1) -- (18, -1);
        \draw (18, -1) -- (18.75, -0.5);
        \draw (18.75, -0.5) -- (17, -0.5);
        
        \draw[dotted] (17.5, -2) -- (17.165, -1);
        \draw (17.165, -1) -- (17,-0.5);
        \draw (17.5, -2) -- (16.25, -1);
        \draw (17.5, -2) -- (18,-1);
        \draw[dotted] (17.5, -2) -- (18.75, -0.5);
        
        \draw (18.5, -1.5) -- (18.75, -0.5);
        \draw (18.5, -1.5) -- (18, -1);
        \draw (18.5, -1.5) -- (17.5, -2);
    \end{tikzpicture}
    \caption{Full subcomplexes $\Bier(K)_{I \sqcup \bar{J}}$}
    \label{sec3:figure2}
    \end{figure}
\end{example}

\section{$\Bier(K)\vert_{I\sqcup\bar{J}}$ when $I\subsetneq J$ or $I\supsetneq J$}\label{sec:inclu}

By the commutativity of deleted join, $\Bier(K)$ coincides with $\Bier(\hat{K})$, that is,
$$
\Bier(K)\vert_{I\sqcup\bar{J}} = \Bier(\hat{K})\vert_{\bar{J}\sqcup I}.
$$
Hence, in this section, we may restrict to the case~$J \subsetneq I$.

Let $\Gamma$ be a simplicial complex on $S$ and $\sigma$ a simplex in~$\Gamma$.
The \emph{link} $\Lk(\sigma,\Gamma)$ of $\sigma$ in $\Gamma$ is defined as
$$
\Lk(\sigma, \Gamma) = \{\tau \in \Gamma : \sigma \cap \tau =\emptyset, \sigma \cup \tau \in \Gamma\}.
$$

\begin{lemma}~\label{lem:lk-lk}
  Let $\Gamma$ be a simplicial complex and $\sigma$ a simplex in $\Gamma$.
  If $\tau$ is a simplex in~$\Lk(\sigma,\Gamma)$, then
  $$
  \Lk(\tau, \Lk(\sigma,\Gamma)) = \Lk(\sigma \sqcup \tau, \Gamma).
  $$
\end{lemma}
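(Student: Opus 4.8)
The plan is to prove the set equality by verifying the two inclusions directly, unwinding the definition of link on each side. Throughout I will use the disjointness $\sigma \cap \tau = \emptyset$ and the membership $\sigma \cup \tau \in \Gamma$ supplied by the hypothesis $\tau \in \Lk(\sigma, \Gamma)$, together with the defining property that $\Gamma$ is closed under passing to subsets.

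For the inclusion $\Lk(\tau, \Lk(\sigma,\Gamma)) \subseteq \Lk(\sigma \sqcup \tau, \Gamma)$, I would take $\rho \in \Lk(\tau, \Lk(\sigma,\Gamma))$ and read off what this means: the condition $\rho \in \Lk(\sigma,\Gamma)$ gives $\sigma \cap \rho = \emptyset$, while $\tau \cap \rho = \emptyset$ and $\tau \cup \rho \in \Lk(\sigma,\Gamma)$ give in particular $\sigma \cup \tau \cup \rho \in \Gamma$. Combining $\sigma \cap \rho = \emptyset$ with $\tau \cap \rho = \emptyset$ yields $(\sigma \cup \tau) \cap \rho = \emptyset$, and together with $\sigma \cup \tau \cup \rho \in \Gamma$ this is exactly the statement $\rho \in \Lk(\sigma \sqcup \tau, \Gamma)$.

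For the reverse inclusion, I would take $\rho \in \Lk(\sigma \sqcup \tau, \Gamma)$, so that $(\sigma \cup \tau) \cap \rho = \emptyset$ and $\sigma \cup \tau \cup \rho \in \Gamma$; splitting the disjointness gives $\sigma \cap \rho = \emptyset$ and $\tau \cap \rho = \emptyset$ separately. The one step that genuinely uses the simplicial-complex axiom is to observe that $\sigma \cup \rho$ is a subset of $\sigma \cup \tau \cup \rho \in \Gamma$, hence $\sigma \cup \rho \in \Gamma$; this, with $\sigma \cap \rho = \emptyset$, places $\rho$ in $\Lk(\sigma,\Gamma)$. It remains to check $\tau \cup \rho \in \Lk(\sigma,\Gamma)$: the disjointness $\sigma \cap (\tau \cup \rho) = \emptyset$ follows by combining $\sigma \cap \tau = \emptyset$ (from the hypothesis) with $\sigma \cap \rho = \emptyset$, and $\sigma \cup (\tau \cup \rho) \in \Gamma$ is just the given membership. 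Since also $\tau \cap \rho = \emptyset$, we conclude $\rho \in \Lk(\tau, \Lk(\sigma,\Gamma))$.

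There is no real obstacle beyond careful bookkeeping of the disjointness and membership conditions; the only substantive ingredient is the downward closure of $\Gamma$, used once in the reverse inclusion to descend from $\sigma \cup \tau \cup \rho \in \Gamma$ to $\sigma \cup \rho \in \Gamma$.
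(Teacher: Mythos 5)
Your proof is correct and follows essentially the same route as the paper's: both argue by double inclusion, unwinding the definition of the link and using the identity $\eta \sqcup (\sigma \sqcup \tau) = (\eta \sqcup \tau) \sqcup \sigma$. Your version is merely more explicit about the disjointness bookkeeping and the single use of downward closure, which the paper leaves implicit in its $\sqcup$ notation.
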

\begin{proof}
    Suppose that $\eta \in \Lk(\tau, \Lk(\sigma,\Gamma))$.
    It follows that~$\eta \sqcup \tau \in \Lk(\sigma,\Gamma)$, and hence
    $$
    \eta \sqcup (\sigma\sqcup\tau) = (\eta \sqcup \tau)  \sqcup \sigma \in \Gamma.
    $$
    Therefore, $\eta \in \Lk(\sigma\sqcup\tau, \Gamma)$.
    
    Next, suppose that $\eta \in \Lk(\sigma\sqcup\tau, \Gamma)$.
    Then we obtain that
    $$
    (\eta \sqcup \tau) \sqcup \sigma = \eta \sqcup (\sigma\sqcup\tau) \in \Gamma,
    $$
    which implies that~$\eta \sqcup \tau \in \Lk(\sigma,\Gamma)$.
    We conclude that~$\eta \in \Lk(\tau,\Lk(\sigma,\Gamma))$.
\end{proof}

Let $\Gamma$ be a simplicial complex.
The \emph{suspension} $\Sigma\Gamma$ of $\Gamma$ is defined as
$$
\Sigma\Gamma = (v_1 \ast \Gamma) \cup (v_2 \ast \Gamma),
$$
where $v_1$ and $v_2$, called the \emph{apices} of the suspension~$\Sigma\Gamma$, are additional non-adjacent vertices not contained in~$\Gamma$.
We denote the $k$-fold suspension by~$\Sigma^k$. 
In particular, $\Sigma^0 \Gamma = \Gamma$.

For a simplex~$\tau$ in~$\Gamma$, we set
$$
  \Gamma \setminus \tau := \{\eta \in \Gamma : \eta \cap \tau = \emptyset\}.
$$
Note that, for an additional vertex~$v$ not in~$\Gamma$,
\begin{equation}\label{eq:stardelete}
  (v \ast \Gamma)\setminus \tau = v \ast (\Gamma\setminus \tau).
\end{equation}

\begin{proposition}\label{prop:Sigmalink}
Let $\Gamma$ be a finite simplicial complex and $\sigma$ a simplex in~$\Gamma$. Then,
\begin{enumerate}
  \item\label{prop:Sigmalink1}
      $\Lk(\sigma,\Sigma\Gamma) \cong \Sigma \Lk(\sigma,\Gamma)$ as simplicial complexes, and
  \item\label{prop:Sigmalink2} for each $\tau \in \Lk(\sigma,\Gamma)$, $(\Sigma\Lk(\sigma,\Gamma))\setminus \tau \cong \Sigma\Lk(\sigma, \Gamma\setminus \tau)$ as simplicial complexes.
\end{enumerate}
\end{proposition}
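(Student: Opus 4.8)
The plan is to prove both parts by a direct face-by-face computation, classifying the faces of the suspension according to which apex they contain and invoking the identity~\eqref{eq:stardelete}. No homotopy-theoretic input is needed; both assertions are simplicial isomorphisms established by set-theoretic bookkeeping.

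For part~\eqref{prop:Sigmalink1}, I would first record that, since $v_1$ and $v_2$ are non-adjacent, every face of $\Sigma\Gamma=(v_1\ast\Gamma)\cup(v_2\ast\Gamma)$ is of exactly one of the three forms $\rho$, $\{v_1\}\cup\rho$, or $\{v_2\}\cup\rho$ with $\rho\in\Gamma$; in particular the apex-free faces of $\Sigma\Gamma$ are precisely the faces of $\Gamma$. Because $\sigma\subseteq\Gamma$ contains neither apex, an element $\eta\in\Lk(\sigma,\Sigma\Gamma)$ satisfies $\eta\cap\sigma=\emptyset$ and $\eta\cup\sigma\in\Sigma\Gamma$, and splitting into the three cases shows that the apex (if present) is untouched by these conditions, so $\eta$ lies in $\Lk(\sigma,\Gamma)$, in $\{v_1\}\cup\Lk(\sigma,\Gamma)$, or in $\{v_2\}\cup\Lk(\sigma,\Gamma)$ respectively. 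Reassembling these three families gives exactly $(v_1\ast\Lk(\sigma,\Gamma))\cup(v_2\ast\Lk(\sigma,\Gamma))=\Sigma\Lk(\sigma,\Gamma)$, where I identify the apices of the two suspensions; this is the claimed isomorphism.

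For part~\eqref{prop:Sigmalink2}, the key auxiliary fact is the commutation of link with deletion,
$$\Lk(\sigma,\Gamma)\setminus\tau=\Lk(\sigma,\Gamma\setminus\tau),$$
which I would verify by a short computation: since $\tau\in\Lk(\sigma,\Gamma)$ gives $\sigma\cap\tau=\emptyset$, one has $(\rho\cup\sigma)\cap\tau=\rho\cap\tau$, so the two membership conditions coincide (and, as $\sigma\in\Gamma$ with $\sigma\cap\tau=\emptyset$, indeed $\sigma\in\Gamma\setminus\tau$, so the right-hand link is defined). Writing $\Delta=\Lk(\sigma,\Gamma)$, I then compute
$$(\Sigma\Delta)\setminus\tau=\big((v_1\ast\Delta)\cup(v_2\ast\Delta)\big)\setminus\tau=\big((v_1\ast\Delta)\setminus\tau\big)\cup\big((v_2\ast\Delta)\setminus\tau\big),$$
using that passing to faces disjoint from $\tau$ distributes over unions, and apply~\eqref{eq:stardelete} to each factor (legitimate since $\tau\subseteq\Gamma$ contains no apex) to obtain $v_i\ast(\Delta\setminus\tau)$. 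Hence $(\Sigma\Delta)\setminus\tau=\Sigma(\Delta\setminus\tau)$, and combining this with the commutation identity yields $(\Sigma\Lk(\sigma,\Gamma))\setminus\tau\cong\Sigma\Lk(\sigma,\Gamma\setminus\tau)$.

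The arguments are elementary, so there is no deep obstacle; the one point requiring care is the bookkeeping of the two apices, namely ensuring that the conditions defining the link and the deletion never involve $v_1$ or $v_2$, so that~\eqref{eq:stardelete} applies and the apices of the two suspensions may be identified in the isomorphism.
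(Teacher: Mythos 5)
Your proposal is correct and follows essentially the same route as the paper: part~\eqref{prop:Sigmalink1} by splitting the faces of $\Sigma\Gamma=(v_1\ast\Gamma)\cup(v_2\ast\Gamma)$ according to the apex and using that $\sigma$ contains neither $v_1$ nor $v_2$, and part~\eqref{prop:Sigmalink2} by distributing deletion over the union, applying~\eqref{eq:stardelete}, and verifying the commutation $\Lk(\sigma,\Gamma)\setminus\tau=\Lk(\sigma,\Gamma\setminus\tau)$ via $\sigma\cap\tau=\emptyset$. The only differences are cosmetic (you make the three-way case split and the well-definedness of $\Lk(\sigma,\Gamma\setminus\tau)$ explicit, which the paper leaves implicit).
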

\begin{proof}
Whenever a suspension appears in this proof, its two apices are identified with~$v_1$ and~$v_2$.
We compute
\begin{align*}
  \Lk(\sigma,\Sigma\Gamma) & =\{\tau\in\Sigma\Gamma : \sigma\sqcup\tau \in \Sigma\Gamma\} \\
   & = \{\tau\in v_1 \ast \Gamma : \sigma\sqcup\tau \in v_1 \ast \Gamma\} \cup \{\tau\in v_2 \ast \Gamma : \sigma\sqcup\tau \in v_2 \ast \Gamma\} .
\end{align*}
Since $\sigma \in \Gamma$, it contains neither~$v_1$ nor~$v_2$. Thus, for~$i =1,2$,
$$
\{\tau\in v_i \ast \Gamma : \sigma\sqcup\tau \in v_i \ast \Gamma\} = (v_i \ast \{\eta \in \Gamma : \sigma \sqcup \eta \in \Gamma\}) = v_i \ast \Lk(\sigma,\Gamma).
$$
We conclude that
\begin{align*}
  \Lk(\sigma,\Sigma\Gamma)
  &= v_1 \ast \Lk(\sigma,\Gamma) \cup v_2 \ast \Lk(\sigma,\Gamma) \\
   & = \Sigma\Lk(\sigma,\Gamma),
\end{align*}
which confirms that~\eqref{prop:Sigmalink1}.

Now, let~$\tau \in \Lk(\sigma,\Gamma)$.
We have
\begin{align*}
  (\Sigma\Lk(\sigma,\Gamma))\setminus\tau  & = \Big(\big(v_1\ast \Lk(\sigma,\Gamma)\big)\cup \big(v_2\ast\Lk(\sigma,\Gamma)\big)\Big)\setminus\tau\\
   & = \Big(\big(v_1\ast\Lk(\sigma,\Gamma)\big)\setminus\tau\Big)\cup \Big(\big(v_2\ast\Lk(\sigma,\Gamma)\big)\setminus\tau\Big) \\
   & = \Big(v_1\ast\big(\Lk(\sigma,\Gamma)\setminus\tau\big)\Big) \cup \Big(v_2\ast\big(\Lk(\sigma,\Gamma)\setminus\tau\big)\Big) \quad \text{(by~\eqref{eq:stardelete})}\\
   & = \Sigma(\Lk(\sigma,\Gamma)\setminus\tau).
\end{align*}
On the other hand, we compute
\begin{align*}
  \Lk(\sigma,\Gamma)\setminus\tau & = \{\eta \in \Lk(\sigma, \Gamma) : \eta\cap\tau = \emptyset\} \\
   & =\{\eta \in \Gamma : \eta\sqcup\sigma \in \Gamma, \eta\cap\tau = \emptyset\} \\
   & =\{\eta \in \Gamma\setminus\tau : \eta\sqcup\sigma\in \Gamma\setminus\tau\}\\
   & = \Lk(\sigma,\Gamma\setminus\tau).
\end{align*}
Combining the two computations above completes the proof of~\eqref{prop:Sigmalink2}.
\end{proof}

We introduce a key lemma for the proof of Theorem~\ref{thm:4_main}.
\begin{lemma}\label{lemma:delete}
    Let $u$ be a vertex of~$\Gamma$.
    If~$\Gamma \setminus u$ is contractible, then
    $$
    \Gamma \simeq \Sigma\Lk(u,\Gamma).
    $$
\end{lemma}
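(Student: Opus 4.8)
The plan is to realize $\Gamma$ as the union of the closed star of $u$ and the deletion $\Gamma\setminus u$, and then to recognize $\Sigma\Lk(u,\Gamma)$ as the homotopy pushout obtained after contracting both pieces. First I would record the standard set-theoretic identities. Writing $\overline{\St}(u,\Gamma):=\{\eta\in\Gamma:\eta\cup\{u\}\in\Gamma\}$ for the closed star, a direct check gives $\overline{\St}(u,\Gamma)=u\ast\Lk(u,\Gamma)$ (a face of the star containing $u$ is $\{u\}\cup\rho$ with $\rho\in\Lk(u,\Gamma)$, and a face avoiding $u$ is exactly a face of the link), together with
$$
\Gamma=\overline{\St}(u,\Gamma)\cup(\Gamma\setminus u),
$$
since every face of $\Gamma$ either contains $u$ or avoids it, and
$$
\overline{\St}(u,\Gamma)\cap(\Gamma\setminus u)=\Lk(u,\Gamma),
$$
since a face of the star avoiding $u$ is precisely a face of the link.

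Next I would exploit that all three of $\overline{\St}(u,\Gamma)$, $\Gamma\setminus u$, and $\Lk(u,\Gamma)$ are subcomplexes, so the inclusions $\Lk(u,\Gamma)\hookrightarrow\overline{\St}(u,\Gamma)$ and $\Lk(u,\Gamma)\hookrightarrow\Gamma\setminus u$ are cofibrations and $|\Gamma|$ is the homotopy pushout of
$$
|\overline{\St}(u,\Gamma)|\longleftarrow|\Lk(u,\Gamma)|\longrightarrow|\Gamma\setminus u|.
$$
Now $\overline{\St}(u,\Gamma)=u\ast\Lk(u,\Gamma)$ is a cone, hence contractible, and $\Gamma\setminus u$ is contractible by hypothesis. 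By the gluing lemma, replacing both ends of the diagram by a point preserves the homotopy type, so $|\Gamma|$ is homotopy equivalent to the homotopy pushout of two points over $|\Lk(u,\Gamma)|$, which is exactly the unreduced suspension of $|\Lk(u,\Gamma)|$. Since the realization of the simplicial suspension $\Sigma\Lk(u,\Gamma)$ is the unreduced suspension of $|\Lk(u,\Gamma)|$, this yields $\Gamma\simeq\Sigma\Lk(u,\Gamma)$. (Equivalently, one could collapse the contractible subcomplex $\Gamma\setminus u$: the quotient map $\Gamma\to\Gamma/(\Gamma\setminus u)$ is a homotopy equivalence, and by excision $\Gamma/(\Gamma\setminus u)=\overline{\St}(u,\Gamma)/\Lk(u,\Gamma)$ is the suspension of the link.)

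The step requiring the most care is the passage from the strict pushout of subcomplexes to a homotopy pushout, and the invariance of the homotopy pushout under replacing the two contractible legs by points; this rests on the fact that inclusions of simplicial subcomplexes are cofibrations (CW pairs) and on the gluing lemma, which I would invoke explicitly. The verification of the three identities above is routine. Finally I would note the degenerate case $\Lk(u,\Gamma)=\{\emptyset\}$, i.e.\ $u$ isolated: then contractibility of $\Gamma\setminus u$ forces $\Gamma\simeq S^0=\Sigma\Lk(u,\Gamma)$, so the conclusion remains valid.
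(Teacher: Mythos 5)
Your proof is correct, but it takes a different route from the paper's. The paper observes that contractibility of $\Gamma\setminus u$ makes the inclusion $\Lk(u,\Gamma)\hookrightarrow\Gamma\setminus u$ null-homotopic and then invokes a result of Adamaszek--Adams giving the wedge splitting $\Gamma\simeq(\Gamma\setminus u)\vee\Sigma\Lk(u,\Gamma)$, from which the claim follows by collapsing the contractible wedge summand. You instead give a self-contained argument from first principles: the decomposition $\Gamma=\overline{\St}(u,\Gamma)\cup(\Gamma\setminus u)$ with $\overline{\St}(u,\Gamma)\cap(\Gamma\setminus u)=\Lk(u,\Gamma)$ and $\overline{\St}(u,\Gamma)=u\ast\Lk(u,\Gamma)$, the fact that subcomplex inclusions are cofibrations so the strict pushout is a homotopy pushout, and the gluing lemma to replace the two contractible legs by points, identifying the result with the unreduced suspension of the link. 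The two arguments rest on the same underlying star--link--deletion decomposition (it is also what drives the cited wedge splitting), but yours has the advantage of being elementary and not requiring an external reference, and it cleanly isolates exactly where the hypothesis is used; the paper's citation buys a stronger statement (a wedge decomposition valid whenever the link includes null-homotopically, even if $\Gamma\setminus u$ is not contractible) at the cost of opacity. Your attention to the cofibration/gluing-lemma step and to the degenerate case of an isolated vertex is appropriate and correct.
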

\begin{proof}
    Note that the inclusion
$$
\Lk(u,\Gamma) \hookrightarrow \Gamma\setminus u
$$
is a null homotopy.
Then it is known~\cite{Adamaszek-Adams2022} that 
$$
  \Gamma \simeq (\Gamma\setminus u) \vee \Sigma \Lk(u,\Gamma),
$$
where~$X \vee Y$ is the wedge sum of topological spaces~$X$ and~$Y$.
This proves the lemma.
\end{proof}

\begin{lemma}\label{lemma:sigma}
  Let~$\eta$ be a simplex in~$K$ with $\eta \subset I\cap J$.
  Then,
$$
  \Lk(\eta,\Bier(K)\vert_{I\sqcup \bar{J}}) = \Lk(\eta,\Bier(K)\vert_{I\sqcup(\bar{J}\setminus\bar{\eta})}) = \Lk(\eta, K\vert_I)\ast_{\Delta} \hat{K}\vert_{\bar{J}\setminus\bar{\eta}}.
    $$
\end{lemma}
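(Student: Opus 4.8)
The plan is to prove both equalities by unwinding the definition of the link directly, using the deleted-join presentation $\Bier(K)\vert_{I\sqcup\bar{J}}=K\vert_I\ast_{\Delta}\hat{K}\vert_{\bar{J}}$ recorded earlier in this section. Since $\eta\subseteq I\cap J$ and $\eta\in K$, it lies in $K\vert_I$ and is therefore a genuine simplex of both $\Bier(K)\vert_{I\sqcup\bar{J}}$ and $\Bier(K)\vert_{I\sqcup(\bar{J}\setminus\bar{\eta})}$, so all three links are well defined. I would establish the whole chain by showing that the leftmost link and the middle link each reduce to the rightmost expression $\Lk(\eta,K\vert_I)\ast_{\Delta}\hat{K}\vert_{\bar{J}\setminus\bar{\eta}}$.

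First I would write an arbitrary simplex of $K\vert_I\ast_{\Delta}\hat{K}\vert_{\bar{J}}$ as $\alpha\sqcup\bar{\beta}$, where $\alpha\in K\vert_I$, $\bar{\beta}\in\hat{K}\vert_{\bar{J}}$, and $\alpha\cap\beta=\emptyset$ is the deleted-join condition. Because $\eta$ consists of unbarred vertices, it is automatically disjoint from the barred part $\bar{\beta}$, so the two link conditions become transparent:
$$
(\alpha\sqcup\bar{\beta})\cap\eta=\alpha\cap\eta,\qquad (\alpha\sqcup\bar{\beta})\cup\eta=(\alpha\cup\eta)\sqcup\bar{\beta}.
$$
Hence $\alpha\sqcup\bar{\beta}\in\Lk(\eta,\Bier(K)\vert_{I\sqcup\bar{J}})$ exactly when $\alpha\cap\eta=\emptyset$, $\alpha\cup\eta\in K\vert_I$, $\bar{\beta}\in\hat{K}\vert_{\bar{J}}$, and $(\alpha\cup\eta)\cap\beta=\emptyset$. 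The first two conditions say precisely that $\alpha\in\Lk(\eta,K\vert_I)$, since $\alpha\cup\eta\in K\vert_I$ already forces $\alpha\in K\vert_I$.

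The decisive step is to split the remaining condition $(\alpha\cup\eta)\cap\beta=\emptyset$ into $\alpha\cap\beta=\emptyset$ and $\eta\cap\beta=\emptyset$. The first is exactly the deleted-join condition for the pair $(\alpha,\bar{\beta})$; the second, combined with $\bar{\beta}\in\hat{K}\vert_{\bar{J}}$, is equivalent to requiring $\beta\subseteq J\setminus\eta$ together with $\bar{\beta}\in\hat{K}$, that is, $\bar{\beta}\in\hat{K}\vert_{\bar{J}\setminus\bar{\eta}}$. This identifies $\Lk(\eta,\Bier(K)\vert_{I\sqcup\bar{J}})$ with $\Lk(\eta,K\vert_I)\ast_{\Delta}\hat{K}\vert_{\bar{J}\setminus\bar{\eta}}$. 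For the middle link I would run the identical computation on $K\vert_I\ast_{\Delta}\hat{K}\vert_{\bar{J}\setminus\bar{\eta}}$; there the constraint $\beta\subseteq J\setminus\eta$ already forces $\eta\cap\beta=\emptyset$, so $(\alpha\cup\eta)\cap\beta=\alpha\cap\beta$ and the same description falls out at once. Chaining the two identifications gives the lemma.

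This argument is essentially bookkeeping, so I do not expect a genuine obstacle; the one point requiring care is the interplay between the deleted-join constraint and the link constraint. Concretely, one must recognize that the extra intersection condition $\eta\cap\beta=\emptyset$ produced by forming the link with $\eta$ is precisely what passes the barred factor from $\hat{K}\vert_{\bar{J}}$ to $\hat{K}\vert_{\bar{J}\setminus\bar{\eta}}$, while deleting $\bar{\eta}$ has no effect on the $K\vert_I$ factor because $\eta$ consists of unbarred vertices.
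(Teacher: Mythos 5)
Your proposal is correct and follows essentially the same route as the paper: both unwind the definition of the link in the deleted-join presentation $K\vert_I\ast_{\Delta}\hat{K}\vert_{\bar{J}}$ and observe that the deleted-join condition on $(\eta\sqcup\alpha)\sqcup\bar{\beta}$ forces $\eta\cap\beta=\emptyset$, which is exactly what moves the barred factor from $\hat{K}\vert_{\bar{J}}$ to $\hat{K}\vert_{\bar{J}\setminus\bar{\eta}}$. The only difference is organizational: the paper proves the first equality by a two-sided inclusion and the second by direct computation, whereas you compute both links down to the common right-hand expression.
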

\begin{proof}
    It is immediate that the first term contains the second one as a subcomplex.
    Now we assume that~$\sigma \sqcup \bar{\tau}$ is a simplex in~$\Lk(\eta,\Bier(K)\vert_{I\sqcup\bar{J}})$, where~$\sigma \in K$ and~$\tau \in \hat{K}$.
    Then~$\eta \sqcup (\sigma\sqcup\bar{\tau})$ is a simplex in~$ \Bier(K)\vert_{I\sqcup\bar{J}}$.
    Since
    $$
    \eta \sqcup (\sigma\sqcup\bar{\tau}) = (\eta \sqcup \sigma)\sqcup\bar{\tau} \in \Bier(K)_{I \sqcup \bar{J}},
    $$
    we deduce that~$\eta \cap \tau = \emptyset$, and hence~$\bar{\tau} \in \hat{K}\vert_{\bar{J} \setminus \bar{\eta}}$.
    Thus, $\eta \sqcup (\sigma \sqcup \bar{\tau})$ is a simplex in~$\Bier(K)_{I \sqcup (\bar{J} \setminus \bar{\eta})}$.
    In conclusion, $\sigma \sqcup \bar{\tau}$ is a simplex in~$\Lk(\eta,\Bier(K)\vert_{I\sqcup\bar{J}\setminus\bar{\eta}})$, which shows that the first equality holds.
    
    Moreover, the following computation verifies the second equality:
    \begin{align*}
      \Lk(\eta,\Bier(K)\vert_{I\sqcup(\bar{J}\setminus\bar{\eta})}) & = \{\sigma \sqcup \bar{\tau} \in \Bier(K)\vert_{I\sqcup(\bar{J}\setminus\bar{\eta})} \colon \eta \sqcup (\sigma \sqcup \bar{\tau}) \in \Bier(K)\vert_{I\sqcup(\bar{J}\setminus\bar{\eta})}\} \\
      & = \{\sigma \sqcup \bar{\tau} \in K_I \ast_{\Delta} \hat{K}_{\bar{J}\setminus \bar{\eta}} \colon (\eta \sqcup \sigma) \in K_I, \bar{\tau} \in \hat{K}_{\bar{J}\setminus \bar{\eta}}, (\eta \sqcup \sigma) \cap \tau = \emptyset\} \\
      &=\{\sigma \in K\vert_I \colon \eta \sqcup \sigma \in K\vert_I\} \ast_{\Delta} \hat{K}_{\bar{J}\setminus \bar{\eta}}.
    \end{align*}
\end{proof}

We now consider a full subcomplex~$\Bier(K)\vert_{I \sqcup \bar{J}}$ in the case where~$J \subsetneq I$.
Note that, for each simplex~$\sigma \in K\vert_I$ of~$\Bier(K)\vert_{I \sqcup \bar{J}}$,
\begin{equation}\label{eq:Bier_delte}
  \big( \Bier(K)\vert_{I \sqcup \bar{J}}\big) \setminus \sigma =  \Bier(K)\vert_{(I\setminus \sigma)\sqcup \bar{J}}.
\end{equation}

\begin{theorem}~\label{thm:4_main}
  Let $K$ be a simplicial complex on $[m] = \{1,\ldots,m\}$ and $J\subsetneq I \subset [m]$.
  If $\eta \subset J$ is a simplex in~$K$, then
  $$
  \Bier(K)\vert_{I\sqcup\bar{J}} \simeq \Sigma^{\left\vert \eta \right\vert}\Lk(\eta,\Bier(K)\vert_{I\sqcup (\bar{J}\setminus\bar{\eta})}).
  $$
  Consequently, if $J\in K$, then
  $$
  \Bier(K)\vert_{I\sqcup\bar{J}} \simeq \Sigma^{\left\vert J \right\vert}\Lk(J, K\vert_{I}).
  $$
\end{theorem}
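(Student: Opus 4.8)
The plan is to induct on $|\eta|$. When $\eta=\emptyset$ the statement is the tautology $\Bier(K)\vert_{I\sqcup\bar{J}}\simeq\Sigma^{0}\Lk(\emptyset,\Bier(K)\vert_{I\sqcup\bar{J}})$, so it suffices to establish a single-step reduction: writing $\eta=\eta'\sqcup\{u\}$ with $|\eta'|=|\eta|-1$, I would prove
$$
\Lk(\eta',\Bier(K)\vert_{I\sqcup(\bar{J}\setminus\bar{\eta'})})\simeq\Sigma\,\Lk(\eta,\Bier(K)\vert_{I\sqcup(\bar{J}\setminus\bar{\eta})}).
$$
Granting this, the inductive hypothesis for $\eta'$ (valid since $\eta'\subset\eta\subset J\subsetneq I$ and $\eta'\in K$) gives $\Bier(K)\vert_{I\sqcup\bar{J}}\simeq\Sigma^{|\eta'|}\Lk(\eta',\Bier(K)\vert_{I\sqcup(\bar{J}\setminus\bar{\eta'})})$; applying the suspension $\Sigma^{|\eta'|}$ to the single-step reduction (suspension preserves homotopy equivalence) and composing yields the desired $\Sigma^{|\eta|}\Lk(\eta,\Bier(K)\vert_{I\sqcup(\bar{J}\setminus\bar{\eta})})$. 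The last assertion is the case $\eta=J$: then $\bar{J}\setminus\bar{J}=\emptyset$, so $\Bier(K)\vert_{I\sqcup\emptyset}=K\vert_{I}$, Lemma~\ref{lemma:sigma} identifies the link as $\Lk(J,K\vert_I)$, and $|\eta|=|J|$.

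To prove the single-step reduction I would apply Lemma~\ref{lemma:delete} to $\Gamma:=\Lk(\eta',\Bier(K)\vert_{I\sqcup(\bar{J}\setminus\bar{\eta'})})$ at the vertex $u$. By Lemma~\ref{lem:lk-lk}, $\Lk(u,\Gamma)=\Lk(\eta,\Bier(K)\vert_{I\sqcup(\bar{J}\setminus\bar{\eta'})})$, and since $u$ and $\bar{u}$ are never both contained in a face of $\Bier(K)$ while $\bar{J}\setminus\bar{\eta'}$ exceeds $\bar{J}\setminus\bar{\eta}$ only by $\bar{u}$, the link over $\eta$ (which contains $u$) does not see $\bar{u}$; hence $\Lk(u,\Gamma)=\Lk(\eta,\Bier(K)\vert_{I\sqcup(\bar{J}\setminus\bar{\eta})})$, exactly the inner complex on the right-hand side. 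Thus Lemma~\ref{lemma:delete} delivers the reduction as soon as $\Gamma\setminus u$ is shown to be contractible.

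The crux is this contractibility. Since $u\notin\eta'$, deletion of $u$ commutes with the link over $\eta'$, so by \eqref{eq:Bier_delte} together with Lemma~\ref{lemma:sigma},
$$
\Gamma\setminus u=\Lk(\eta',\Bier(K)\vert_{(I\setminus u)\sqcup(\bar{J}\setminus\bar{\eta'})})=\Lk(\eta',K\vert_{I\setminus u})\ast_{\Delta}\hat{K}\vert_{\bar{J}\setminus\bar{\eta'}}.
$$
Here I would invoke the elementary Alexander-dual/link identity $\widehat{\Lk(\eta',K)}=\hat{K}\vert_{\overline{[m]\setminus\eta'}}$, which follows by unwinding definitions: for $\rho\subset[m]\setminus\eta'$ one has $\bar{\rho}\in\widehat{\Lk(\eta',K)}$ iff $[m]\setminus\rho\notin K$ iff $\bar{\rho}\in\hat{K}$. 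Restricting to $\overline{J\setminus\eta'}$ gives $\widehat{\Lk(\eta',K)}\vert_{\bar{J}\setminus\bar{\eta'}}=\hat{K}\vert_{\bar{J}\setminus\bar{\eta'}}$, and using $(I\setminus u)\setminus\eta'=I\setminus\eta$ this exhibits $\Gamma\setminus u$ as the full subcomplex $\Bier(\Lk(\eta',K))\vert_{(I\setminus\eta)\sqcup(\bar{J}\setminus\bar{\eta'})}$ of the Bier sphere of the link complex $\Lk(\eta',K)$ (which is not the full power set, as $K\neq 2^{[m]}$). The two index sets are incomparable: $u\in(J\setminus\eta')\setminus(I\setminus\eta)$ blocks one inclusion, and any $w\in I\setminus J$ (nonempty since $J\subsetneq I$) lies in $(I\setminus\eta)\setminus(J\setminus\eta')$ and blocks the other. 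Hence Theorem~\ref{thm:contrac}, applied to $\Lk(\eta',K)$, shows $\Gamma\setminus u$ is contractible, and the single-step reduction follows.

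I expect the main obstacle to be precisely this last identification: recognizing the deleted join $\Lk(\eta',K\vert_{I\setminus u})\ast_{\Delta}\hat{K}\vert_{\bar{J}\setminus\bar{\eta'}}$ produced by Lemma~\ref{lemma:sigma} as a genuine full subcomplex of $\Bier(\Lk(\eta',K))$, so that the already-established contractibility result of Theorem~\ref{thm:contrac} can be reused rather than reproving a nerve argument for this link. This rests on the interplay between links and Alexander duality, and the delicate part is the ground-set bookkeeping (distinguishing $\bar{J}\setminus\bar{\eta'}$ from $\bar{J}\setminus\bar{\eta}$ and tracking which vertices become ghosts); once that identity is in place, everything else is a formal consequence of Lemmas~\ref{lem:lk-lk}, \ref{lemma:delete}, and \ref{lemma:sigma} and the homotopy invariance of suspension.
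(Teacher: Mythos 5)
Your proof is correct, and it follows the same overall strategy as the paper's --- peel off the vertices of $\eta$ one at a time, using Lemma~\ref{lemma:delete} to trade a contractible vertex--deletion for a suspension of a link, with Theorem~\ref{thm:contrac} supplying the contractibility and Lemmas~\ref{lem:lk-lk} and~\ref{lemma:sigma} handling the identifications --- but the inductive step is organized differently in a way worth noting. The paper never leaves $\Bier(K)$: it first establishes $\Bier(K)\vert_{I\sqcup\bar{J}}\simeq\Sigma\Lk(v,\Bier(K)\vert_{I\sqcup\bar{J}})$, then uses Proposition~\ref{prop:Sigmalink} to commute deletions past suspensions and links, so that every contractibility check reduces to Theorem~\ref{thm:contrac} applied to a full subcomplex of $\Bier(K)$ itself (namely $\Bier(K)\vert_{(I\setminus v')\sqcup\bar{J}}$). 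You instead induct on $\left\vert\eta\right\vert$ and apply Lemma~\ref{lemma:delete} directly to the link complex $\Gamma=\Lk(\eta',\Bier(K)\vert_{I\sqcup(\bar{J}\setminus\bar{\eta'})})$, which forces you to prove contractibility of $\Gamma\setminus u$; your mechanism for this is the identity $\widehat{\Lk(\eta',K)}=\hat{K}\vert_{\overline{[m]\setminus\eta'}}$, which exhibits $\Gamma\setminus u$ as the full subcomplex $\Bier(\Lk(\eta',K))\vert_{(I\setminus\eta)\sqcup\overline{(J\setminus\eta')}}$ with incomparable index sets, so that Theorem~\ref{thm:contrac} applies to the smaller complex $\Lk(\eta',K)$. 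That identity does not appear in the paper, but it is correct (for $\rho\subset[m]\setminus\eta'$ one has $([m]\setminus\eta')\setminus\rho\in\Lk(\eta',K)$ iff $[m]\setminus\rho\in K$), and your bookkeeping is sound: $\eta'\subset(I\setminus u)\cap J$ so Lemma~\ref{lemma:sigma} applies, $u$ and any $w\in I\setminus J$ witness incomparability, and $\Lk(\eta',K)\neq 2^{[m]\setminus\eta'}$ since $K\neq 2^{[m]}$. What your route buys is that Proposition~\ref{prop:Sigmalink} becomes unnecessary and the induction is cleaner to state; what the paper's route buys is that it needs no facts about Alexander duals of links and stays entirely inside the one ambient Bier sphere. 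Either way the final assertion follows by taking $\eta=J$ exactly as you say.
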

\begin{proof}
    If~$J$ is empty, the statement follows immediately.
    Thus, we assume that~$J$ is non-empty.
    
    Let $v \in \eta$.
  We note that~$I \setminus v$ and~$J$ are both nonempty and are not comparable under inclusion.
  From Theorem~\ref{thm:contrac} with~\eqref{eq:Bier_delte},
  $$
  \big(\Bier(K)\vert_{I \sqcup \bar{J}}\big) \setminus v = \Bier(K)\vert_{(I\setminus v)\sqcup \bar{J}}
  $$
  is contractible.
  By Lemma~\ref{lemma:delete}, we observe that
  \begin{equation}\label{eq:mainproof}
   \Bier(K)\vert_{I\sqcup \bar{J}} \simeq \Sigma\Lk(v, \Bier(K)\vert_{I\sqcup \bar{J}}). 
  \end{equation}
  
  Now, suppose that~$v' \in \eta \setminus v$.
  By replacing~$I$ with~$I \setminus \{v'\}$ in~\eqref{eq:mainproof}, we obtain that
  \begin{align*}
    \Bier(K)\vert_{(I \setminus v') \sqcup \bar{J}} & \simeq \Sigma\Lk(v, \Bier(K)\vert_{(I \setminus v')\sqcup \bar{J}}) \\
     & = (\Sigma\Lk(v,\Bier(K)\vert_{I\sqcup\bar{J}}))\setminus v' \quad \text{(by Proposition~\ref{prop:Sigmalink}~\eqref{prop:Sigmalink2})},  \end{align*}
  they are contractible by Theorem~\ref{thm:contrac}.
  Applying Lemma~\ref{lemma:delete} by putting~$\Gamma = \Sigma\Lk(v,\Bier(K)\vert_{I\sqcup\bar{J}})$ and~$u = v'$, we obtain
  \begin{equation}\label{eq:mainproof2}
     \Sigma\Lk(v,\Bier(K)\vert_{I\sqcup\bar{J}}) \simeq \Sigma\Lk(v',\Sigma\Lk(v,\Bier(K)\vert_{I\sqcup\bar{J}})).
  \end{equation}
  We conclude that
  \begin{align*}
    \Bier(K)\vert_{I\sqcup \bar{J}} & \simeq \Sigma\Lk(v',\Sigma\Lk(v,\Bier(K)\vert_{I\sqcup\bar{J}})) & \mbox{ (by~\eqref{eq:mainproof} and~\eqref{eq:mainproof2})}\\
     & \simeq \Sigma^2 \Lk(v',\Lk(v,\Bier(K)\vert_{I\sqcup\bar{J}})) & \text{(by Proposition~\ref{prop:Sigmalink}~\eqref{prop:Sigmalink1})} \\
     & \simeq \Sigma^2 \Lk(\{v,v'\},\Bier(K)\vert_{I\sqcup\bar{J}}) & \mbox{ (by Lemma~\ref{lem:lk-lk})}
  \end{align*}
  By repeating this argument, we obtain
  $$
  \Bier(K)\vert_{I\sqcup\bar{J}} \simeq \Sigma^{\left\vert \eta \right\vert} \Lk(\eta,\Bier(K)\vert_{I\sqcup \bar{J}}).
  $$
  This completes the proof, using the first equality of Lemma~\ref{lemma:sigma}.
\end{proof}

\begin{corollary}~\label{cor:case3_contractible}
  Let $K$ be a simplicial complex on $[m]$ and $J\subsetneq I \subset [m]$.
  If $J\not\in K$, then the full subcomplex~$\Bier(K)\vert_{I\sqcup \bar{J}}$ is contractible.
\end{corollary}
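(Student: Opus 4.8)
The plan is to choose a maximal face of $K$ contained in $J$, reduce via Theorem~\ref{thm:4_main} to a link, and then recognize that link as a full subcomplex of the Bier sphere of a \emph{link complex} whose two index sets turn out to be disjoint — at which point Lemma~\ref{lem:simplex} finishes the argument. Concretely, I would let $\eta$ be a maximal face of $K\vert_J$. Since $J\notin K$, necessarily $\eta\subsetneq J$, so $J\setminus\eta\neq\emptyset$; moreover $\eta\subset J$ is a simplex of $K$, so Theorem~\ref{thm:4_main} applies and gives
$$
\Bier(K)\vert_{I\sqcup\bar{J}}\simeq\Sigma^{|\eta|}\Lk(\eta,\Bier(K)\vert_{I\sqcup(\bar{J}\setminus\bar{\eta})}),
$$
while Lemma~\ref{lemma:sigma} identifies this link with the deleted join $\Lk(\eta,K\vert_I)\ast_{\Delta}\hat{K}\vert_{\bar{J}\setminus\bar{\eta}}$. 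Because suspension preserves contractibility, it suffices to show that this deleted join is contractible.

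The key step is to recognize the deleted join as a full subcomplex of a single Bier sphere. Writing $K'=\Lk(\eta,K)$, a complex on $[m]\setminus\eta$, I would record two identities. First, $\Lk(\eta,K\vert_I)=K'\vert_{I\setminus\eta}$, which is immediate from the definitions since $\eta\subset I$. Second, the Alexander-dual-of-a-link identity $\widehat{K'}=\hat{K}\vert_{[\bar{m}]\setminus\bar{\eta}}$, which follows from the computation that, for $\tau\subset[m]\setminus\eta$, the set $([m]\setminus\eta)\setminus\tau$ lies in $\Lk(\eta,K)$ precisely when $[m]\setminus\tau\in K$. Restricting the second identity to $\bar{J}\setminus\bar{\eta}$ gives $\hat{K}\vert_{\bar{J}\setminus\bar{\eta}}=\widehat{K'}\vert_{\bar{J}\setminus\bar{\eta}}$, and therefore
$$
\Lk(\eta,K\vert_I)\ast_{\Delta}\hat{K}\vert_{\bar{J}\setminus\bar{\eta}}=\Bier(K')\vert_{(I\setminus\eta)\sqcup(\bar{J}\setminus\bar{\eta})}.
$$

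Maximality of $\eta$ then decouples the index sets. For every $w\in J\setminus\eta$ the face $\eta\cup\{w\}\subset J$ strictly contains $\eta$, so $\eta\cup\{w\}\notin K$ and hence $w$ is a ghost vertex of $K'$. Consequently every face of $K'\vert_{I\setminus\eta}$ avoids $J\setminus\eta$, so $K'\vert_{I\setminus\eta}=K'\vert_{I\setminus J}$, and the full subcomplex above equals $\Bier(K')\vert_{(I\setminus J)\sqcup\overline{(J\setminus\eta)}}$. Here $I\setminus J$ and $J\setminus\eta$ are nonempty (using $J\subsetneq I$ and $\eta\subsetneq J$, respectively) and disjoint, so Lemma~\ref{lem:simplex} shows this full subcomplex is contractible; tracing back through the suspension yields that $\Bier(K)\vert_{I\sqcup\bar{J}}$ is contractible.

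I expect the main obstacle to be the identification of the link with the Bier sphere of $\Lk(\eta,K)$ — in particular, establishing the Alexander-dual-of-a-link identity correctly and verifying that the index sets genuinely become disjoint after invoking maximality of $\eta$. Once the problem is reshaped into this disjoint-index form, the contractibility is delivered directly by Lemma~\ref{lem:simplex}, so essentially all of the content lies in that reshaping rather than in any further topological input.
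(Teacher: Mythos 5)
Your proposal is correct and follows essentially the same route as the paper: choose a facet $\eta$ of $K\vert_J$, reduce via Theorem~\ref{thm:4_main} and Lemma~\ref{lemma:sigma} to the deleted join $\Lk(\eta,K\vert_I)\ast_{\Delta}\hat{K}\vert_{\bar{J}\setminus\bar{\eta}}$, use maximality of $\eta$ to remove $J\setminus\eta$ from the first factor, and finish with Lemma~\ref{lem:simplex}. The only difference is cosmetic: you repackage the final object as a full subcomplex of $\Bier(\Lk(\eta,K))$ via the (correct) identity $\widehat{\Lk(\eta,K)}=\hat{K}\vert_{[\bar m]\setminus\bar\eta}$ so as to quote the contractibility clause of Lemma~\ref{lem:simplex} directly, whereas the paper applies that lemma's simplex dichotomy to $K$ itself and notes that a link inside a simplex is again a simplex.
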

\begin{proof}
  Assume that $\eta$ is a facet of $K\vert_{J}$.
  Since~$K\vert_J$ is a full subcomplex of~$K\vert_I$, $\eta$ is also a facet of~$K\vert_I$.
  For each~$v \in J\setminus\eta$, it follows that~$v \cup \eta \notin K\vert_I$, and hence~$v \notin \Lk(\sigma,K\vert_I)$.
  Consequently,
  \begin{equation}\label{eq:proof_in_corr}
    \Lk(\eta, K\vert_I) = \Lk(\eta, K\vert_I) \setminus (J \setminus \eta).
  \end{equation}

  Now, we compute
  \begin{align*}
    \Lk(\eta,\Bier(K)\vert_{I\sqcup(\bar{J}\setminus\bar{\eta})}) & = \Lk(\eta, K\vert_I)\ast_{\Delta} \hat{K}\vert_{\bar{J}\setminus\bar{\eta}} && \text{(by the second equality of Lemma~\ref{lemma:sigma})}\\
     & =\big(\Lk(\eta, K\vert_I) \setminus (J \setminus \eta) \big) \ast_{\Delta} \hat{K}\vert_{\bar{J}\setminus\bar{\eta}} && \text{(by \eqref{eq:proof_in_corr})} \\
     &= \Lk(\eta, K\vert_{I\setminus(J\setminus \eta)}) \ast_{\Delta}\hat{K}_{\bar{J}\setminus\bar{\eta}} && \text{(by \eqref{eq:Bier_delte})} \\
     &= \Lk(\eta, K\vert_{I\setminus(J\setminus \eta)}) \ast \hat{K}_{\bar{J}\setminus\bar{\eta}}. && \text{(by \eqref{eq:Bierjoin})}
  \end{align*}
  Since $I\setminus(J\setminus\eta)$ and $J\setminus\eta$ are disjoint, Lemma~\ref{lem:simplex} implies that either $K\vert_{I\setminus(J\setminus \eta)}$ or $\hat{K}\vert_{\bar{J}\setminus\bar{\eta}}$ is a simplex.
  If $K\vert_{I\setminus(J\setminus \eta)}$ is a simplex, then $\Lk(\eta, K\vert_{I\setminus(J\setminus \eta)})$ is also a simplex.
  Thus, either $\Lk(\eta, K\vert_{I\setminus(J\setminus \eta)})$ or $\hat{K}_{\bar{J}\setminus\bar{\eta}}$ is contractible.
  In either cases, $\Lk(\eta,\Bier(K)\vert_{I\sqcup(\bar{J}\setminus\bar{\eta})})$ is always contractible.
  
  By Theorem~\ref{thm:4_main}, we have
  $$
  \Bier(K)\vert_{I\sqcup\bar{J}} \simeq \Sigma^{\left\vert \eta \right\vert} \Lk(\eta,\Bier(K)\vert_{I\sqcup(\bar{J}\setminus\bar{\eta})}),
  $$
  and thus, it is also contractible, as desired.
\end{proof}

\begin{example}
  Recall the simplicial complex $K$ from Example~\ref{ex:3agt}.
  Let
  $$
  I=\{1,2,4\}, J_1=\{1,2\}, \text{ and } J_2=\{1,4\}.
  $$
  Note that~$K\vert_I$ has exactly two facets, namely~$\{1,2\}$ and~$\{4\}$.
  The link $\Lk(\{1,2\}, K\vert_I) = \{\emptyset\}$. 
  Hence, by Theorem~\ref{thm:4_main}, 
  $$
  \Bier(K)\vert_{I\sqcup \bar{J}_1}\simeq\Sigma^2\{\emptyset\} \cong S^1.
  $$
  
  Now consider the full subcomplex~$\Bier_{I \sqcup \bar{J}_2}$.
  We observe that~$\Lk(\{1\}, \Bier(K)\vert_{I\sqcup \{\bar{4}\}}) = \{2\}$.
  By Theorem~\ref{thm:4_main}, we obtain 
  $$
  \Bier(K)\vert_{I\sqcup\bar{J}_2}\simeq \Sigma \{2\} \cong D^1,
  $$
  where~$D^1$ denotes a closed interval.
  In fact, by Corollary~\ref{cor:case3_contractible}, $\Bier(K)\vert_{I\sqcup \bar{J}_2}$ is contractible.
  See Figure~\ref{figure:case3}.

  \begin{figure}
  \begin{tikzpicture}
        \node[left] at (6.6,1) {\large{$\Bier(K)$}};
        \filldraw[gray!20] (7.5, 0.5) -- (6.25, -1) -- (8, -1) -- cycle;
    \filldraw[gray!20] (7.5, 0.5) -- (8, -1) -- (8.75, -0.5) -- cycle;
    \filldraw[gray!20] (7.5, -2) -- (6.25, -1) -- (8, -1) -- cycle;
    \filldraw[gray!20] (7.5, -2) -- (8, -1) -- (8.5, -1.5) -- cycle;
    \filldraw[gray!20] (8.75, -0.5) -- (8, -1) -- (8.5, -1.5) -- cycle;
    \filldraw[gray!20] (7.5, 0.5) -- (6.25, -1) -- (7, -0.5) -- cycle;
    \filldraw[gray!20] (7.5, 0.5) -- (8.75, -0.5) -- (7, -0.5) -- cycle;
    \filldraw[gray!20] (7.5, -2) -- (7, -0.5) -- (6.25, -1) -- cycle;
    \filldraw[gray!20] (7.5, -2) -- (7, -0.5) -- (8.75, -0.5) -- cycle;
        \fill (7, -0.5) circle (1.5pt);
        \fill (6.25, -1) circle (1.5pt);
        \fill (8, -1) circle (1.5pt);
        \fill (8.75, -0.5) circle (1.5pt);
        \fill (7.5, 0.5) circle (1.5pt);
        \fill (7.5, -2) circle (1.5pt);
        \fill (8.5, -1.5) circle (1.5pt);
        \draw (6.5, 0) circle (1.5pt);

        \node at (7.5, 0.8) {1};
        \node at (7.2,-0.3) {2};
        \node at (6,-1) {3};
        \node at (8.7, -1.6) {4};
        \node at (7.5,-2.3) {$\bar{1}$};
        \node at (8.1, -0.7) {$\bar{2}$};
        \node at (8.9,-0.3) {$\bar{3}$};
        \node at (6.5, 0.3) {$\bar{4}$};

        \draw[dotted] (7.5, 0.5) -- (7, -0.5);
        \draw (7.5, 0.5) -- (6.25, -1);
        \draw (7.5, 0.5) -- (8,-1);
        \draw (7.5, 0.5) -- (8.75, -0.5);
        
        \draw[dotted] (7, -0.5) -- (6.25, -1);
        \draw (6.25, -1) -- (8, -1);
        \draw (8, -1) -- (8.75, -0.5);
        \draw[dotted] (8.75, -0.5) -- (7, -0.5);
        
        \draw[dotted] (7.5, -2) -- (7, -0.5);
        \draw (7.5, -2) -- (6.25, -1);
        \draw (7.5, -2) -- (8,-1);
        \draw[dotted] (7.5, -2) -- (8.75, -0.5);
        
        \draw (8.5, -1.5) -- (8.75, -0.5);
        \draw (8.5, -1.5) -- (8, -1);
        \draw (8.5, -1.5) -- (7.5, -2);

        \fill[fill=gray!50] (12.5,-2)--(13.5,0)--(14.5,-2);
        \node[left] at (12,1) {\large{$\Bier(K)\vert_{I \sqcup \bar{J}_1}$}};
        \foreach \x/\y/\num in {11.5/0/1, 12.5/-2/3, 10.5/-2/2} {
            \fill (\x, \y) circle (1.5pt);
        }
        \fill (13.5,0) circle (1.5pt);
        \fill (14.5,-2) circle (1.5pt);
        
        \node at (11.5,0.3) {1};
        \node at (10.5,-2.3) {2};
        \node at (12.5,-2.3) {$\bar{1}$};
        \node at (13.5,0.3) {$\bar{2}$};
        \node[below] at (14.5,-2) {$4$};
        
        \draw (11.5,0) -- (10.5,-2);
        \draw (12.5,-2) -- (10.5,-2);
        
        \draw (11.5,0)--(13.5,0);
        \draw (12.5,-2)--(13.5,0);
        \draw (13.5,0)--(14.5,-2);
        \draw (12.5,-2)--(14.5,-2);

        \node[left] at (17.6,1) {\large{$\Bier(K)\vert_{I \sqcup \bar{J}_2}$}};
        \foreach \x/\y/\num in {17/0/1, 17.5/-2/3, 16/-2/2} {
            \fill (\x, \y) circle (1.5pt);
        }
        \fill (18.5,-2) circle (1.5pt);
        \draw (15.5,0) circle (1.5pt);
        
        \node at (17,0.3) {1};
        \node at (16,-2.3) {2};
        \node at (17.5,-2.3) {$\bar{1}$};
        \node[below] at (18.5,-2) {$4$};
        \node[above] at (15.5,0) {$\bar{4}$};
        
        \draw (17,0) -- (16,-2);
        \draw (16,-2) -- (17.5,-2);
        
        \draw (17.5,-2)--(18.5,-2);
    \end{tikzpicture}
    \caption{Full subcomplexes $\Bier(K)_{I \sqcup \bar{J}}$}
    \label{figure:case3}
    \end{figure}
\end{example}

\section{Bigraded Betti numbers of Bier spheres}~\label{sec5}
Let $\Gamma$ be a simplicial complex on $[m] = \{1,\ldots,m\}$.
Denote by $\Q[v_1,\ldots,v_m]$ the graded polynomial ring in $v_1,\ldots, v_m$ over $\Q$ with $\deg v_i=2$ for each $i=1,\ldots,m$.
The \emph{Stanley-Reisner ring} (or the \emph{face ring}) $\Q(\Gamma)$ of $\Gamma$ over $\Q$ is defined to be $\Q[v_1,\ldots,v_m]/I_\Gamma$, where $I_\Gamma$ is the ideal generated by the monomials $v_{i_1}v_{i_2}\cdots v_{i_r}$ such that $\{i_1,i_2,\ldots,i_r\}\notin \Gamma$.
The Stanley-Reisner ring has been extensively studied in~~\cite{Reisner1976,Hochster1977,Stanley1996book}.

A \emph{finite free resolution} $[F:\phi]$ of $\Q(\Gamma)$ is an exact sequence
$$
0 \xrightarrow{     } F^{-h} \xrightarrow{\phi_h} F^{-h+1} \xrightarrow{\phi_{h-1}} \cdots \xrightarrow{\phi_2} F^{-1} \xrightarrow{\phi_1} F^0 \xrightarrow{\phi_0} \Q(\Gamma) \xrightarrow{} 0,
$$
where $F^{-i}$ is a finite free $\Q[v_1,\ldots,v_m]$-module and each~$\phi_i$ is degree-preserving.
If we take $F^0 = \Q[v_1,\ldots,v_m]$ as a $\Q[v_1,\ldots,v_m]$-module and $F^{-i}$ to be a $\Q[v_1,\ldots,v_m]$-module generated by a minimal basis of $\Ker(\phi_{i-1})$, then we get a \emph{minimal resolution} of $\Q(\Gamma)$.
Since $\Q(\Gamma)$ is graded, so are all $F^{-i}$, that is,
$$
F^{-i} = \bigoplus\limits_{j} F^{-i, 2j},
$$
where $F^{-i,2j}$ is the $2j$-th graded component of $F^{-i}$.
It is known~\cite{Buchstaber-Panov2002} that the \emph{Tor-algebra}
$$
\Tor_{\Q[v_1,\ldots,v_m]}^{-i,2j}(\Q(\Gamma),\Q)
$$
of $\Gamma$ with $(-i,2j)$-degree is isomorphic to $F^{-i,2j}$. 
We denote
$$
\beta^{-i, 2j}(\Gamma) = \dim_\Q F^{-i, 2j}
$$
and call it the \emph{bigraded Betti number} (or \emph{graded Betti number}) of $\Gamma$ in bidegree~$(-i, 2j)$.
Note that the bigraded Betti numbers do not depend on the choice of minimal resolution.

\begin{example}~\label{ex:minimal_resolution}
  Let $\Gamma$ be a simplicial complex on~$\{1,2,3,4\}$ defined by
  $$
  \Gamma = \{\emptyset, \{1\}, \{2\}, \{3\}, \{4\}, \{1,2\}, \{2,3\}, \{3,4\}, \{1,4\}\},
  $$
  which is the boundary of a square.
  The Stanley-Reisner ring $\Q(\Gamma)$ of $\Gamma$ is
  $$
  \frac{\Q[v_1, v_2, v_3, v_4]}{(v_1v_3, v_2v_4)},
  $$
  where $\deg v_i=2$ for $i=1,2,3,4$.
  For convenience, let~$R = \Q[v_1, v_2, v_3, v_4]$. 
  A minimal resolution is given by
  $$
  0\rightarrow R \langle w \rangle \xrightarrow{\phi_{2}}  R \langle u_{13}, u_{24} \rangle \xrightarrow{\phi_{1}} R \twoheadrightarrow \Q(\Gamma) \rightarrow 0,
  $$
  where the $R$-module homomorphisms are defined by
  \begin{align*}
    \phi_1 :  & u_{13} \mapsto v_1v_3, u_{24} \mapsto v_2v_4, \text{ and} \\
    \phi_2 : & w \mapsto v_2v_4 u_{13} - v_1v_3 u_{24}.
  \end{align*}
  We observe that~$\deg u_{13} = \deg u_{24} = 4$ and $\deg w = 8$.
  Thus, the bigraded Betti numbers of~$\Gamma$ are
  $$
  \beta^{-i,2j}(\Gamma)=
\begin{cases}
1, & (i,j)=(0,0),\\
2, & (i,j)=(1,2),\\
1, & (i,j)=(2,4),\\
0, & \text{otherwise}.
\end{cases}
  $$
\end{example}

As demonstrated in Example~\ref{ex:minimal_resolution}, the computation of a minimal resolution for obtaining the bigraded Betti numbers is highly nontrivial.
When one more vertex is added to $\Gamma$, even a ghost one, the length of the minimal resolution increases.
Adding a ghost vertex to~$\Gamma$ leaves all nonzero bigraded Betti numbers unchanged, though some bigraded Betti numbers that were zero may become nonzero.
For instance, let us consider the simplicial complex~$\Gamma'$ obtained from $\Gamma$ in Example~\ref{ex:minimal_resolution} by adding two ghost vertices.
A computation of a minimal resolution of~$\Gamma'$ yields exactly six additional nonzero bigraded Betti numbers as follows
\begin{equation}\label{eq:Gamma'}
\begin{aligned}
\beta^{0,0}(\Gamma')&=1, \qquad&\beta^{-1,2}(\Gamma') = 2,\\ \beta^{-1,4}(\Gamma')&= 2, \qquad &\beta^{-2,4}(\Gamma')=1,\\ \beta^{-2,8}(\Gamma')& = 1, \qquad &\beta^{-2,6}(\Gamma')=4,\\ & & \beta^{-3,8}(\Gamma')=2,\\ && \beta^{-3,10}(\Gamma') = 2, &\mbox{ and}\\ &&\beta^{-4,12}(\Gamma') = 1.
\end{aligned}
\end{equation}

To compute the bigraded Betti numbers, a fundamental tool is Hochster's formula~\cite{Hochster1977}, given by
$$
\beta^{-i,2j}(\Gamma;\Q) = \sum_{\substack{I \subset V(\Gamma)\\ \left\vert I \right\vert = j}} \widetilde{\beta}^{j-i-1}(\Gamma\vert_I;\Q),
$$
where $V(\Gamma)$ denotes the vertex set of $\Gamma$.
Therefore, computing the bigraded Betti numbers of $\Gamma$ via Hochster's formula requires a complete understanding of all full subcomplexes of $\Gamma$.
With the homotopy types of all full subcomplexes of Bier spheres at hand, we now describe the bigraded Betti numbers of a Bier sphere via Hochster's formula.

For a simplicial complex~$K$ on~$[m]$, we define collections~$\cF^+_k$ and~$\cF^-_k$ by
\begin{align*}
  &\cF^+_k := \{\sigma\subseteq [m] : \left\vert \sigma \right\vert = \frac{k}{2}, \sigma\in K, \bar{\sigma} \in \hat{K} \}, \mbox{ and }\\
  &\cF^-_k := \{\tau\subseteq [m] : \left\vert \tau \right\vert = \frac{k}{2}, \tau\not\in K, \bar{\tau}\not\in \hat{K}\}. 
  \end{align*}

\begin{theorem}~\label{thm:bigraded}
  Let $K$ be a simplicial complex on $[m] = \{1,\ldots,m\}$.
  The bigraded Betti number of the Bier sphere~$\Bier(K)$ in bidegree~$(-i, 2j)$ is given by
  \begin{align*}
                              \delta_{2i,j} \left\vert \cF^+_j\right\vert + \delta_{2i-2,j}\left\vert \cF^-_j \right\vert
                              &
                              + \sum_{\substack{J\subsetneq I \subseteq [m] \\ \left\vert I \right\vert + \left\vert J \right\vert = j \\ J \in K}} \widetilde{\beta}^{\left\vert I \right\vert-i-1}(\Lk(J, K\vert_I))
                             &
                              + \sum_{\substack{I\subsetneq J \subseteq [m] \\ \left\vert I \right\vert + \left\vert J \right\vert = j \\ \bar{I} \in \hat{K}}} \widetilde{\beta}^{\left\vert J \right\vert-i-1}(\Lk(\bar{I}, \hat{K}\vert_{\bar{J}}))
                              ,
  \end{align*}
  where $\delta_{i,j}$ is Kronecker delta.
\end{theorem}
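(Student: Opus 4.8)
The plan is to invoke Hochster's formula and feed it the homotopy types supplied by Theorem~\ref{thm:main}. Since the vertex set of $\Bier(K)$ is $[m]\sqcup[\bar{m}]$, every subset of cardinality $j$ is uniquely of the form $I\sqcup\bar{J}$ with $I,J\subseteq[m]$ and $|I|+|J|=j$, and the associated full subcomplex is $\Bier(K)\vert_{I\sqcup\bar{J}}$. Thus Hochster's formula becomes
$$
\beta^{-i,2j}(\Bier(K)) = \sum_{\substack{I,J\subseteq[m]\\ |I|+|J|=j}} \widetilde{\beta}^{\,j-i-1}\big(\Bier(K)\vert_{I\sqcup\bar{J}}\big),
$$
so it remains to evaluate each summand through Theorem~\ref{thm:main} and collect like terms.

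First I would partition the index set according to the five cases of Theorem~\ref{thm:main}. The key simplification is that case~\eqref{thm:main5} produces a contractible complex, whose reduced cohomology vanishes identically; hence every pair $(I,J)$ with $I,J$ incomparable, or with $J\subsetneq I$ and $J\notin K$, or with $I\subsetneq J$ and $\bar{I}\notin\hat{K}$, or with $I=J$ but exactly one of $I\in K$, $\bar{I}\in\hat{K}$ holding, contributes nothing. Only the four families corresponding to cases~\eqref{thm:main1}--\eqref{thm:main4} survive, and these families are pairwise disjoint, so the sum splits into four independent subsums.

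Next I would evaluate each surviving subsum. For the diagonal pairs $I=J$ (so $|I|=j/2$), case~\eqref{thm:main1} gives a $(j/2-1)$-sphere, whose reduced Betti number in degree $j-i-1$ is $1$ exactly when $j-i-1=j/2-1$, i.e.\ $2i=j$; since there are $|\cF^+_j|$ such subsets, this yields $\delta_{2i,j}|\cF^+_j|$. Likewise case~\eqref{thm:main2} gives a $(j/2-2)$-sphere, contributing when $j-i-1=j/2-2$, i.e.\ $2i-2=j$, and there are $|\cF^-_j|$ such subsets, producing $\delta_{2i-2,j}|\cF^-_j|$. For the off-diagonal pairs I would apply the suspension isomorphism $\widetilde{H}^{\,n}(\Sigma^k X)\cong\widetilde{H}^{\,n-k}(X)$. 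In case~\eqref{thm:main3}, where $J\subsetneq I$ and $J\in K$, this gives $\widetilde{\beta}^{\,j-i-1}\big(\Sigma^{|J|}\Lk(J,K\vert_I)\big)=\widetilde{\beta}^{\,|I|-i-1}\big(\Lk(J,K\vert_I)\big)$, since $j-i-1-|J|=|I|-i-1$; summing over all such pairs reproduces the second term of the asserted formula. Case~\eqref{thm:main4} is handled symmetrically, where $j-i-1-|I|=|J|-i-1$ yields the third term.

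The argument is essentially disciplined bookkeeping once Theorem~\ref{thm:main} is in hand, so the main point requiring care is the boundary behaviour under the convention that the $(-1)$-sphere is the empty set and $\widetilde{H}^{-1}(\emptyset)\cong\Q$. In particular the pair $I=J=\emptyset$ lies in case~\eqref{thm:main1} and must reproduce the normalization $\beta^{0,0}(\Bier(K))=1$; this holds because $\cF^+_0=\{\emptyset\}$. Checking that the suspension isomorphism remains valid at these degenerate degrees, and that no contribution is thereby double-counted between the four subsums, is the one step I would verify explicitly before concluding that summing the four subsums gives precisely the stated formula.
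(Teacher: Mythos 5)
Your proposal is correct and follows essentially the same route as the paper: apply Hochster's formula to the full subcomplexes $\Bier(K)\vert_{I\sqcup\bar{J}}$, discard the contractible cases, read off the sphere contributions as Kronecker deltas, and use the suspension isomorphism to convert the remaining terms into reduced Betti numbers of links. The degree bookkeeping ($j-i-1-|J| = |I|-i-1$, etc.) matches the paper's computation exactly.
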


\begin{proof}
Let~$I$ and~$J$ be subsets of~$[m]$ such that~$\left\vert I \right\vert + \left\vert J \right\vert = j$.

We first consider the case when $I=J$.
In this case, we have~$\left\vert I \right\vert = j/2$.
By~\eqref{thm:main1} and~\eqref{thm:main2} in Theorem~\ref{thm:main}, we observe that
      \begin{align}\label{eq:bithm1}
      \widetilde{\beta}^{j-i-1} \big(\Bier(K)\vert_{I\sqcup\bar{I}}\big)&=
      \begin{cases}
        \widetilde{\beta}^{j-i-1}(S^{\left\vert I \right\vert - 1}) =\delta_{j,2i}, & \text{if }  I \in \cF^+_{j} , \\[4pt]
        \widetilde{\beta}^{j-i-1}(S^{\left\vert I \right\vert - 2}) =\delta_{j,2i-2}, & \text{if }  I \in \cF^-_{j}.
      \end{cases}
       \end{align}
       
  Next, we consider the case where $I$ and $J$ are properly comparable under inclusion.
  Without loss of generality, we assume that~$J \subsetneq I$ and~$J \in K$.
  Then
  \begin{equation}\label{eq:bithm2}
    \begin{aligned}
    \widetilde{\beta}^{j-i-1} \big(\Bier(K)\vert_{I\sqcup\bar{J}}\big)&= \widetilde{\beta}^{j-i-1} \big(\Sigma^{\left\vert J\right\vert}\Lk(J, K\vert_{I})\big) & \text{(By \eqref{thm:main3} in Theorem~\ref{thm:main})} \\
     & = \widetilde{\beta}^{j-i-1-\left\vert J\right\vert} \big(\Lk(J, K\vert_{I})\big) &\\
     & = \widetilde{\beta}^{\left\vert I\right\vert-i-1} \big(\Lk(J, K\vert_{I})\big) & \text{(as $\left\vert I \right\vert + \left\vert J \right\vert = j$)}.
  \end{aligned}
  \end{equation} 
    Similarly, by \eqref{thm:main4} in Theorem~\ref{thm:main}, if~$I \subsetneq J$ and~$\bar{I} \in \hat{K}$, we obtain
     \begin{equation}\label{eq:bithm3}
      \widetilde{\beta}^{j-i-1} \big(\Bier(K)\vert_{I\sqcup\bar{J}}\big)= \widetilde{\beta}^{\left\vert J\right\vert-i-1} \big(\Lk(\bar{I}, \hat{K}\vert_{\bar{J}})\big).
     \end{equation}
  
  From Hochster's formula, we obtain
  $$
  \beta^{-i,2j}(\Bier(K)) = \sum_{\substack{I,J\subseteq[m]\\ \left\vert I \right\vert + \left\vert J \right\vert = j}}\widetilde{\beta}^{j-i-1}(\Bier(K)\vert_{I\sqcup\bar{J}}).
  $$
  Applying Theorem~\ref{thm:main} together with \eqref{eq:bithm1},  
\eqref{eq:bithm2}, and \eqref{eq:bithm3} completes the proof.
\end{proof}

Using Theorem~\ref{thm:bigraded}, the bigraded Betti numbers of Bier spheres are obtained without the computation of minimal resolution.

\begin{example} \label{Example:interval_with_ghost}
  Let $K$ be a simplicial complex on $\{1,2,3\}$ in which $3$ is a ghost vertex, and defined by
  $$
    K = \{\emptyset, \{1\}, \{2\}, \{1,2\}\}.
  $$
  Then $K$ is self-dual, and the Bier sphere $\Bier(K)$ of $K$ is the boundary of square with two ghost vertices.
  Note that~$\Bier(K)$ is (simplicial) isomorphic to~$\Gamma'$ in~\eqref{eq:Gamma'}.

One can check that
\[
\begin{aligned}
&\begin{minipage}{0.42\textwidth}
\[
\delta_{2i,j}\,|\cF^+_j|=
\begin{cases}
1, & (i,j) = (0,0),\\
2, & (i,j) = (1,2),\\
1, & (i,j) = (2,4),\\
0, & \text{otherwise},
\end{cases}
\]
\end{minipage}
\quad\mbox{and}\quad
\begin{minipage}{0.42\textwidth}
\[
\delta_{2i-2,j}\,|\cF^-_j|=
\begin{cases}
1, & (i,j) = (2,2),\\
2, & (i,j) = (3,4),\\
1, & (i,j) = (4,6),\\
0, & \text{otherwise}.
\end{cases}
\]
\end{minipage}
\end{aligned}
\]

\begin{table}
    \centering
    \renewcommand{\arraystretch}{1.3}
    \setlength{\tabcolsep}{5pt}
    \begin{tabular}{|c|c|c|c|c|}
        \hline
        $j$ & $I$ & $J$ & $\Lk(J, K\vert_{I})$ & $\widetilde{\beta}^{k}$ \\
        \hline
    
  & \begin{tabular}{@{}c@{}} $\{1\}$\end{tabular}
     & \begin{tabular}{@{}c@{}} $\emptyset$\end{tabular}
    & \begin{tikzpicture}[scale=0.5]
        \fill (0,0) circle (3pt);
      \end{tikzpicture}
    & $\widetilde{\beta}^k (\Lk(J, K\vert_{I})) = 0$\\
         \cline{2-5}
          \parbox[c][4.6\baselineskip][c]{1.6em}{\centering 1}

   & \begin{tabular}{@{}c@{}} $\{2\}$\end{tabular}
     & \begin{tabular}{@{}c@{}} $\emptyset$\end{tabular}
    & \begin{tikzpicture}[scale=0.5]
        \fill (0,0) circle (3pt);
      \end{tikzpicture}
    & $\widetilde{\beta}^k (\Lk(J, K\vert_{I})) = 0$\\
 \cline{2-5}

   & \begin{tabular}{@{}c@{}} $\{3\}$\end{tabular}
     & \begin{tabular}{@{}c@{}} $\emptyset$\end{tabular}
    & \begin{tikzpicture}[scale=0.5]
        \draw (0,0) circle (3pt);
      \end{tikzpicture}
    & $
        \widetilde{\beta}^{k}(\Lk(J, K\vert_{I}))=
        \begin{cases}
            1 & \mbox{if } k=-1\\
            0 & \mbox{otherwise}
        \end{cases}
      $\\
\hline
   & \begin{tabular}{@{}c@{}} $\{1, 2\}$\end{tabular}
     & \begin{tabular}{@{}c@{}} $\emptyset$\end{tabular}
    & \begin{tikzpicture}[scale=0.5]
        \fill (-1,0) circle (3pt);
        \fill (1,0) circle (3pt);
        \draw (-1,0)--(1,0);
      \end{tikzpicture}
    & $\widetilde{\beta}^k (\Lk(J, K\vert_{I})) = 0$
 \\
 \cline{2-5}
 \parbox[c][4.6\baselineskip][c]{1.6em}{\centering 2}
 & \begin{tabular}{@{}c@{}} $\{1, 3\}$\\ $\{2, 3\}$\end{tabular}
    & \begin{tabular}{@{}c@{}} $\emptyset$\end{tabular}
    & \begin{tikzpicture}[scale=0.5]
        \fill (0,0) circle (3pt);
      \end{tikzpicture}
    & $\widetilde{\beta}^k (\Lk(J, K\vert_{I})) = 0$
 \\
 \hline
    & \begin{tabular}{@{}c@{}} $\{1, 2\}$\\ \\\end{tabular}
    & \begin{tabular}{@{}c@{}} $\{1\}$ \\ $\{2\}$\end{tabular}
    & \begin{tikzpicture}[scale=0.5]
        \fill (0,0) circle (3pt);
      \end{tikzpicture}
    & $\widetilde{\beta}^k (\Lk(J, K\vert_{I})) = 0$ \\
    \cline{2-5} 
    \parbox[c][4.6\baselineskip][c]{1.6em}{\centering 3}
    & \begin{tabular}{@{}c@{}}$\{1, 3\}$\\$\{2, 3\}$\end{tabular}
    & \begin{tabular}{@{}c@{}}$\{1\}$\\$\{2\}$\end{tabular}
    & \begin{tikzpicture}[scale=0.5]
        \draw (0,0) circle (3pt);
      \end{tikzpicture}
    & $
        \widetilde{\beta}^{k}(\Lk(J, K\vert_{I}))=
        \begin{cases}
            1 & \mbox{if } k=-1\\
            0 & \mbox{otherwise}
        \end{cases}
      $
    \\
    \cline{2-5}
 &
 \begin{tabular}{@{}c@{}}$\{1, 3\}$\\$\{2, 3\}$\end{tabular}
 &
 $\{3\}$
 &
 $J\not\in K\vert_I$
 & $\times$
 \\
\hline
$4$ &
\begin{tabular}[c]{@{}l@{}}
$\{1,2,3\}$
\end{tabular}
&
\begin{tabular}[c]{@{}l@{}}
$\{1\}$\\
$\{2\}$
\end{tabular}
&
\begin{tabular}[c]{@{}c@{}}
\begin{tikzpicture}[scale=0.5]
  \fill (0,0)circle (3pt);
\end{tikzpicture}
\end{tabular}
&
$\widetilde{\beta}^k (\Lk(J, K\vert_{I})) = 0$ \\
\cline{3-5}
&
&
$\{3\}$
&
$J\not\in K\vert_I$
&
$\times$\\
\hline
$5$ &
\begin{tabular}[c]{@{}l@{}}
$\{1,2,3\}$
\end{tabular}
&
\begin{tabular}[c]{@{}l@{}}
$\{1,2\}$
\end{tabular}
&
\begin{tabular}[c]{@{}c@{}}
\begin{tikzpicture}[scale=0.5]
  \draw (0,0)circle (3pt);
\end{tikzpicture}
\end{tabular}
&
$
    \widetilde{\beta}^{k}(\Lk(J, K\vert_{I}))=\begin{cases}
        1 & \mbox{if \;} k=-1\\
        0 & \mbox{otherwise}
    \end{cases}
$
\\
\cline{3-5}
&
&
\begin{tabular}[c]{@{}l@{}}
$\{1,3\}$ \\ $\{2,3\}$
\end{tabular}
&
$J\not\in K\vert_{I}$
&
$\times$\\
\hline
\end{tabular}
\caption{The links of $K$ and their reduced Betti numbers}
\label{table:links}
\end{table}
From Table~\ref{table:links}, we also obtain the reduced Betti number of the links needed for the third and fourth terms in Theorem~\ref{thm:bigraded}.
Therefore, we provide the Betti table of $\beta^{-i,2j}(\Bier(K))$, see Table~\ref{table:betti_table}.

\begin{table}[h]
\centering
\scalebox{1.2}{
\begin{tabular}{c| ccccccc}
\toprule
$i\backslash j$ & 0 & 1 & 2 & 3 & 4 & 5 & 6 \\
\midrule
0 & 1 & $\cdot$ & $\cdot$ & $\cdot$ & $\cdot$ & $\cdot$ & $\cdot$\\
1 & $\cdot$ & 2 & 2 & $\cdot$ & $\cdot$ & $\cdot$ & $\cdot$\\
2 & $\cdot$ & $\cdot$ & 1 & 4 & 1 & $\cdot$ & $\cdot$\\
3 & $\cdot$ & $\cdot$ & $\cdot$ & $\cdot$ & 2& 2&$\cdot$\\
4 & $\cdot$ & $\cdot$ & $\cdot$ & $\cdot$ & $\cdot$ & $\cdot$ &1\\ 
\bottomrule
\end{tabular}
}
\caption{Betti table of $\beta^{-i,2j}(\Bier(K))$}
\label{table:betti_table}
\end{table}
These may now be directly compared with the formula in~\eqref{eq:Gamma'}.
\end{example}

\begin{example}[Skeletons of simplices] \label{Example:skeletons_of_simplex}
Let $m \geq 3$ be a positive integer and let $r$ be an integer with $-1\leq r \leq \lfloor \frac{m-1}{2} \rfloor -1$.
For the $(m-1)$-simplex~$\Delta^{m-1}$ on $[m]$, the \red{$r$}-skeleton~$\Delta^{m-1}_{\red{r}}$ of~$\Delta^{m-1}$ is defined as
$$
    \Delta^{m-1}_{\red{r}} = \{\sigma\subset[m] \colon \left\vert \sigma \right\vert \leq \red{r}+1\},
$$
and its Alexander dual coincides with $\Delta^{m-1}_{m-\red{r}-3}$, where the $(-1)$-skeleton is the empty set.
  
  Let $I,J$ be subsets of $[m]$ and $j=\left\vert I \right\vert + \left\vert J \right\vert$.
  Since $-1\leq r \leq \lfloor \frac{m-1}{2} \rfloor -1$, one observes that $I\in\Delta^{m-1}_{r}$ implies that $I\in\Delta^{m-1}_{m-r-3}$; conversely, $I\notin\Delta^{m-1}_{m-r-3}$ implies that $I\notin\Delta^{m-1}_{r}$.
  Therefore, $\left\vert \cF^+_j\right\vert$ and $\left\vert \cF^-_j\right\vert$ for $\Bier(\Delta^{m-1}_r)$ are given by
  \[
\begin{aligned}
&\begin{minipage}{0.42\textwidth}
\[
|\cF^+_j|=
\begin{cases}
\binom{m}{j/2}, & \mbox{if } j/2\leq r+1\\
0, & \mbox{otherwise},
\end{cases}
\]
\end{minipage}
\quad\mbox{and}\quad
\begin{minipage}{0.42\textwidth}
\[
|\cF^-_j|=
\begin{cases}
\binom{m}{j/2}, & \mbox{if } j/2\geq m-r-2\\
0, & \text{otherwise}.
\end{cases}
\]
\end{minipage}
\end{aligned}
\]
  
  Let us consider the case $J\subsetneq I$.
  By Theorem~\ref{thm:bigraded}, it suffices to compute the reduced Betti numbers of $\Lk(J,\Delta^{m-1}_{r}\vert_I)$.
  Since $J\in \Delta^{m-1}_{r}\vert_I$, we have $\left\vert J \right\vert \leq r+1$.
  When $I \in \Delta^{m-1}_{r}$, $\Delta^{m-1}_{r}\vert_I$ is the $(\left\vert I \right\vert-1)$-simplex and hence contractible; therefore, it suffices to consider the nontrivial case $\left\vert I \right\vert > r+1$.
  In this case, $\Delta^{m-1}_{r}\vert_I$ coincides with $\Delta^{\left\vert I \right\vert-1}_{r}$, and consequently $\Lk(J, \Delta^{m-1}_{r}\vert_I) = \Delta^{\left\vert I \right\vert-\left\vert J \right\vert-1}_{r-\left\vert J \right\vert}.$
  It is well-known~\cite{Hatcher2002} that the $n$-skeleton of the $k$-simplex has the homotopy type of a wedge sum of $\binom{k}{n+1}$ $n$-spheres for integers $n$ and $k$. 
  Applying this fact, 
  \begin{equation}~\label{eq:homotopy_of_skeleton}
  \Lk(J,\Delta^{m-1}_{r}\vert_I) \simeq \bigvee_{\binom{\left\vert I \right\vert-\left\vert J \right\vert-1}{r-\left\vert J \right\vert+1}} S^{r-\left\vert J \right\vert}.
  \end{equation}
  Therefore, 
  $$
  \widetilde{\beta}^{\left\vert I \right\vert - i - 1}(\Lk(J,\Delta^{m-1}_{r}\vert_I ))=
  \begin{cases}
    \binom{\left\vert I \right\vert-\left\vert J \right\vert-1}{r-\left\vert J \right\vert+1}, & \mbox{if } i=j-r-1, \\
    0, & \mbox{otherwise}.
  \end{cases}
  $$
  
  Similarly, when $I\subsetneq J$, we have $\left\vert I \right\vert \leq m-r-2<\left\vert J \right\vert$, and hence $\Lk(\bar{I}, \Delta^{m-1}_{m-r-3}\vert_{\bar{J}}) = \Delta^{\left\vert J \right\vert-\left\vert I \right\vert-1}_{m-r-3-\left\vert I \right\vert}$.
  Applying the same homotopy type computation as~\eqref{eq:homotopy_of_skeleton}, we obtain
  $$
    \widetilde{\beta}^{\left\vert J \right\vert-i-1}(\Lk(\bar{I},\Delta^{m-1}_{m-r-3}\vert_{\bar{J}}))=
  \begin{cases}
    \binom{\left\vert J \right\vert-\left\vert I \right\vert-1}{m-r-3-\left\vert J \right\vert+1}, & \mbox{if } i=j-m+r+2, \\
    0, & \mbox{otherwise}.
  \end{cases}
  $$  
  
  Using Theorem~\ref{thm:bigraded}, we obtain the following description of the bigraded Betti numbers of $\Bier(\Delta^{m-1}_{r})$.
In particular, when $m=2k+3$ and $r=k$ for an integer $-1\leq k$, $\Delta^{m-1}_{r}$ is self-dual, and thus the cases~\eqref{thm:main3} and~\eqref{thm:main4} coincide.
Hence, in this situation, we obtain the following.
\begin{align*}
  \beta^{-i,2j}\!\left(\Bier(\Delta^{2k+2}_{k})\right)
  = 
  \begin{cases}
    \displaystyle\binom{m}{i}, 
      &\text{if } i \le k+1,\; 2i = j, \\[10pt]
    \displaystyle\binom{m}{i-1},
    &\text{if } i \ge k+2, \; 2i=j+2, \\[10pt]
    2 \displaystyle
      \sum_{\substack{J \subsetneq I \subset [2k+3] \\[1pt]
        |J| \le k+1 < |I|}}
      \binom{|I| - |J| - 1}{\,k - |J| + 1\,},
      & \text{if } i = j - k - 1, \\[25pt]
    0, 
      & \text{otherwise}.
  \end{cases}
\end{align*}
On the other hand, for general $m$ and $-1\leq r \leq \lfloor \frac{m-1}{2} \rfloor -1$, we obtain
\begin{align*}
  \beta^{-i,2j}\!\left(\Bier(\Delta^{m-1}_{r})\right)
  =
  \begin{cases}
    \displaystyle\binom{m}{i}, 
      &\text{if } i \le r+1,\; 2i = j \\[10pt]
    \displaystyle\binom{m}{i-1},&\mbox{if } i \ge m-r,\; 2i = j+2, \\[10pt]
    \displaystyle
    \sum_{\substack{J \subsetneq I \subset [m] \\[1pt]
      |J| \le r+1 < |I|}}
      \binom{|I| - |J| - 1}{\,r - |J| + 1\,},
      & \text{if } i = j - r - 1, \\[15pt]
    \displaystyle
    \sum_{\substack{I \subsetneq J \subset [m] \\[1pt]
      |I| \le m-r-2 < |J|}}
      \binom{|J| - |I| - 1}{\,m - r - 2 - |I|\,},
      & \text{if } i = j - m + r + 2, \\[25pt]
    0,
      & \text{otherwise}.
  \end{cases}
\end{align*}

It should be noted that the computation above recovers the result of Heudtlass and Katth\"an in~\cite[Proposition~$4.6$]{Heudtlass-Katthan2012}.

\end{example}

\section{Real toric manifolds associated with Bier spheres}\label{sec:topo}
Unless otherwise stated, throughout this section, we assume that $K \neq 2^{[m]}$ is a simplicial complex on~$[m] = \{1,2,\ldots,m\}$.
We focus on the real toric manifold~$\realbier$ associated with the Bier sphere~$\Bier(K)$ of~$K$ and the canonical characteristic matrix~$\Lambda_m$, defined in~\eqref{matrix}.

We provide an explicit description of the integral cohomology group of $\realbier$ in terms of the faces of $K$. 
For each $0 \leq k \leq \lfloor \frac{m}{2} \rfloor$, we consider the two types of collections of subsets of $[m]$ that are either contained in both $K$ and $\hat{K}$, or in neither of them, and define these two collections as follows 
\begin{align*}  
    \cI_{2k}(K) &:= \{ I \subset [m] \colon |I|=2k, I \in K, \text{ and } \bar{I} \in \hat{K} \}, \text{ and}\\    
    \cI_{2k-1}(K) &:= \{ I \subset [m] \colon |I|=2k, I \not\in K, \text{ and } \bar{I} \not\in \hat{K} \}.  
\end{align*}  
From Theorem~\ref{sec3:thm}, it follows that, for each~$0 \leq i \leq m-1$,
\begin{equation}\label{sec4:eq1}
  \bigoplus_{0 \leq k \leq \lfloor \frac{m}{2} \rfloor}\bigoplus_{I \in \binom{[m]}{2k}}\widetilde{H}^{i-1}(\Bier(K)\vert_{I \sqcup \bar{I}};\Z) = \bigoplus_{I \in \cI_i(K)}\widetilde{H}^{i-1}(\Bier(K)\vert_{I \sqcup \bar{I}};\Z) = \Z^{\left\vert \cI_{i}(K) \right\vert}.
\end{equation}
Combining \eqref{sec4:eq1} together with Theorem~\ref{CaiChoi} and~\eqref{eq:row-2subset}, we have the following theorem.

\begin{theorem}\label{sec4:thm1}
    For each $0 \leq i \leq m-1$, the $i$th (rational) Betti number of~$\realbier$ is equal to~$\left\vert \cI_{i}(K) \right\vert$.
    Furthermore, $H^\ast(\realbier;\Z)$ is $p$-torsion free for all $p \geq 3$.
\end{theorem}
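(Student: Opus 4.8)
The plan is to read off both the rational Betti numbers and the odd-primary torsion directly from Theorem~\ref{CaiChoi}\,(1), which expresses the free part and the $p^k$-torsion part (for odd $p$) of $H^i(\realbier;\Z)$ in terms of the reduced cohomology of the full subcomplexes $\Bier(K)_\omega$ indexed by $\omega\in\row\Lambda_m$. The first step is to identify these subcomplexes. By \eqref{eq:row-2subset}, each $\omega\in\row\Lambda_m$ corresponds to an even subset $I_\omega\subseteq[m]$ with $\Bier(K)_\omega=\Bier(K)\vert_{I_\omega\sqcup\bar{I}_\omega}$, so the subcomplexes appearing in Theorem~\ref{CaiChoi} are exactly the $\Bier(K)\vert_{I\sqcup\bar{I}}$ ranging over even $I\subseteq[m]$.

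The second step is to compute the reduced integral cohomology of each such subcomplex using Theorem~\ref{sec3:thm}. For $|I|=2k$, the subcomplex $\Bier(K)\vert_{I\sqcup\bar{I}}$ is homotopy equivalent to $S^{2k-1}$ when $I\in K$ and $\bar{I}\in\hat{K}$, to $S^{2k-2}$ when $I\notin K$ and $\bar{I}\notin\hat{K}$, and is contractible otherwise. In every case its reduced cohomology is free abelian, concentrated in a single degree (or zero). Summing these contributions over all even $I$ and matching the cohomological degree against the sphere dimensions yields precisely the identity \eqref{sec4:eq1}, namely $\bigoplus_{\omega\in\row\Lambda_m}\widetilde{H}^{i-1}(\Bier(K)_\omega;\Z)\cong\Z^{\left\vert\cI_i(K)\right\vert}$; the bookkeeping works because a copy of $S^{2k-1}$ contributes in degree $i=2k$, matching $\cI_{2k}(K)$, while a copy of $S^{2k-2}$ contributes in degree $i=2k-1$, matching $\cI_{2k-1}(K)$.

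With \eqref{sec4:eq1} in hand, the conclusion follows from a single invocation of Theorem~\ref{CaiChoi}\,(1): the free part of $H^i(\realbier;\Z)$ agrees with the free part of $\Z^{\left\vert\cI_i(K)\right\vert}$, giving the $i$th rational Betti number as $\left\vert\cI_i(K)\right\vert$, and for any odd prime $p$ the $p^k$-torsion part of $H^i(\realbier;\Z)$ agrees with that of $\Z^{\left\vert\cI_i(K)\right\vert}$, which is torsion-free, so $H^\ast(\realbier;\Z)$ carries no $p$-torsion for $p\geq 3$. The computation is essentially mechanical once the input is assembled; the only genuine content is the observation, already secured by Theorem~\ref{sec3:thm}, that every $\Bier(K)\vert_{I\sqcup\bar{I}}$ is a sphere or a point and hence has torsion-free reduced cohomology, and this is exactly what kills the odd-primary torsion. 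I deliberately make no claim at the prime $2$: since such real toric manifolds typically do carry $2$-torsion, the statement is restricted to $p\geq 3$, and accordingly the argument uses only part~(1) of Theorem~\ref{CaiChoi} and never needs the shellability hypothesis of part~(2).
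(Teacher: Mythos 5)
Your argument is correct and is essentially identical to the paper's own proof: both identify the subcomplexes $\Bier(K)_\omega$ with $\Bier(K)\vert_{I\sqcup\bar{I}}$ for even $I$ via \eqref{eq:row-2subset}, compute their reduced cohomology as spheres or points using Theorem~\ref{sec3:thm} to obtain \eqref{sec4:eq1}, and then apply Theorem~\ref{CaiChoi}\,(1) to read off the rational Betti numbers and the absence of odd-primary torsion. The degree bookkeeping ($S^{2k-1}$ contributing to degree $2k$ and $S^{2k-2}$ to degree $2k-1$) matches the paper exactly.
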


Let~$M^\R$ be an $n$-dimensional real toric manifold associated with~$(\Gamma,\Lambda)$.
If there is a function~$\cG \colon \row \Lambda \to \{0,1,\ldots,n\}$ such that, for each~$\omega \in \row \Lambda$,
$$
\tilde{\beta}^{i-1}(\Gamma_\omega) = 0 \quad \text{for all } i \neq \cG(\omega),
$$
the (rational) homology group~$H^\ast(M^\R)$ is said to be \emph{homological concentrated} in~$\cG$, following~\cite{Choi-Yoon2026}.

Note that, from~\eqref{eq:row-2subset}, each element of the row space~$\row \Lambda_m$ is naturally identified with a subset~$I \subset [m]$ of even cardinality.
We present a function
$$
\cG_{m} \colon \bigoplus_{0 \leq k \leq \lfloor \frac{m}{2} \rfloor} \binom{[m]}{2k} \longrightarrow \{0,1,\ldots,m-2\}
$$
defined by
$$
\cG_{m}(\omega)=
\begin{cases}
i, & \text{if } I \in \cI_i(K),\\[2pt]
0, & \text{otherwise}.
\end{cases}
$$
For a simplicial complex~$K$ on~$[m]$, the homology group~$H^\ast(\realbier)$ is homological concentrated in~$\cG_{m}$.
Furthermore, $I \in \cI_i$ contributes not only to~$\beta_i(\realbier)$ but to exactly one contribution.

\begin{example}[Example~\ref{Example:skeletons_of_simplex} revisited]

    Consider the $r$-skeleton~$\Delta^{m-1}_r$ of the $(m-1)$-simplex on~$[m]$.
    It follows that
    $$
    \left\vert \cI_{2k}(\Delta^{m-1}_r) \right\vert=
    \begin{cases}
      \binom{m}{2k}, & \text{if } 2k \leq \min\{r+1,m-2-r\} \\
      0, & \text{otherwise},
    \end{cases}
    $$
    and 
    $$
    \left\vert \cI_{2k-1}(\Delta^{m-1}_r) \right\vert=
    \begin{cases}
      \binom{m}{2k}, & \text{if } 2k > \max\{r+1,m-2-r\} \\
      0, & \text{otherwise}.
    \end{cases}
    $$
    In particular, when $r = m-2$, $\Bier(\Delta^{m-1}_r)$ is the boundary of the $(m-1)$-simplex, and the corresponding real toric manifold is known to be the real projective space~$\RP^{m-1}$ of dimension~$m-1$.
    The above computation coincides with the known result that $\RP^{m-1}$ is rationally acyclic if $m-1$ is even, and a rational homology sphere if $m-1$ is odd.
\end{example}

For a simplicial complex~$\Gamma$ on a finite set~$S$, we introduce two important combinatorial invariants; the \emph{$f$-vector} and the \emph{$h$-vector} of a simplicial complex $\Gamma$ with $r$ vertices. 
The integral vector $(f_0,f_1,\ldots,f_{r-1})$ is called the $f$-vector of~$\Gamma$, denoted by~$f(\Gamma)$, where $f_i$ is the number of $i$-dimensional faces of~$\Gamma$ for each $0 \leq i \leq n-1$.
Similarly, the integral vector $(h_0,h_1,\ldots,h_{r})$ is called the $h$-vector of~$\Gamma$, denoted by~$h(\Gamma)$, and is defined by the polynomial equation
$$
h_0t^r + \cdots + h_{r-1}t + h_r = (t-1)^r + f_0(t-1)^{r-1} + \cdots + f_{r-1},
$$
where $f_{-1} = 1$.
In particular, if $\Gamma$ is a simplicial sphere, then
$$
h_i = h_{r-i} \text{ for each } 0 \leq i \leq r, \text{ and } h_0 \leq h_1 \leq \cdots \leq h_{\lfloor \frac{r}{2} \rfloor},
$$
which is part of the famous \emph{$g$-theorem}.
Refer~\cite{Stanley1980,Adiprasito2018,Karu-Xiao2023}.

\begin{corollary}\label{sec4:coro1}
    For each $0 \leq k \leq \lfloor \frac{m}{2} \rfloor$,
    $$
    h_{2k}(\Bier(K))-h_{2k-1}(\Bier(K)) = \beta^{2k} - \beta^{2k-1},
    $$
    where $\beta^{i}$ is the $i$th (rational) Betti number of $\realbier$. 
\end{corollary}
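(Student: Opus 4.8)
The plan is to reduce the statement to a purely combinatorial identity for the $h$-vector of $\Bier(K)$ in terms of the face numbers of $K$, and then to match that identity with $\left\vert\cI_{2k}(K)\right\vert-\left\vert\cI_{2k-1}(K)\right\vert$, which by Theorem~\ref{sec4:thm1} equals $\beta^{2k}-\beta^{2k-1}$. Throughout I write the $h$-polynomial in the standard form $h(\Bier(K),t)=\sum_{\eta\in\Bier(K)}t^{|\eta|}(1-t)^{m-1-|\eta|}$, using $\dim\Bier(K)+1=m-1$; this is the convention for which the Dehn--Sommerville symmetry $h_i=h_{m-1-i}$ quoted in the text holds, and outside $0\le i\le m-1$ we set $h_i=0$. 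Since $(1-t)h(\Bier(K),t)=\sum_i(h_i-h_{i-1})t^i$, the quantity $h_{2k}-h_{2k-1}$ is exactly the coefficient of $t^{2k}$ in $G(t):=(1-t)h(\Bier(K),t)=\sum_{\eta\in\Bier(K)}t^{|\eta|}(1-t)^{m-|\eta|}$.

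The key step is to prove $G(t)=f_K(t)-t^m f_K(1/t)$, where $f_K(t):=\sum_{\sigma\in K}t^{|\sigma|}$. I would write each face of $\Bier(K)$ as $\sigma\sqcup\bar\tau$ with $\sigma\in K$, $[m]\setminus\tau\notin K$, and $\sigma\cap\tau=\emptyset$, and set $A:=\sigma$ and $D:=[m]\setminus\tau$; this is a bijection between faces and pairs $A\subseteq D$ with $A\in K$ and $D\notin K$, under which $|\eta|=|A|+(m-|D|)$ and $m-|\eta|=|D|-|A|$, so that $\eta$ contributes $t^{m-|E|}(1-t)^{|E|}$ with $E:=D\setminus A$. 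Fixing $A\in K$ and summing over all $D\supseteq A$ telescopes, by $\sum_{E\subseteq[m]\setminus A}t^{(m-|A|)-|E|}(1-t)^{|E|}=1$, to $t^{|A|}$; subtracting off the terms with $D\in K$, which reorganize (via $F=A\sqcup E\in K$ and the same binomial collapse $\sum_{A\subseteq F}t^{|F|-|F\setminus A|}(1-t)^{|F\setminus A|}=1$) into $\sum_{F\in K}t^{m-|F|}$, yields $G(t)=\sum_{A\in K}t^{|A|}-\sum_{F\in K}t^{m-|F|}=f_K(t)-t^m f_K(1/t)$. I expect this generating-function computation to be the main obstacle, since one must set up the pairing $(\sigma,\tau)\leftrightarrow(A,D)$ and the two binomial collapses carefully; it is worth sanity-checking on $\Bier(\Delta^{m-1}_{m-2})=\partial\Delta^{m-1}$, where $f_K(t)=(1+t)^m-t^m$ and the formula gives $G(t)=1-t^m$.

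Extracting coefficients, $[t^{2k}]G(t)=\#\{\sigma\in K:|\sigma|=2k\}-\#\{\sigma\in K:|\sigma|=m-2k\}$. I would then match this with $\left\vert\cI_{2k}(K)\right\vert-\left\vert\cI_{2k-1}(K)\right\vert$ by classifying the $2k$-subsets $I\subseteq[m]$ according to whether $I\in K$ and whether $I^c:=[m]\setminus I\in K$: writing $p,q,r,s$ for the numbers of such $I$ with, respectively, $(I\in K,\,I^c\in K)$, $(I\in K,\,I^c\notin K)$, $(I\notin K,\,I^c\in K)$, $(I\notin K,\,I^c\notin K)$, the complementation bijection $I\mapsto I^c$ gives $\#\{\sigma\in K:|\sigma|=2k\}=p+q$ and $\#\{\sigma\in K:|\sigma|=m-2k\}=p+r$, so the difference above is $q-r$. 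Since $\bar I\in\hat K\iff I^c\notin K$, one reads off $\left\vert\cI_{2k}(K)\right\vert=q$ and $\left\vert\cI_{2k-1}(K)\right\vert=r$, whence $[t^{2k}]G(t)=q-r=\left\vert\cI_{2k}(K)\right\vert-\left\vert\cI_{2k-1}(K)\right\vert$. Combining with $\beta^{2k}=\left\vert\cI_{2k}(K)\right\vert$ and $\beta^{2k-1}=\left\vert\cI_{2k-1}(K)\right\vert$ from Theorem~\ref{sec4:thm1} gives $h_{2k}(\Bier(K))-h_{2k-1}(\Bier(K))=\beta^{2k}-\beta^{2k-1}$, as desired. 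Note that this route is entirely combinatorial and sidesteps any analysis of the $2$-torsion of $H^\ast(\realbier;\Z)$.
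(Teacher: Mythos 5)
Your proof is correct, and its second half coincides with the paper's argument: the paper likewise classifies the $2k$-subsets $I$ according to whether $I\in K$ and whether $[m]\setminus I\in K$, uses $\bar I\in\hat K\iff [m]\setminus I\notin K$ to read off $|\cI_{2k}(K)|$ and $|\cI_{2k-1}(K)|$ as your $q$ and $r$, and then applies Theorem~\ref{sec4:thm1}. The genuine difference is in the first half. The paper simply quotes the identity $h_{2k}(\Bier(K))-h_{2k-1}(\Bier(K))=f_{2k-1}(K)-f_{m-2k-1}(K)$ from Bj\"orner--Paffenholz--Sj\"ostrand--Ziegler (their Theorem~5.2), whereas you rederive it from scratch via the generating function $G(t)=(1-t)h(\Bier(K),t)=\sum_{\eta\in\Bier(K)}t^{|\eta|}(1-t)^{m-|\eta|}$ and the bijection $\sigma\sqcup\bar\tau\leftrightarrow(A,D)=(\sigma,[m]\setminus\tau)$ onto pairs $A\subseteq D$ with $A\in K$, $D\notin K$. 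I checked both binomial collapses: summing over all $D\supseteq A$ gives $t^{|A|}$, and the subtracted terms with $D=F\in K$ collapse to $t^{m-|F|}$, so $G(t)=f_K(t)-t^mf_K(1/t)$ and hence $[t^{2k}]G(t)=\#\{\sigma\in K:|\sigma|=2k\}-\#\{\sigma\in K:|\sigma|=m-2k\}$, exactly the cited formula. What your route buys is a self-contained corollary that does not lean on the BPSZ computation; what it costs is re-proving a known result. One small point in your favor: you correctly normalize the $h$-vector with $d=\dim\Bier(K)+1=m-1$, which is the convention under which both the Dehn--Sommerville relations and this identity hold, whereas the paper's own definition of the $h$-vector (indexed by the number of vertices) is stated loosely and should be read with your normalization.
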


\begin{proof}
  From~\cite[Theorem~5.2]{Bjorner-Paffenholz-Sjostrand-Ziegler2005}, we obtain that, for each $1 \leq k \leq \lfloor \frac{m}{2} \rfloor$,
  $$
  h_{2k}-h_{2k-1} = f_{2k-1}(K) - f_{m-2k-1}(K),
  $$
  where the $f$-vector of $K$ is $(f_0(K),\ldots,f_{m-1}(K))$.
  Observe that
  \begin{align*}
    f_{2k-1}(K) - f_{m-2k-1}(K) = & \left\vert \{I \subset [m] \colon \left\vert I \right\vert = 2k, I \in K\}\right\vert - \left\vert \{I \subset [m] \colon \left\vert I \right\vert = m-2k, I \in K\}\right\vert\\
    = & \left\vert \{I \subset [m] \colon \left\vert I \right\vert = 2k, I \in K\}\right\vert - \left\vert \{I \subset [m] \colon \left\vert I \right\vert = 2k, \bar{I} \notin \hat{K}\}\right\vert\\
     = &  \Bigl(\left\vert \cI_{2k}(K) \right\vert + \left\vert \{I \subset [m] \colon \left\vert I \right\vert = 2k, I \in K, \bar{I} \notin \hat{K}\}\right\vert\Bigl)\\
      & - \Bigl(\left\vert \{I \subset [m] \colon \left\vert I \right\vert = 2k, I \in K, \bar{I} \notin \hat{K}\}\right\vert + \left\vert \cI_{2k-1}(K)\right\vert\Bigl)\\
      = & \left\vert \cI_{2k}(K) \right\vert -  \left\vert \cI_{2k-1}(K)\right\vert \\
      = & \beta^{2k} - \beta^{2k-1}.
  \end{align*}
  This completes the proof.
\end{proof}

\begin{remark}
  Since $\Bier(K)$ is a simplicial sphere, by combining the $g$-theorem with Corollary~\ref{sec4:coro1}, we observe that
    $$
    \begin{cases}
      \beta^{2k-1} < \beta^{2k}, & \text{if } 0 \leq k \leq \lfloor \frac{m}{4} \rfloor, \\
      \beta^{2k-1} > \beta^{2k}, & \text{if } \lfloor \frac{m}{4} \rfloor < k \leq \lfloor \frac{m}{2} \rfloor.
    \end{cases}
    $$
However, there is no clear pattern between $\beta^{2k}$ and $\beta^{2k+1}$.
\end{remark}

Combinatorial descriptions of the multiplicative structure for the cohomology ring of real toric spaces have been studied in~\cite{Choi-Park2017_multiplicative}.
Some explicit cases such as those in~\cite{Choi-Yoon2023,Yoon2025} are successfully described. 
However, providing a fully explicit combinatorial description of the multiplicative structure of cohomology ring of general real toric spaces is quite challenging.
In the remainder of this section, we discuss the multiplicative structure of the rational cohomology ring~$H^\ast(\realbier)$.

From~\eqref{sec3:eq2}, the Bier sphere $\Bier(K\vert_I)$ is regarded as a subcomplex of~$\Bier(K)\vert_{I\sqcup \bar{I}}$.
Then, the map
\begin{equation}\label{sec5:eq1}
   \varphi \colon \bigoplus_{i=0}^{m-1} \Q \left\langle I \subset [m] \colon I \in \cI_i \right\rangle \to \bigoplus_{i=0}^{m-1}\bigoplus_{I \in \cI_{i}(K)}\widetilde{H}^{i-1}(\Bier(K)\vert_{I\sqcup\bar{I}})
\end{equation}
defined by
$$
\varphi(I) =\begin{cases}
           \bigl[\Bier(K)\vert_{I \sqcup \bar{I}}\bigl], & \text{if } I = \{i_1,\ldots,i_{2k}\} \in \cI_{2k}(K), \\
           \bigl[ \Bier(K\vert_I) \bigl], & \text{if } I \in \cI_{2k-1}(K),
         \end{cases}
$$
is a well-defined $\Q$-module homomorphism.
From Theorem~\ref{sec3:thm}, it follows that $\varphi$ is a graded $\Q$-module isomorphism.
Combining Theorem~\ref{ChoiPark} with \eqref{sec4:eq1}, there is a graded $\Q$-algebra isomorphism
\begin{equation}\label{sec5:eq2}
\bigoplus_{i=0}^{m-1}\bigoplus_{I \in \cI_{i}(K)}\widetilde{H}^{i-1}(\Bier(K)\vert_{I\sqcup\bar{I}}) \to H^\ast(\realbier).
\end{equation}
By the composition of maps in \eqref{sec5:eq1} and \eqref{sec5:eq2}, we have
$$
\Phi \colon \bigoplus_{i=0}^{m-1} \Q \left\langle I \subset [m] \colon I \in \cI_i \right\rangle \to H^\ast(\realbier). 
$$ 
Note that each~$I \in \cI_i(K)$ serves as a cohomology generator of degree~$i$.

Let $\smile$ be the cup product in the rational cohomology ring~$H^\ast(\realbier)$.
Recall the symmetric difference~$I \triangle J = (I \cup J)\setminus(I \cap J)$ of~$I$ and~$J$.
From Theorem~\ref{ChoiPark}, the multiplicative structure on
$$
\bigoplus_{i=0}^{m-1}\bigoplus_{I \in \cI_{i}(K)}\widetilde{H}^{i-1}(\Bier(K)\vert_{I\sqcup\bar{I}})
$$
is given by the canonical maps
$$
\smile \colon \widetilde{H}^{i-1}(\Bier(K)\vert_{I\sqcup\bar{I}}) \otimes \widetilde{H}^{j-1}(\Bier(K)\vert_{J\sqcup\bar{J}}) \to \widetilde{H}^{i+j-1}(\Bier(K)\vert_{{(I\triangle J)}\sqcup(\overline{I \triangle J})}),
$$
for $I \in \cI_{i}(K)$ and $J \in \cI_{j}(K)$, which are induced by simplicial maps
$$
\Bier(K)\vert_{{(I\triangle J)}\sqcup(\overline{I \triangle J})} \hookrightarrow \Bier(K)\vert_{I\sqcup\bar{I}} \ast \Bier(K)\vert_{J\sqcup\bar{J}},
$$
where $\ast$ is the simplicial join.

\begin{theorem}\label{thm:multi}
For $I \in \cI_{i}(K)$ and $J \in \cI_{j}(K)$, the following holds.
\begin{enumerate}
\item\label{oddeven} If $i$ and $j$ have different parity, then $\Phi(I) \smile \Phi(J) = 0$, whenever $I \cap J \neq \emptyset$,
\item\label{oddodd} if both $i$ and $j$ are odd, then $\Phi(I) \smile \Phi(J) = 0$, whenever $\left\vert I \cap J \right\vert \neq 1$, and
\item\label{eveneven} if both $i$ and $j$ are even,
      $$
    \Phi(I) \smile \Phi(J) =  
\begin{cases}
\Phi(I \sqcup J), \text{ up to a sign}, & \text{if } I \sqcup J \in \cI_{i+j}(K),\\
0, & \text{otherwise}.
\end{cases}  
    $$
    \end{enumerate}
\end{theorem}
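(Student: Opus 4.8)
The plan is to reduce the entire statement to the degree bookkeeping furnished by Theorem~\ref{sec3:thm} together with the join description of the product in Theorem~\ref{ChoiPark}. First I would record the numerology attached to a generator. If $I\in\cI_{2k}(K)$, then $|I|=2k$ and, by Theorem~\ref{sec3:thm}\,(1), $\Bier(K)\vert_{I\sqcup\bar{I}}$ is the boundary of the $|I|$-dimensional cross-polytope, so its only nonzero reduced cohomology lies in degree $|I|-1=i-1$; if instead $I\in\cI_{2k-1}(K)$, then again $|I|=2k$ but $\Bier(K)\vert_{I\sqcup\bar{I}}\simeq S^{|I|-2}$ by Theorem~\ref{sec3:thm}\,(2), so the nonzero reduced cohomology lies in degree $|I|-2=i-1$. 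In either case $\Phi(I)$ is a generator of $\widetilde{H}^{i-1}(\Bier(K)\vert_{I\sqcup\bar{I}})$ and, crucially, $|I|$ is even.

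Next, by Theorem~\ref{ChoiPark} the product $\Phi(I)\smile\Phi(J)$ lands in $\widetilde{H}^{i+j-1}(\Bier(K)\vert_{(I\triangle J)\sqcup\overline{I\triangle J}})$. Writing $a=|I|$ and $b=|J|$ (both even) and $|I\triangle J|=a+b-2|I\cap J|$, I would apply Theorem~\ref{sec3:thm} a second time to decide when this target is nonzero: setting $L:=I\triangle J$, the complex $\Bier(K)\vert_{L\sqcup\bar{L}}$ is an odd-dimensional sphere $S^{|L|-1}$ when $L\in\cI_{|L|}(K)$, an even-dimensional sphere $S^{|L|-2}$ when $L\in\cI_{|L|-1}(K)$, and contractible otherwise. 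Matching both the parity and the exact value of $i+j-1$ against these sphere dimensions settles the three cases simultaneously. In case~\eqref{oddeven} the degree $i+j-1$ is even, forcing the relevant sphere to be $S^{|L|-2}$ and hence $|L|=a+b$, i.e.\ $I\cap J=\emptyset$; so the target vanishes as soon as $I\cap J\neq\emptyset$. In case~\eqref{oddodd} the degree is odd, forcing $S^{|L|-1}$ and $|L|=a+b-2$, i.e.\ $|I\cap J|=1$; so the target vanishes unless $|I\cap J|=1$. In case~\eqref{eveneven} the degree is again odd, forcing $|L|=a+b$, whence $I\cap J=\emptyset$ and $L=I\sqcup J$, and moreover $L\in\cI_{i+j}(K)$; this produces the vanishing in the ``otherwise'' branch.

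It then remains to evaluate the surviving product in case~\eqref{eveneven} when $I\cap J=\emptyset$ and $I\sqcup J\in\cI_{i+j}(K)$. Here I would exploit the join description in Theorem~\ref{ChoiPark}. Since $I\in\cI_{2k}(K)$ and $J\in\cI_{2\ell}(K)$, both $\Bier(K)\vert_{I\sqcup\bar{I}}$ and $\Bier(K)\vert_{J\sqcup\bar{J}}$ are cross-polytope boundaries; as already observed in the proof of Theorem~\ref{sec3:thm}\,(1), each is an iterated join of copies of $S^0$, so by associativity of the join and disjointness of the vertex sets $I\sqcup\bar{I}$ and $J\sqcup\bar{J}$ their join is again a cross-polytope boundary on $(I\sqcup J)\sqcup\overline{I\sqcup J}$, that is, precisely $\Bier(K)\vert_{(I\sqcup J)\sqcup\overline{I\sqcup J}}$ under the hypothesis $I\sqcup J\in\cI_{i+j}(K)$. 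Consequently the structural simplicial map $\Bier(K)\vert_{(I\sqcup J)\sqcup\overline{I\sqcup J}}\to\Bier(K)\vert_{I\sqcup\bar{I}}\ast\Bier(K)\vert_{J\sqcup\bar{J}}$ is the identity, and the cohomology cross product of a join carries the product of the two top sphere classes to a generator of $\widetilde{H}^{i+j-1}(S^{i+j-1})$. Hence $\Phi(I)\smile\Phi(J)=\pm\Phi(I\sqcup J)$, the sign arising from the orientation conventions in the Künneth isomorphism for the join.

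The step I expect to be the main obstacle is this last one: confirming that the abstract simplicial map of Theorem~\ref{ChoiPark} is genuinely the identity under the cross-polytope identification, and that the associated cross product is an isomorphism onto the top class rather than merely nonzero, so that the product is exactly $\pm\Phi(I\sqcup J)$. By contrast, the vanishing assertions in~\eqref{oddeven}, \eqref{oddodd}, and the ``otherwise'' branch of~\eqref{eveneven} are purely a matter of parity and dimension counting and follow formally from Theorem~\ref{sec3:thm}.
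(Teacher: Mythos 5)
Your proposal is correct and follows essentially the same route as the paper: both arguments use Theorem~\ref{sec3:thm} to see that the target group $\widetilde{H}^{i+j-1}(\Bier(K)\vert_{(I\triangle J)\sqcup\overline{I\triangle J}})$ vanishes unless $|I\triangle J|$ has exactly the right (even) cardinality, which pins down $|I\cap J|$ in each parity case, and then both identify $\Bier(K)\vert_{(I\sqcup J)\sqcup\overline{I\sqcup J}}$ with the join $\Bier(K)\vert_{I\sqcup\bar{I}}\ast\Bier(K)\vert_{J\sqcup\bar{J}}$ of cross-polytope boundaries to evaluate the one surviving product. Your uniform parity-versus-dimension bookkeeping is just a mild repackaging of the paper's case split, and the join identification you flag as the main obstacle is exactly the computation the paper carries out explicitly.
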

\begin{proof}
    To establish the proof, we consider two distinct cases; both $i$ and $j$ are odd, or at least one of them is even.

    First, we assume that both $i$ and $j$ are odd.
    Then, we have $\left\vert I \right\vert = i+1$ and $\left\vert J \right\vert = j+1$.
    It follows that $\left\vert I \triangle J \right\vert = i+j+2$ if $I \cap J = \emptyset$, and $\left\vert I \triangle J \right\vert \leq i+j-2$ if $\left\vert I \cap J \right\vert \geq 2$.
    By Theorem~\ref{sec3:thm}, since $I \triangle J \notin \cI_{i+j}(K)$, we obtain
    $$
    \widetilde{H}^{i+j-1}(\Bier(K)\vert_{{(I\triangle J)}\sqcup(\overline{I \triangle J})}) = 0.
    $$
    Hence, the product~$\Phi(I) \smile \Phi(J)$ of~$I$ and~$J$ is trivial, which verifies that the second case in~\eqref{oddodd} holds.

    Next, we consider the case where at least one of~$i$ and~$j$ is even.
    We observe that~$\left\vert I \right\vert +\left\vert J \right\vert = i+j$ or~$i+j+1$.
    If $I \cap J \neq \emptyset$, then $\left\vert I \triangle J \right\vert \leq i+j-1$, which implies that~$I \triangle J \notin \cI_{i+j}(K)$.
    By the same argument as above, the product~$\Phi(I) \smile \Phi(J)$ vanishes in this case as well.
    Thus, the case~\eqref{oddeven} holds, and the case~\eqref{eveneven} holds when $I \cup J \neq \emptyset$.
    
    We now assume that both $i$ and $j$ are even, with $I \sqcup J = \cI_{i+j}(K)$.
    Let $I = \{i_1,\ldots,i_{2k}\}$ and $J = \{j_1,\ldots,j_{2\ell}\}$.
    Then, we obtain that
    \begin{align*}
  \Bier(K)\vert_{{(I\sqcup J)}\sqcup(\overline{I \sqcup J})} & = \bigl\{\{i_1\},\{\bar{i}_1\}\bigl\}\ast \cdots \ast \bigl\{\{i_{2k}\},\{\bar{i}_{2k}\}\bigl\} \ast \bigl\{\{j_1\},\{\bar{j}_2\}\bigl\}\ast \cdots \ast \bigl\{\{j_{2\ell}\},\{\bar{j}_{2\ell}\}\bigl\}\\
       & = \Bier(K)\vert_{I\sqcup\bar{I}} \ast \Bier(K)\vert_{J\sqcup\bar{J}}.
    \end{align*}
    Therefore, $\Phi(I) \smile \Phi(J) = \Phi(I \sqcup J)$, up to a sign.
\end{proof}

\begin{remark}\label{remark:orien}
    By fixing an orientation of~$\Bier(K)$, one can explicitly determine the sign through a detailed computation.
    For the non-trivial case of Theorem~\ref{thm:multi}, the sign is given by the permutation that distinguishes the orientations of~$\Bier(K)\vert_{I\sqcup\bar{I}} \ast \Bier(K)\vert_{J\sqcup\bar{J}}$ and~$\Bier(K)\vert_{{(I\sqcup J)}\sqcup(\overline{I \sqcup J})}$.
\end{remark}

\begin{conjecture}\label{conj}
    For each $I \in \cI_{i}(K)$ and $J \in \cI_{j}(K)$,
    \begin{enumerate}
      \item $\Phi(I) \smile \Phi(J) = \Phi(I \sqcup J)$, up to a sign, if $i$ and $j$ have different parity with $I \cap J = \emptyset$, and
      \item if both $i$ and $j$ are odd with $\left\vert I \cap J \right\vert = 1$,
          $$
          \Phi(I) \smile \Phi(J) = \begin{cases}
                                     \Phi(I \triangle J), \text{ up to a sign,} & \text{if } I \triangle J \in \cI_{i+j}(K), \\
                                     0, & \text{if } I \triangle J \notin \cI_{i+j}(K).
                                   \end{cases}
          $$
    \end{enumerate}
Also, in non-trivial cases, the sign can be computed by referring to their orientations.
\end{conjecture}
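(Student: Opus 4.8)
The plan is to compute $\Phi(I)\smile\Phi(J)$ exactly as in the proof of Theorem~\ref{thm:multi}, namely through the Choi--Park simplicial map
$$f\colon \Bier(K)\vert_{(I\triangle J)\sqcup\overline{(I\triangle J)}}\longrightarrow \Bier(K)\vert_{I\sqcup\bar I}\ast\Bier(K)\vert_{J\sqcup\bar J}$$
of Theorem~\ref{ChoiPark} and the induced pullback on top reduced cohomology. Write $A=\Bier(K)\vert_{I\sqcup\bar I}$, $B=\Bier(K)\vert_{J\sqcup\bar J}$, $\Gamma=\Bier(K)\vert_{(I\triangle J)\sqcup\overline{(I\triangle J)}}$, and set $I'=I\setminus(I\cap J)$, $J'=J\setminus(I\cap J)$, so that $I\triangle J=I'\sqcup J'$. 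The first reduction is the vanishing argument already used in Theorem~\ref{thm:multi}: by Theorem~\ref{sec3:thm}, $\widetilde H^{i+j-1}(\Gamma)\neq 0$ if and only if $I\triangle J\in\cI_{i+j}(K)$, so in every other situation the product vanishes for degree reasons. In case~(2) this disposes of the clause $I\triangle J\notin\cI_{i+j}(K)$. In case~(1) I would first note that $I\sqcup J\in\cI_{i+j}(K)$ is \emph{automatic}: if $j$ is odd then $J\notin K$ and $\bar J\notin\hat K$, and since $K$ is a complex, $I\sqcup J\in K$ would force $J\in K$; hence $I\sqcup J\notin K$ and dually $\overline{I\sqcup J}\notin\hat K$. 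Thus in case~(1) the target class is always nonzero and the content is to identify the product with the generator.

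The core is to prove that $f^{\ast}\colon\widetilde H^{i+j-1}(A\ast B)\to\widetilde H^{i+j-1}(\Gamma)$ is an isomorphism whenever $I\triangle J\in\cI_{i+j}(K)$, and I would run this through a single dimension count. In every case $\dim A=i-1$ and $\dim B=j-1$ by Theorem~\ref{sec3:thm}, so $\dim(A\ast B)=i+j-1$; hence the relative cochains in degree $i+j$ vanish, and the long exact sequence of the relevant subcomplex pair forces the restriction onto $\widetilde H^{i+j-1}$ to be surjective. Since the reduced join formula gives $\widetilde H^{i+j-1}(A\ast B)\cong\widetilde H^{i-1}(A)\otimes\widetilde H^{j-1}(B)\cong\Q$ and $\widetilde H^{i+j-1}(\Gamma)\cong\Q$, surjectivity upgrades to an isomorphism. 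Two sub-analyses feed this count. In case~(1), $I\cap J=\emptyset$, so $f$ is literally the inclusion of $\Gamma$ as a proper subcomplex of $A\ast B$ on the same vertex set, and the count applies directly. In case~(2), $I\cap J=\{p\}$, the map $f$ doubles the shared pair $p,\bar p$; here I would check that $f$ is a simplicial isomorphism of $\Gamma$ onto the full subcomplex $\Bier(K)\vert_{I'\sqcup\bar I'}\ast\Bier(K)\vert_{J'\sqcup\bar J'}$ of $A\ast B$. The nontrivial direction of this identification uses $I\triangle J=I'\sqcup J'\in K$ (and dually in $\hat K$), exactly as in the even--even computation of Theorem~\ref{thm:multi}, so that faces split across the two join factors always recombine inside $\Gamma$; the restriction to this full subcomplex is then an isomorphism by the same dimension count. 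In both cases Theorem~\ref{ChoiPark} identifies $\Phi(I)\smile\Phi(J)$ with $f^{\ast}$ applied to the join product of the two fundamental classes, which under the reduced join isomorphism is the generator, yielding $\Phi(I\triangle J)$ up to sign.

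The main obstacle is the sign. The argument above produces the product only up to a unit in $\Q^{\times}$, which is all the conjecture asserts, but upgrading it to an explicit sign is genuinely delicate: one must fix an orientation of $\Bier(K)$, transport it compatibly to the cross-polytope boundaries and to the join $A\ast B$, and then track the shuffle permutation reordering the vertices of $A\ast B$ into those of $\Gamma$ --- the Koszul-type sign flagged in Remark~\ref{remark:orien}. This sign depends on the interleaving of $I'$ and $J'$ inside $[m]$ and on the bookkeeping of barred versus unbarred coordinates, and pinning it down uniformly across the parities of $i$ and $j$ is the part I expect to require real care. A secondary point is to verify the precise form of the Choi--Park map $f$ in case~(2): one must confirm that the canonical map of Theorem~\ref{ChoiPark} is indeed the vertex injection described above, so that the factorization $\Gamma\xrightarrow{\ \cong\ }\Bier(K)\vert_{I'\sqcup\bar I'}\ast\Bier(K)\vert_{J'\sqcup\bar J'}\hookrightarrow A\ast B$ is legitimate.
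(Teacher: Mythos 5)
You should first be aware that the paper contains no proof of this statement: it is explicitly left as Conjecture~\ref{conj}, and the authors state that they only verified it by computation in small dimensions. So there is nothing to compare your argument against, and a correct proof here would go beyond the paper. Your setup is the right one (the Choi--Park map $f$, the vanishing for degree reasons when $I\triangle J\notin\cI_{i+j}(K)$, and the observation that $I\sqcup J\in\cI_{i+j}(K)$ is automatic in case~(1) are all correct), but the central step has a genuine gap.

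The gap is the claim that ``in every case $\dim A=i-1$ and $\dim B=j-1$ by Theorem~\ref{sec3:thm}.'' That theorem determines only the \emph{homotopy type} of $\Bier(K)\vert_{I\sqcup\bar I}$, not its dimension. When $i$ is odd, so that $|I|=i+1$ with $I\notin K$ and $\bar I\notin\hat K$, the complex $A=K\vert_I\ast_{\Delta}\hat K\vert_{\bar I}$ is homotopy equivalent to $S^{i-1}$ but may contain faces $\sigma\sqcup\bar\tau$ with $\sigma\sqcup\tau=I$ a partition, which have dimension $|I|-1=i$. Concretely, for $K=\partial\Delta^{\{1,2,3,4\}}\cup\bigl\{\{5\}\bigr\}$ on $[5]$ and $I=\{1,2,3,4\}$ one has $I\notin K$ and $\bar I\notin\hat K$, yet $\{1,2,3\}\sqcup\{\bar 4\}$ is a $3$-dimensional face of $\Bier(K)\vert_{I\sqcup\bar I}\simeq S^{2}$. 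Hence $A\ast B$ can have faces in dimension $i+j$ (and in dimension $i+j+1$ when both $i$ and $j$ are odd) lying outside the image of $f$, the relative group $H^{i+j}(A\ast B,\Gamma)$ need not vanish, and the long exact sequence no longer forces $f^{\ast}$ to be surjective in degree $i+j-1$. What must actually be proved is that the inclusion of the image of $f$ --- for instance the cross-polytope boundary $\Bier(K)\vert_{I'\sqcup\bar I'}\subseteq A$ in your case~(2) --- carries the generator of $\widetilde H^{i-1}(A)\cong\Q$ to the generator, i.e.\ that this embedded $(i-1)$-sphere has degree $\pm1$ inside a homotopy $(i-1)$-sphere of possibly larger dimension. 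Nothing in the proposal establishes this, and it is precisely this point (together with the sign) that the authors could not resolve; your argument correctly reduces the conjecture to it, but that is a reduction, not a proof.
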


After performing calculations in small dimensions, the authors verified Conjecture~\ref{conj}, but in general, it remains unclear whether the conjecture holds.
If confirmed, it would yield a full description of the multiplicative structure of the rational cohomology ring~$H^\ast(\realbier)$.

\begin{example}
    Consider a simplicial complex~$K$ on~$S = \{1,2,3,4,5,6,7\}$ with the following facets
    $$
    \{2, 3\}, \{1, 3, 6, 7\}, \{1, 5, 7\}, \{1, 2, 4, 5, 6\}.
    $$
    Thus, the maximal faces of the Alexander dual $\hat{K}$ of $K$ are
    $$
\{\bar{1}, \bar{2}, \bar{4}, \bar{6}, \bar{7}\}, \{\bar{1}, \bar{2}, \bar{3}, \bar{4}\}, \{\bar{1}, \bar{3}, \bar{4}, \bar{5}, \bar{6}\}, \{\bar{1}, \bar{2}, \bar{5}, \bar{6}, \bar{7}\}, \{\bar{1}, \bar{2}, \bar{3}, \bar{5}, \bar{6}\}, \{\bar{1}, \bar{4}, \bar{5}, \bar{7}\}, \{\bar{4}, \bar{5}, \bar{6}, \bar{7}\}.
$$
By a direct computation, we obtain the following collections:
    \begin{align*}
    \cI_{0}(K) =& \bigl\{\emptyset\bigl\}, \\
  \cI_{2}(K) =&  \bigl\{I \subset S \colon \left\vert I \right\vert = 2\bigl\} \setminus \bigl\{ \{3, 4\}, \{3, 5\}, \{2, 7\}, \{4, 7\}, \{3, 7\} \bigl\}, \\
  \cI_{3}(K) = &  \bigl\{\{1, 2, 3, 6\}, \{1, 2, 3, 7\}, \{1, 2, 4, 7\}, \{1, 2, 5, 7\}, \{1, 3, 4, 6\}, \{1, 3, 4, 7\}, \\
& \{1, 3, 5, 6\}, \{1, 3, 5, 7\}, \{1, 4, 5, 6\}, \{2, 3, 4, 6\}, \{2, 3, 4, 7\}, \{2, 3, 5, 7\}\bigl\},\\
  \cI_{4}(K) =&  \bigl\{\{1,2,4,6\},\{1,2,5,6\},\{1,4,5,6\}\bigl\},\\
  \cI_{5}(K) =& \bigl\{I \subset S \colon \left\vert I \right\vert = 6\bigl\},\text{ and}\\
    \cI_{i}(K) = & \ \emptyset, \text{ if either $i=1$ or $i \geq 6$}.
\end{align*}
In conclusion, by Theorem~\ref{sec4:thm1}, the (rational) Betti numbers $\beta^i$ of $\realbier$ are as follows
$$
\begin{array}{c|c|c|c|c|c|c|c}
i & 0 & 1 & 2 & 3 & 4 & 5 & \geq 6\\ \hline
\beta^i & 1 & 0 & 16 & 12 & 3 & 7 & 0
\end{array}.
$$

By Theorem~\ref{thm:multi}, one can partially compute the cup product~$\smile$ in~$H^{\ast}(\realbier)$.
For instance, we have the following examples.
\begin{enumerate}
  \item $\Phi(\{1,2,3,6\}) \smile \Phi(\{1,4\}) = 0$, and
  \item $\Phi(\{1,2\}) \smile \Phi(\{4,6\}) = \Phi(\{1,2,4,6\})$, up to a sign.
\end{enumerate}
\end{example}

\providecommand{\bysame}{\leavevmode\hbox to3em{\hrulefill}\thinspace}
\providecommand{\MR}{\relax\ifhmode\unskip\space\fi MR }
\providecommand{\MRhref}[2]{%
  \href{http://www.ams.org/mathscinet-getitem?mr=#1}{#2}
}
\providecommand{\href}[2]{#2}


\begin{thebibliography}{10}

\bibitem{Adamaszek-Adams2022}
Micha\l{} Adamaszek and Henry Adams, \emph{On {V}ietoris-{R}ips complexes of
  hypercube graphs}, J. Appl. Comput. Topol. \textbf{6} (2022), no.~2,
  177--192. \MR{4423676}

\bibitem{Adiprasito2018}
Karim Adiprasito, \emph{Combinatorial lefschetz theorems beyond positivity},
  2018.

\bibitem{BBCG2010}
A.~Bahri, M.~Bendersky, F.~R. Cohen, and S.~Gitler, \emph{The polyhedral
  product functor: a method of decomposition for moment-angle complexes,
  arrangements and related spaces}, Adv. Math. \textbf{225} (2010), no.~3,
  1634--1668. \MR{2673742}

\bibitem{Bier1992}
Thomas Bier, \emph{A remark on {A}lexander duality and the disjunct join},
  preprint (1992).

\bibitem{Bjorner-Paffenholz-Sjostrand-Ziegler2005}
Anders Bj\"orner, Andreas Paffenholz, Jonas Sj\"ostrand, and G\"unter~M.
  Ziegler, \emph{Bier spheres and posets}, Discrete Comput. Geom. \textbf{34}
  (2005), no.~1, 71--86. \MR{2140883}

\bibitem{Bosio-Meersseman2006}
Fr\'ed\'eric Bosio and Laurent Meersseman, \emph{Real quadrics in {$\bold
  C^n$}, complex manifolds and convex polytopes}, Acta Math. \textbf{197}
  (2006), no.~1, 53--127. \MR{2285318}

\bibitem{Buchstaber-Panov2002}
Victor~M. Buchstaber and Taras~E. Panov, \emph{Torus actions and their
  applications in topology and combinatorics}, University Lecture Series,
  vol.~24, American Mathematical Society, Providence, RI, 2002. \MR{1897064}

\bibitem{Buchstaber-Panov2015}
\bysame, \emph{Toric topology}, Mathematical Surveys and Monographs, vol. 204,
  American Mathematical Society, Providence, RI, 2015. \MR{3363157}

\bibitem{Cai-Choi2021}
Li~Cai and Suyoung Choi, \emph{Integral cohomology groups of real toric
  manifolds and small covers}, Mosc. Math. J. \textbf{21} (2021), no.~3,
  467--492. \MR{4277852}

\bibitem{Choi-Jang-Vallee2024}
Suyoung Choi, Hyeontae Jang, and Mathieu Vall{\'e}e, \emph{The characterization
  of {$(n-1)$}-spheres with {$n + 4$} vertices having maximal {B}uchstaber
  number}, J. Reine Angew. Math. \textbf{811} (2024), 267--292. \MR{4751123}

\bibitem{Choi-Kaji-Theriault2017}
Suyoung Choi, Shizuo Kaji, and Stephen Theriault, \emph{Homotopy decomposition
  of a suspended real toric space}, Bol. Soc. Mat. Mex. (3) \textbf{23} (2017),
  no.~1, 153--161. \MR{3633130}

\bibitem{Choi-Kim2011}
Suyoung Choi and Jang~Soo Kim, \emph{Combinatorial rigidity of 3-dimensional
  simplicial polytopes}, Int. Math. Res. Not. IMRN (2011), no.~8, 1935--1951.
  \MR{2806527}

\bibitem{Choi-Panov-Suh2010}
Suyoung Choi, Taras Panov, and Dong~Youp Suh, \emph{Toric cohomological
  rigidity of simple convex polytopes}, J. Lond. Math. Soc. (2) \textbf{82}
  (2010), no.~2, 343--360. \MR{2725043}

\bibitem{Choi-Park2015}
Suyoung Choi and Hanchul Park, \emph{A new graph invariant arises in toric
  topology}, J. Math. Soc. Japan \textbf{67} (2015), no.~2, 699--720.
  \MR{3340192}

\bibitem{Choi-Park2017_torsion}
\bysame, \emph{On the cohomology and their torsion of real toric objects},
  Forum Math. \textbf{29} (2017), no.~3, 543--553. \MR{3641664}

\bibitem{Choi-Park2017_multiplicative}
\bysame, \emph{Multiplicative structure of the cohomology ring of real toric
  spaces}, Homology Homotopy Appl. \textbf{22} (2020), no.~1, 97--115.
  \MR{4027292}

\bibitem{Choi-Yoon2023}
Suyoung Choi and Younghan Yoon, \emph{The cohomology rings of real
  permutohedral varieites},  (2023), arXiv preprint arXiv:2308.12693.

\bibitem{Choi-Yoon2026}
\bysame, \emph{Real toric manifolds associated with chordal nestohedra}, J.
  Combin. Theory Ser. A \textbf{217} (2026), Paper No. 106102, 38. \MR{4950205}

\bibitem{Choi-Yoon-Yu2024}
Suyoung Choi, Younghan Yoon, and Seonghyeon Yu, \emph{The {B}etti numbers of
  real toric varieties associated to {W}eyl chambers of types {$E_7$} and
  {$E_8$}}, Osaka J. Math. \textbf{61} (2024), no.~3, 409--417. \MR{4766372}

\bibitem{Alessio-Gunnar-Nematbakhsh2019}
Alessio D'Al\`i, Gunnar Fl\o~ystad, and Amin Nematbakhsh, \emph{Resolutions of
  co-letterplace ideals and generalizations of {B}ier spheres}, Trans. Amer.
  Math. Soc. \textbf{371} (2019), no.~12, 8733--8753. \MR{3955562}

\bibitem{Davis-Januszkiewicz1991}
Michael~W. Davis and Tadeusz Januszkiewicz, \emph{Convex polytopes, {C}oxeter
  orbifolds and torus actions}, Duke Math. J. \textbf{62} (1991), no.~2,
  417--451. \MR{1104531}

\bibitem{de_Longueville2004}
Mark de~Longueville, \emph{Bier spheres and barycentric subdivision}, J.
  Combin. Theory Ser. A \textbf{105} (2004), no.~2, 355--357. \MR{2046088}

\bibitem{Eisenbud2005}
David Eisenbud, \emph{The geometry of syzygies}, Graduate Texts in Mathematics,
  vol. 229, Springer-Verlag, New York, 2005, A second course in commutative
  algebra and algebraic geometry. \MR{2103875}

\bibitem{Han-Kwak2015}
Kangjin Han and Sijong Kwak, \emph{Sharp bounds for higher linear syzygies and
  classifications of projective varieties}, Math. Ann. \textbf{361} (2015),
  no.~1-2, 535--561. \MR{3302629}

\bibitem{Hatcher2002}
Allen Hatcher, \emph{Algebraic topology}, Cambridge University Press,
  Cambridge, 2002. \MR{1867354}

\bibitem{Heudtlass-Katthan2012}
Inga Heudtlass and Lukas Katth\"an, \emph{Algebraic properties of {B}ier
  spheres}, Matematiche (Catania) \textbf{67} (2012), no.~1, 91--101.
  \MR{2927822}

\bibitem{Hochster1977}
Melvin Hochster, \emph{Cohen-{M}acaulay rings, combinatorics, and simplicial
  complexes}, Ring theory, {II} ({P}roc. {S}econd {C}onf., {U}niv. {O}klahoma,
  {N}orman, {O}kla., 1975), Lect. Notes Pure Appl. Math., vol. Vol. 26, Dekker,
  New York-Basel, 1977, pp.~171--223. \MR{441987}

\bibitem{Ishida-Fukukawa-Masuda2013}
Hiroaki Ishida, Yukiko Fukukawa, and Mikiya Masuda, \emph{Topological toric
  manifolds}, Mosc. Math. J. \textbf{13} (2013), no.~1, 57--98, 189--190.
  \MR{3112216}

\bibitem{Jevtic-Timotijevic-Zivaljevic2021}
Filip~D. Jevti\'c, Marinko Timotijevi\'c, and Rade~T. \v{Z}ivaljevi\'c,
  \emph{Polytopal {B}ier spheres and {K}antorovich-{R}ubinstein polytopes of
  weighted cycles}, Discrete Comput. Geom. \textbf{65} (2021), no.~4,
  1275--1286. \MR{4249903}

\bibitem{Jurkiewicz1985}
Jerzy Jurkiewicz, \emph{Torus embeddings, polyhedra, {$k^\ast$}-actions and
  homology}, Dissertationes Math. (Rozprawy Mat.) \textbf{236} (1985), 64.
  \MR{820078}

\bibitem{Karu-Xiao2023}
Kalle Karu and Elizabeth Xiao, \emph{On the anisotropy theorem of {P}apadakis
  and {P}etrotou}, Algebr. Comb. \textbf{6} (2023), no.~5, 1313--1330.
  \MR{4668930}

\bibitem{leray1950}
Jean Leray, \emph{L'anneau spectral et l'anneau filtr\'e{} d'homologie d'un
  espace localement compact et d'une application continue}, vol.~29, 1950.
  \MR{37505}

\bibitem{Ivan_Sergeev2024}
I.~Yu. Limonchenko and M.~A. Sergeev, \emph{Bier spheres and toric topology},
  Proc. Steklov Inst. Math. \textbf{326} (2024), 252--268. \MR{4855361}

\bibitem{Ivan-Marinko-Rade2025}
Ivan Limonchenko, Marinko Timotijevi{\'c}, and Rade {\v{Z}}ivaljevi{\'c},
  \emph{On a class of toric manifolds arising from simplicial complexes}, arXiv
  preprint arXiv:2506.13547 (2025).

\bibitem{Ivan-Ales2024}
Ivan Limonchenko and Aleš Vavpetič, \emph{Chromatic numbers, {B}uchstaber
  numbers and chordality of {B}ier spheres}, 2024, arXiv preprint
  arXiv:2412.20861.

\bibitem{Matousek2003}
Ji\v{r}\'i Matou\v{s}ek, \emph{Using the {B}orsuk-{U}lam theorem},
  Universitext, Springer-Verlag, Berlin, 2003, Lectures on topological methods
  in combinatorics and geometry, Written in cooperation with Anders Bj\"orner
  and G\"unter M. Ziegler. \MR{1988723}

\bibitem{Murai2011}
Satoshi Murai, \emph{Spheres arising from multicomplexes}, J. Combin. Theory
  Ser. A \textbf{118} (2011), no.~8, 2167--2184. \MR{2834171}

\bibitem{Nagel-Pitteloud1994}
Uwe Nagel and Yves Pitteloud, \emph{On graded {B}etti numbers and geometrical
  properties of projective varieties}, Manuscripta Math. \textbf{84} (1994),
  no.~3-4, 291--314. \MR{1291122}

\bibitem{Reisner1976}
Gerald~Allen Reisner, \emph{Cohen-{M}acaulay quotients of polynomial rings},
  Adv. Math. \textbf{21} (1976), no.~1, 30--49. \MR{0407036}

\bibitem{Rotman_book1988}
Joseph~J. Rotman, \emph{An introduction to algebraic topology}, Graduate Texts
  in Mathematics, vol. 119, Springer-Verlag, New York, 1988. \MR{957919}

\bibitem{Stanley1980}
Richard~P. Stanley, \emph{The number of faces of a simplicial convex polytope},
  Adv. Math. \textbf{35} (1980), no.~3, 236--238. \MR{563925}

\bibitem{Stanley1996book}
\bysame, \emph{Combinatorics and commutative algebra}, second ed., Progress in
  Mathematics, vol.~41, Birkh\"auser Boston, Inc., Boston, MA, 1996.
  \MR{1453579}

\bibitem{Suciu2012}
Alexander~I. Suciu, \emph{The rational homology of real toric manifolds},
  Oberwolfach Reports, no.~4, 2012, pp.~2972--2976.

\bibitem{Suyama2014}
Yusuke Suyama, \emph{Examples of smooth compact toric varieties that are not
  quasitoric manifolds}, Algebr. Geom. Topol. \textbf{14} (2014), no.~5,
  3097--3106. \MR{3276857}

\bibitem{Whitehead1978}
George~W. Whitehead, \emph{Elements of homotopy theory}, Graduate Texts in
  Mathematics, vol.~61, Springer-Verlag, New York-Berlin, 1978. \MR{516508}

\bibitem{Yoon2025}
Younghan Yoon, \emph{Cohomology of type {$B$} real permutohedral varieties},
  Bull. Lond. Math. Soc. \textbf{57} (2025), no.~9, 2770--2788. \MR{4956746}

\end{thebibliography}
\end{document}